\newcommand{\Div}{\operatorname{div}}
\newcommand{\defeq}{\mathrel{\mathop:}=}
\numberwithin{equation}{section}
\newtheorem{theorem}{Theorem}[section]
\newtheorem{lemma}[theorem]{Lemma}
\newtheorem{remark}[theorem]{Remark}
\newtheorem{corollary}[theorem]{Corollary}
\newtheorem{proposition}[theorem]{Proposition}
\newtheorem{definition}[theorem]{Definition}
\def\eqref#1{(\ref{#1})}
\newcommand{\norm}[1]{\left\lVert#1\right\rVert}
\def\enddoc{\end{document}}
\def\FRAME#1#2#3#4#5#6#7#8
\begin{document}
\author{Qingsong Gu}
\address{Department of mathematics, Nanjing University, Nanjing 210093, P. R. China} \email{qingsonggu@nju.edu.cn}

\author{Xueping Huang}
\address{School of Mathematics and Statistics, Nanjing University of Information Science and Technology,
	Nanjing 210044, P. R. China}
\email{hxp@nuist.edu.cn}

\author{Yuhua Sun}
\address{School of Mathematical Sciences and LPMC, Nankai University, 300071
	Tianjin, P. R. China}
\email{sunyuhua@nankai.edu.cn}

\title{Sharp criteria for nonlocal elliptic inequalities on manifolds}
\thanks{\noindent Q. Gu  was supported by the National Natural Science Foundation of China (Grant Nos. 12101303 and 12171354). X. Huang was supported by
	the National Natural Science Foundation of China (Grant No. 11601238). Y. Sun was supported by the National Natural Science Foundation of
	China (Grant No.11501303).}
\subjclass[2020]{Primary 35R11; Secondary 58J05, 31B10, 42B37}
\keywords{nonlocal elliptic equations,  fractional Laplacian, Green function, 
	Riemannian manifold}

\begin{abstract}
	%
	Let $M$ be a complete non-compact Riemannian manifold and  $\sigma $ be a
	Radon measure on $M$, we study the existence and non-existence of
	positive solutions to a nonlocal elliptic inequality
	\begin{equation*}
		(-\Delta)^{\alpha} u\geq u^{q}\sigma\quad \text{in}\,\,M,
		\end{equation*}%
	with $q>1$. When the Green function
	$G^{(\alpha)}$ of the fractional Laplacian 	$(-\Delta)^{\alpha}$ exists and satisfies the quasi-metric property, we obtain necessary and sufficient criteria for existence of
	positive solutions. In particular,
	explicit conditions in terms of volume growth and the growth of $\sigma$ are given, when $M$ admits Li-Yau Gaussian type  heat kernel estimates.
\end{abstract}

\maketitle
\tableofcontents
\section{Introduction}

Let $\Delta$ be the Laplace-Beltrami operator on a complete connected non-compact Riemannian manifold $(M,g)$ and $\alpha\in (0,1)$ be a constant. Through spectral calculus or subordination theory, we can investigate the fractional Laplacian $(-\Delta)^{\alpha}$ (see \eqref{frac-lap} for the precise definition). We are interested in the existence of positive solutions to the following fractional differential inequality
\begin{equation}\label{eq-non-local-differential-inequality}
	(-\Delta)^{\alpha} u\geq u^{q} \sigma,\quad\mbox{on $M$},
\end{equation}%
with $ q>1$ being a constant and $\sigma$ being a Radon measure on $M$.
	
	Assume that the operator $(-\Delta)^{\alpha}$ is transient, namely there is an  associated Green kernel function $G^{(\alpha)}(\cdot , \cdot): M\times M \rightarrow (0, +\infty]$ which is 
	lower semi-continuous,  finite off the diagonal, and is formally the inverse of $(-\Delta)^{\alpha}$ (see Section \ref{sect-frac-Lap} for the precise definitions and technical details). It is more convenient to study the closely related integral inequality
	\begin{equation}\label{eq-non-local-integral-inequality}
		u(x)\geq \int_{M}G^{(\alpha)}(x,y)u^{q}(y) d \sigma (y). 
	\end{equation}

The solution of (\ref{eq-non-local-integral-inequality}) is understood in the following sense.
\begin{definition}\label{defi-int-ineq}\rm
		Let $u$ be a $\sigma$-a.e. defined function that admits a lower semi-continuous $\sigma$-version. Then $u$ is called a positive solution to (\ref{eq-non-local-integral-inequality}) if 	for $\sigma$-a.e. $x\in M$, $u(x)\in (0, +\infty)$, and (\ref{eq-non-local-integral-inequality})
		holds. 
	\end{definition}
\begin{remark}\rm
	Let $\tilde{u}$ be a lower semi-continuous $\sigma$-version of a positive solution $u$. Note that for $\sigma$-a.e. $x\in M$, $\tilde{u}(x)\in (0, +\infty)$, and (\ref{eq-non-local-integral-inequality})
	holds for $\tilde{u}$ in place of $u$. Later without further specification, we  simply work with the lower semi-continuous $\sigma$-version of a positive solution.
\end{remark}

Let $d$ be the geodesic distance and $\mu$ be the Riemannian measure on $M$.
Denote by $B(x,r)$ the geodesic ball centered at $x$ with radius $r$. The following two conditions are very important in geometric analysis.
\begin{itemize}
	\item[1.] The manifold $(M, g)$ is said to satisfy the volume doubling condition if for all $x\in M$ and $r>0$,
	\begin{equation}
		\mu (B(x,2r))\lesssim \mu (B(x,r));  \tag{VD}  \label{D}
	\end{equation}
	\item[2.] The manifold $(M, g)$ is said to satisfy the (scale invariant) Poincar\'{e} inequality if for any ball $B=B(x,r)\subset M$ and any $%
	f\in C^{1}\left( B\right) $,
	\begin{equation}
		\int_{B}|f-f_{B}|^{2}d \mu \lesssim r^{2}\,\int_{B}|\nabla f|^{2}d \mu ,
		\tag{PI}  \label{pi}
	\end{equation}%
	where $f_{B}=\frac{1}{\mu(B)}\int_{B} f d \mu$.
\end{itemize}

In the above and below, for functions $U, V$, we use $U\lesssim V$ (or $U\gtrsim V$) to stand for $U(x)\leq cV(x)$ (resp. $U(x)\geq cV(x)$) for a positive constant $c$ and for each parameter $x$, and $U\asymp V$ to mean that both $U\lesssim V$ and $V\lesssim U$  hold. We also use $c, C, c_1, c_2,\ldots$ to denote generic positive constants whose values are unimportant.

	Denote by $p_t(\cdot, \cdot)$ the heat kernel associated with 
	{$\Delta$} on $M$.
	{The classical works of Grigor'yan \cite{Gri} and Saloff-Coste \cite{SC}} show that the combination of (VD) and (PI) is  equivalent to the
	following Li-Yau Gaussian type heat kernel estimates:
	\[\frac{1}{\mu(B(x,\sqrt{t}))}e^{-\frac{d^2(x,y)}{c_2t}}\lesssim p_t(x,y)\lesssim \frac{1}{\mu(B(x,\sqrt{t}))}e^{-\frac{d^2(x,y)}{c_1t}}.\]
	Via subordination theory, this leads to estimates for the heat kernel and Green kernel of the fractional Laplacian (see Definition \ref{defi-Green-alpha} and Proposition \ref{prop-g-alph}). Consequently,  assuming (VD) and (PI),  $	(-\Delta)^{\alpha}$ is transient if and only if
	there exists some $x_0\in M$ such that (see Proposition \ref{prop-g-alph})
	\[\int_{1}^{+\infty }\frac{t^{2\alpha-1} d  t}{\mu (B(x_0,t))}<\infty. \]

Our first main result is the following sharp criterion for the existence of positive solutions to (\ref{eq-non-local-integral-inequality}) under conditions (VD) and (PI). 
\begin{theorem}\label{thm-main}\rm
	Let $(M,g)$ be a complete connected non-compact Riemannian manifold satisfying conditions (VD) and (PI). Assume that 
	for some $x_0\in M$,
	\[\int_{1}^{+\infty }\frac{t^{2\alpha-1} d  t}{\mu (B(x_0,t))}<\infty. \] 
	Then there exists a positive solution to (\ref{eq-non-local-integral-inequality}) if and only if  
	there exist $o\in M$, $r_{0}>0$ 
	such that the following two
	conditions hold:  
	\begin{equation}
		\int_{r_{0}}^{+\infty }\left[ \int_{r}^{+\infty }\frac{t^{2\alpha-1} d  t}{\mu (B(o,t))}%
		\right] ^{q-1}\frac{\sigma (B(o,r))}{\mu (B(o,r))}r^{2\alpha-1} d  r<\infty ,
		\label{cond-int1}
	\end{equation}%
	and
	\begin{equation}
		\sup_{x\in M, r>r_0}\left[ \int_{0}^{+\infty}\frac{\sigma (B(x,s)\cap B(o, r))}{\mu (B(x,s))}%
		\,s^{2\alpha-1} d  s\right] \,\left[ \int_{r}^{+\infty }\frac{t^{2\alpha-1} d  t}{\mu (B(o,t))}\right]
		^{q-1}<+\infty.
		\label{cond-int2}
	\end{equation}%
\end{theorem}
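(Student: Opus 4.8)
The plan is to deduce Theorem~\ref{thm-main} from the general criterion of this paper, which characterizes the existence of positive solutions of \eqref{eq-non-local-integral-inequality} whenever the Green kernel $G^{(\alpha)}$ has the quasi-metric property, and then to rewrite the resulting abstract conditions in the explicit form \eqref{cond-int1}--\eqref{cond-int2}. Thus the proof splits into three parts: (i) checking that, under (VD) and (PI), $\rho(x,y):=1/G^{(\alpha)}(x,y)$ is a quasi-metric; (ii) recording the sharp two-sided bound for $G^{(\alpha)}$; and (iii) performing the translation. Write
\[
h(x,r):=\int_r^{\infty}\frac{s^{2\alpha-1}}{\mu(B(x,s))}\,ds ,
\]
so that Proposition~\ref{prop-g-alph} (whose hypotheses are supplied by transience together with (VD), (PI) and the Li--Yau estimate) gives $G^{(\alpha)}(x,y)\asymp h(x,d(x,y))$ for $x\neq y$; note that $h(o,r)=\int_r^{\infty}\frac{t^{2\alpha-1}}{\mu(B(o,t))}\,dt$ is precisely the quantity occurring in \eqref{cond-int1}--\eqref{cond-int2}.

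For (i), the key point is that $r\mapsto h(x,r)$ is doubling: splitting $h(x,r)=\int_r^{2r}\frac{s^{2\alpha-1}}{\mu(B(x,s))}\,ds+h(x,2r)$ and using (VD) to bound $\int_r^{2r}\frac{s^{2\alpha-1}}{\mu(B(x,s))}\,ds\lesssim\frac{r^{2\alpha}}{\mu(B(x,r))}\lesssim\int_{2r}^{4r}\frac{s^{2\alpha-1}}{\mu(B(x,s))}\,ds\le h(x,2r)$ yields $h(x,r)\asymp h(x,2r)$. Together with the quasi-symmetry $h(x,d(x,y))\asymp h(y,d(x,y))$ (immediate from $\mu(B(x,s))\asymp\mu(B(y,s))$ for $s\gtrsim d(x,y)$, or from symmetry of $G^{(\alpha)}$) and the inequality $d(x,y)\le 2\max\{d(x,z),d(z,y)\}$, a routine argument gives $\frac1{G^{(\alpha)}(x,y)}\lesssim\frac1{G^{(\alpha)}(x,z)}+\frac1{G^{(\alpha)}(z,y)}$; since transience forces $G^{(\alpha)}(x,x)=+\infty$ while $G^{(\alpha)}$ is symmetric, lower semicontinuous and finite off the diagonal by Proposition~\ref{prop-g-alph}, $\rho$ is a genuine quasi-metric.

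For (iii), set $\mathbf{G}\nu(x):=\int_M G^{(\alpha)}(x,y)\,d\nu(y)$. A layer-cake computation (Fubini, using $G^{(\alpha)}(x,y)\asymp\int_{d(x,y)}^{\infty}\frac{s^{2\alpha-1}}{\mu(B(x,s))}\,ds$) gives, for any Borel set $E$,
\[
\mathbf{G}\big(\mathbf{1}_{E}\,\sigma\big)(x)\ \asymp\ \int_0^{\infty}\frac{\sigma(B(x,s)\cap E)}{\mu(B(x,s))}\,s^{2\alpha-1}\,ds ,
\]
which identifies the inner bracket of \eqref{cond-int2} with $\mathbf{G}(\mathbf{1}_{B(o,r)}\sigma)(x)$, and the running density in \eqref{cond-int1} with that of $\mathbf{G}(\mathbf{1}_{B(o,r_0)^{c}}\sigma)(o)$. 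Next, with $g(r):=h(o,r)$ one has $\big[\int_r^{\infty}\frac{t^{2\alpha-1}}{\mu(B(o,t))}\,dt\big]^{q-1}=g(r)^{q-1}$, the map $r\mapsto t:=1/g(r)$ is a decreasing bijection of $[r_0,\infty)$ onto some $[t_0,\infty)$ under which the $\rho$-balls $\{y:G^{(\alpha)}(o,y)>1/t\}$ correspond, up to (VD)-controlled dilation, to the geodesic balls $B(o,r)$, and $-\,dg(r)=\frac{r^{2\alpha-1}}{\mu(B(o,r))}\,dr=t^{-2}\,dt$. Feeding these into the abstract hypotheses, \eqref{cond-int2} becomes exactly the uniform Carleson-type testing bound for the $q$-power weight, while the integral in \eqref{cond-int1} equals $\int_{r_0}^{\infty}g(r)^{q-1}\,\sigma(B(o,r))\,(-\,dg(r))$ and, after one integration by parts, is comparable to $\int_{B(o,r_0)^{c}}G^{(\alpha)}(o,y)^{q}\,d\sigma(y)$ up to the finite boundary term $\sigma(B(o,r_0))\,g(r_0)^{q}$, so that \eqref{cond-int1} amounts to the abstract $q$-integrability condition ``$G^{(\alpha)}(o,\cdot)\in L^{q}(\sigma)$ away from $o$''; the cut-offs $r_0$ and $t_0$ correspond, and the supremum over $x\in M$ is carried along unchanged.

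The substantive content lies in the general criterion, which I would prove as follows. For \emph{sufficiency} it suffices, since $q>1$, to produce one strictly positive $w$ with $\mathbf{G}(w^{q}\sigma)\le C\,w$ everywhere: then $u:=\lambda w$ solves \eqref{eq-non-local-integral-inequality} once $\lambda^{q-1}C\le 1$. Such a $w$ is built from the truncated potentials $\mathbf{G}(\mathbf{1}_{B(o,r)}\sigma)$ along a Whitney-type decomposition of $M$ into $\rho$-balls, the point being that \eqref{cond-int1}--\eqref{cond-int2} are exactly the bounds needed to run the iteration $\mathbf{G}\big((\mathbf{G}(\mathbf{1}_{B(o,r)}\sigma))^{q}\sigma\big)\lesssim\mathbf{G}(\mathbf{1}_{B(o,r)}\sigma)$ uniformly in $r>r_0$; the quasi-metric property enters through the Verbitsky-type ``$3G$'' estimate $\int_M G^{(\alpha)}(x,y)G^{(\alpha)}(y,z)\,d\sigma(y)\lesssim G^{(\alpha)}(x,z)\big(\mathbf{G}\sigma(x)+\mathbf{G}\sigma(z)\big)$. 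For \emph{necessity}, starting from a positive lower-semicontinuous solution $u$ one picks a ball $B$ and $\varepsilon>0$ with $u\ge\varepsilon$ on $B$, deduces $u(x)\ge\varepsilon^{q}\mathbf{G}(\mathbf{1}_{B}\sigma)(x)$, and self-improves this bound through the quasi-metric structure so that each step inserts one further Green-potential factor; the constraint $u(x)<\infty$ for $\sigma$-a.e.\ $x$ then forces exactly the convergence in \eqref{cond-int1} and the uniform bound \eqref{cond-int2}. I expect the main obstacle to be this general criterion, and within it the appearance of the two-parameter condition \eqref{cond-int2} (uniform in the center $x$, truncated to $B(o,r)$): the truncation is unavoidable because under (VD) and (PI) the full potential $\mathbf{G}\sigma$ may be identically infinite (already in $\R^{n}$ with $\sigma$ Lebesgue measure), so the sought solution cannot be comparable to $\mathbf{G}\sigma$ and one must work with the exhausting family $\{B(o,r)\}_{r>r_0}$; by contrast the verification of the quasi-metric property in (i) and the Fubini and change-of-variables steps in (iii) are routine.
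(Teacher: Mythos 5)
Your main plan is exactly the paper's: Theorem \ref{thm-main} is proved in Section \ref{sect-proofs-vol} by invoking the abstract criterion of Theorem \ref{thm-equiv-integrability} together with the Green kernel estimate \eqref{G-alpha-app} of Proposition \ref{prop-g-alph} and the quasi-metric property (Corollary \ref{g-alp-qm}, via the doubling of $R(\rho)=\int_\rho^\infty t^{2\alpha-1}dt/\mu(B(o,t))$, your Lemma-type step), and then translating \eqref{mathsection-1} into \eqref{cond-int1} by a layer-cake/change-of-variables $s=R(\rho)$ and \eqref{mathsection-2} into \eqref{cond-int2} by matching the level sets $\{G^{(\alpha)}(o,\cdot)>r^{-1}\}$ with geodesic balls up to doubling-controlled constants; your steps (i)--(iii) reproduce this faithfully (your integration-by-parts route to \eqref{cond-int1} needs the boundary term $\sigma(B(o,r))R(r)^q$ at infinity to be controlled, which does follow from \eqref{cond-int1} by a monotonicity argument as in Lemma \ref{lem-elementary}, whereas the paper's direct identity $\int_M m^q\,d\sigma=q\int_0^{1/a}s^{q-1}\sigma(\{R(d(o,\cdot))>s\})\,ds$ avoids it). The one caveat concerns your closing sketch of the ``general criterion'' itself: since Theorem \ref{thm-equiv-integrability} is a separately stated and proved result of the paper, citing it is legitimate and suffices for Theorem \ref{thm-main}; but your sketch would not serve as a substitute proof. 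In particular the necessity half (``self-improve $u\ge\varepsilon^q\mathbf{G}(\mathbf{1}_B\sigma)$ through the quasi-metric structure'') skips the actual mechanism: the paper first reduces to the minimal solution $m=G^{(\alpha)}(\cdot,o)\wedge a^{-1}$ via Theorem \ref{thm-equiv-minimal} (using Lemma \ref{lem-min} and Theorem \ref{thm-iteration-h}), then obtains \eqref{mathsection-1} from the Hardy-type inequality of Corollary \ref{cor-Hardy-p} combined with a duality argument, and \eqref{mathsection-2} from the iteration of Corollary \ref{cor-psi=tq}; none of these is recovered by the asserted self-improvement. Also, the remark that transience forces $G^{(\alpha)}(x,x)=+\infty$ is neither needed (the paper's quasi-metric notion allows a degenerate $\rho$) nor justified as stated, since transience constrains behavior at infinity rather than on the diagonal.
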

	
	For the special case that $\sigma=\mu$, the above criterion can be written in a more concise form.
	\begin{theorem}\label{thm-main-mu}\rm
		Let $(M, g)$ be a complete connected non-compact Riemannian manifold satisfying conditions (VD) and (PI). Assume that
		for some $x_0\in M$,
		\[\int_{1}^{+\infty }\frac{t^{2\alpha-1} d  t}{\mu (B(x_0,t))}<\infty. \]
		Then there exists  a lower semi-continuous function $u: M\rightarrow (0,+\infty)$ such that
		\begin{equation}\label{eq-non-local-integral-inequality-mu}
			u(x)\geq \int_{M}G^{(\alpha)}(x,y)u^{q}(y) d \mu (y), 
		\end{equation}		
		for all $x\in M$,	if and only if
		there exist $o\in M$ and $r_{0}>0$ such that
		\begin{equation}
			\int_{r_{0}}^{+\infty }\frac{r^{2\alpha q-1}}{[\mu (B(o,r))]^{q-1}} d  r<\infty.
			\label{cond-int1b}
		\end{equation}
	\end{theorem}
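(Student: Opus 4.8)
The plan is to deduce Theorem \ref{thm-main-mu} from Theorem \ref{thm-main} applied with $\sigma=\mu$; the content is then to show that, under \eqref{D}, \eqref{pi} and the transience hypothesis, conditions \eqref{cond-int1} and \eqref{cond-int2} taken with $\sigma=\mu$ are jointly equivalent to \eqref{cond-int1b}. I will write $m(r)=\mu(B(o,r))$, $W(r)=\int_r^\infty t^{2\alpha-1}m(t)^{-1}\,dt$ and
\[
I(x,r)=\int_0^\infty\frac{\mu(B(x,s)\cap B(o,r))}{\mu(B(x,s))}\,s^{2\alpha-1}\,ds,
\]
so that \eqref{cond-int1} with $\sigma=\mu$ reads $\int_{r_0}^\infty W(r)^{q-1}r^{2\alpha-1}\,dr<\infty$ while \eqref{cond-int2} with $\sigma=\mu$ reads $\sup_{x\in M,\,r>r_0}I(x,r)W(r)^{q-1}<\infty$. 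One direction is easy: integrating over $[r,2r]$ and using \eqref{D} gives $W(r)\ge m(2r)^{-1}\int_r^{2r}t^{2\alpha-1}\,dt\gtrsim r^{2\alpha}/m(r)$, so that $W(r)^{q-1}r^{2\alpha-1}\gtrsim r^{2\alpha q-1}m(r)^{-(q-1)}$, whence \eqref{cond-int1}, and a fortiori \eqref{cond-int1} together with \eqref{cond-int2}, implies \eqref{cond-int1b}.

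For the converse I would assume \eqref{cond-int1b} and argue in two steps. First, for \eqref{cond-int1}: since \eqref{D} makes both $r\mapsto r^{2\alpha}/m(r)$ and $r\mapsto W(r)$ doubling, the two integrals $\int_{r_0}^\infty W^{q-1}r^{2\alpha-1}\,dr$ and $\int_{r_0}^\infty (r^{2\alpha}/m(r))^{q-1}r^{2\alpha-1}\,dr$ are comparable to the dyadic sums $\sum_j 2^{2\alpha j}W(2^j)^{q-1}$ and $\sum_j 2^{2\alpha j}a_j^{q-1}$, where $a_j=2^{2\alpha j}/m(2^j)$ and, again by \eqref{D}, $W(2^j)\asymp\sum_{k\ge j}a_k$. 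Thus the implication reduces to the weighted discrete Hardy inequality $\sum_j 2^{2\alpha j}\bigl(\sum_{k\ge j}a_k\bigr)^{q-1}\lesssim\sum_j 2^{2\alpha j}a_j^{q-1}$: for $1<q\le2$ this follows from the subadditivity of $t\mapsto t^{q-1}$ and an interchange of the summations, while for $q\ge2$ one writes $a_k=(a_k2^{\varepsilon k})(2^{-\varepsilon k})$ and applies Hölder with a small $\varepsilon$, $0<\varepsilon<2\alpha/(q-1)$, again followed by Fubini.

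Second, and this is where the real work lies, I would verify \eqref{cond-int2} with $\sigma=\mu$. Splitting $I(x,r)=\int_0^r+\int_r^\infty$, the part over $(0,r)$ is $\le r^{2\alpha}/(2\alpha)$ since $\mu(B(x,s)\cap B(o,r))\le\mu(B(x,s))$; for the part over $(r,\infty)$ one uses $\mu(B(x,s)\cap B(o,r))\le m(r)$, the emptiness of $B(x,s)\cap B(o,r)$ when $s\le d(o,x)-r$, and the comparison $\mu(B(x,s))\asymp\mu(B(o,s))$ valid by \eqref{D} once $s$ is comparable to or larger than $d(o,x)$; a short case analysis on the relative size of $r$ and $d(o,x)$ then gives $I(x,r)\lesssim r^{2\alpha}+m(r)W(r)$, uniformly in $x\in M$ and $r>0$. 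Hence \eqref{cond-int2} reduces to the boundedness on $(r_0,\infty)$ of $r^{2\alpha}W(r)^{q-1}$ and of $m(r)W(r)^q$. The first is immediate from the first step, because $\int_{r_0}^\infty r^{2\alpha}W(r)^{q-1}\,\frac{dr}{r}<\infty$ and $r^{2\alpha}W(r)^{q-1}$ is doubling, hence tends to $0$. For the second I would apply Hölder with exponents $q$ and $q/(q-1)$ to $W(r)=\int_r^\infty t^{2\alpha-1}m(t)^{-1}\,dt$, splitting the power of $t$ so that one factor becomes $\bigl(\int_r^\infty t^{2\alpha q-1}m(t)^{-(q-1)}\,dt\bigr)^{1/q}$ — the tail of \eqref{cond-int1b} — and the other becomes $\bigl(\int_r^\infty t^{-1}m(t)^{-1/(q-1)}\,dt\bigr)^{(q-1)/q}$; invoking that a connected non-compact manifold satisfying \eqref{D} enjoys a reverse volume doubling, $m(t)\gtrsim m(r)(t/r)^{\kappa_0}$ for $t\ge r$ with some $\kappa_0>0$, one gets $m(r)^{1/(q-1)}\int_r^\infty t^{-1}m(t)^{-1/(q-1)}\,dt\le C$, and therefore $m(r)W(r)^q\le C^{q-1}\int_r^\infty t^{2\alpha q-1}m(t)^{-(q-1)}\,dt<\infty$ (in fact $\to0$). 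This yields \eqref{cond-int2} and completes the proof.

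The hard part will be this second step: converting the single scalar condition \eqref{cond-int1b} into control of the full two-parameter supremum in \eqref{cond-int2}, i.e. the uniform-in-$x$ estimate $I(x,r)\lesssim r^{2\alpha}+m(r)W(r)$ and the Hölder argument keeping $m(r)W(r)^q$ bounded. Note also that the naive pointwise comparison $W(r)\asymp r^{2\alpha}/m(r)$ genuinely fails in general (e.g. for $m(r)\asymp r^{2\alpha}(\log r)^2$), so the argument must avoid it; the discrete Hardy inequality for $q\ge2$ in the first step likewise requires a little care.
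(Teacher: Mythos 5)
Your proposal is correct, and its skeleton coincides with the paper's: both deduce the theorem from Theorem \ref{thm-main} with $\sigma=\mu$, both reduce \eqref{cond-int2} to the two scalar bounds $\sup_{r>r_0}r^{2\alpha}W(r)^{q-1}<\infty$ and $\sup_{r>r_0}\mu(B(o,r))W(r)^{q}<\infty$ via exactly the same splitting of $I(x,r)$ into the regimes $d(x,o)\le 2r$ and $d(x,o)>2r$ (this is the content of Proposition \ref{prop-imply}), and both treat \eqref{cond-int1} versus \eqref{cond-int1b} by a one-sided pointwise comparison plus a Hardy-type argument. The differences are in the two key estimates. For the bound on $\mu(B(o,r))W(r)^{q}$ the paper stays with \eqref{cond-int1}: it discretizes and applies the elementary summation Lemma \ref{lem-elementary} to $a_k=\mu(B(o,2^kr_0))$ and $v_k=\sum_{l\ge k}(2^lr_0)^{2\alpha}/\mu(B(o,2^lr_0))$, using only \eqref{D}; you instead go back to \eqref{cond-int1b} and use H\"older with exponents $q,\ q/(q-1)$ together with reverse volume doubling ($\mu(B(o,t))\gtrsim\mu(B(o,r))(t/r)^{\kappa_0}$ for $t\ge r$), which is a genuine extra geometric input, though a standard consequence of completeness, connectedness, non-compactness and \eqref{D}, so no gap — it even yields the slightly stronger conclusion $\mu(B(o,r))W(r)^q\lesssim\int_r^\infty t^{2\alpha q-1}\mu(B(o,t))^{-(q-1)}dt\to0$. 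For the equivalence of \eqref{cond-int1} (with $\sigma=\mu$) and \eqref{cond-int1b}, the paper records Lemma \ref{prop-s} (the case $s=q-1\in(0,1)$) and otherwise refers to \cite[Section 6]{GSV}, while your dyadic discrete Hardy inequality (subadditivity for $1<q\le2$, H\"older with a small geometric weight $2^{\varepsilon k}$, $0<\varepsilon<2\alpha/(q-1)$, for $q\ge2$) is a self-contained proof covering all $q>1$; that is a nice simplification, and your warning that the naive pointwise comparison $W(r)\asymp r^{2\alpha}/\mu(B(o,r))$ fails (e.g.\ $\mu(B(o,r))\asymp r^{2\alpha}(\log r)^2$) correctly identifies the trap both arguments must avoid. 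A minor presentational point: the logical packaging also differs slightly — the paper proves that \eqref{cond-int1} with $\sigma=\mu$ by itself implies \eqref{cond-int2} (which it highlights in a remark), whereas you derive \eqref{cond-int2} from \eqref{cond-int1b}; both suffice for the stated equivalence.
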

	\begin{remark}\rm
		As we will see from the proof in Section \ref{sect-proofs-vol},  the condition (\ref{cond-int1}) implies (\ref{cond-int2}) in the special case that $\sigma =\mu$. This implication does not hold in general, as can be seen from the special case that $\sigma=\delta_{o}$, the Dirac measure concentrated at $o$. 
	\end{remark}
Another interesting special case concerning (\ref{eq-non-local-differential-inequality}) arsing from Euclidean space is the so called fractional Hardy-H\'{e}non 
equations
\begin{align}\label{eq-Henon}
	(-\Delta)^{\alpha}u=|x|^{\gamma}u^q, \quad\mbox{in $\mathbb{R}^n$},
\end{align}
where $\gamma >-2\alpha, n>2\alpha$. The existence and non-existence of different types of solutions (including stable and unstable solutions, and positive distributional solutions) to (\ref{eq-Henon}) and related problems were studied
 in a vast body of literature, let us refer to the papers \cite{ BQ, DDW17, HIK21, HIK23, LB19, MitPoh} and the references therein.

 Theorem \ref{thm-main} leads to the following corollary, which should be well known in the literature. However, we are not able to locate a precise reference. 
\begin{corollary}\label{thm-Henon}\rm
	Consider the Euclidean space $\mathbb{R}^n$ with $\mu$ being the Lebesgue measure. Fix $\alpha \in (0,1)$ and assume that $n>2\alpha$. Let $\gamma>-2\alpha$ be a fixed parameter. Then there is  a lower semi-continuous function $u: M\rightarrow (0,+\infty)$ which is a super-solution to (\ref{eq-Henon}) such that
	\[u(x)\geq \int_{M}G^{(\alpha)}(x,y)u^{q}(y) \lvert y\rvert^{\gamma}d \mu (y),\] if and only if
	\[q> \frac{n+\gamma}{n-2\alpha}.\] 
\end{corollary}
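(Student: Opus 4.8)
The plan is to recognize \eqref{eq-Henon}, in the integral form stated, as exactly the special case of \eqref{eq-non-local-integral-inequality} on $M=\mathbb R^n$ with $\mu$ the Lebesgue measure and $d\sigma(y)=|y|^\gamma\,d\mu(y)$, and then to make the criterion of Theorem \ref{thm-main} fully explicit using the scaling of Euclidean balls. First I would record the routine facts: $(\mathbb R^n,\mathrm{eucl})$ satisfies (VD) and (PI); $\mu(B(x,r))=\omega_n r^n$ for all $x,r$; hence $\int_1^\infty t^{2\alpha-1}\mu(B(x_0,t))^{-1}\,dt\asymp\int_1^\infty t^{2\alpha-1-n}\,dt$, which is finite exactly because $n>2\alpha$. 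Since $\gamma>-2\alpha>-n$, the measure $\sigma$ is Radon, $\sigma(B(0,r))=c_{n,\gamma}\,r^{n+\gamma}$, and more generally $\sigma(B(o,r))\asymp r^{n+\gamma}$ for $r\ge 2|o|$ with constants independent of $o$; also $\int_r^\infty t^{2\alpha-1}\mu(B(o,t))^{-1}\,dt\asymp r^{2\alpha-n}$.

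Then I would evaluate \eqref{cond-int1}: for large $r$ its integrand is $\asymp r^{(2\alpha-n)(q-1)+\gamma+2\alpha-1}$, so \eqref{cond-int1} holds iff $(2\alpha-n)(q-1)+\gamma+2\alpha<0$, which (dividing by the negative number $2\alpha-n$) rearranges to precisely $q>\frac{n+\gamma}{n-2\alpha}$; moreover this equivalence is insensitive to the chosen center $o$. In particular, if the claimed super-solution exists then Theorem \ref{thm-main} yields \eqref{cond-int1} for some $o,r_0$, whence $q>\frac{n+\gamma}{n-2\alpha}$ — this settles the "only if" direction.

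For the converse, assuming $q>\frac{n+\gamma}{n-2\alpha}$ and taking $o=0$, it remains to verify \eqref{cond-int2}. I would prove the uniform bound $J(x,r)\defeq\int_0^\infty \sigma(B(x,s)\cap B(0,r))\,\mu(B(x,s))^{-1}s^{2\alpha-1}\,ds\lesssim r^{2\alpha+\gamma}$ for all $x\in\mathbb R^n$ and $r>0$, by splitting at $s=r$: for $s\ge r$ one uses $\sigma(B(x,s)\cap B(0,r))\le\sigma(B(0,r))\asymp r^{n+\gamma}$ together with $\mu(B(x,s))\asymp s^n$ to get a contribution $\asymp r^{n+\gamma}\int_r^\infty s^{2\alpha-1-n}\,ds\asymp r^{2\alpha+\gamma}$; for $s<r$ one may assume $|x|<2r$ (otherwise the intersection is empty), and then the averaged-weight estimate $\mu(B(x,s))^{-1}\int_{B(x,s)}|y|^\gamma\,dy\asymp(|x|\vee s)^\gamma$ (obtained by distinguishing $|x|\gtrless 2s$) reduces the integral to $\int_0^r(|x|\vee s)^\gamma s^{2\alpha-1}\,ds$, which is $\lesssim r^{2\alpha+\gamma}$ because $\gamma+2\alpha>0$ makes $s^{\gamma+2\alpha-1}$ integrable near $0$ and $|x|<2r$. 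Plugging this into \eqref{cond-int2} gives a quantity $\lesssim r^{2\alpha+\gamma}\cdot(r^{2\alpha-n})^{q-1}=r^{\,2\alpha+\gamma-(n-2\alpha)(q-1)}$, and $q>\frac{n+\gamma}{n-2\alpha}$ is exactly the assertion that this exponent is negative, so the supremum over $r>r_0$ is finite. Theorem \ref{thm-main} then supplies the desired positive solution.

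The only place where genuine (though elementary) work is needed is the uniform-in-$x$ bound on $J(x,r)$ for \eqref{cond-int2}: one must simultaneously control the apparently non-integrable singularity of $|y|^\gamma$ at the origin — tamed precisely by the hypothesis $\gamma>-2\alpha$, which is what makes $\int_0 s^{\gamma+2\alpha-1}\,ds$ converge — and the observation that the supremum is effectively attained for $x$ within a bounded multiple of $B(0,r)$. Every other step is a one-line exponent count, and no idea beyond Theorem \ref{thm-main} is required.
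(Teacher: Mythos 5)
Your proposal is correct and follows essentially the same route as the paper: reduce to Theorem \ref{thm-main}, evaluate (\ref{cond-int1}) by Euclidean scaling of $\mu(B(o,t))\asymp t^n$ and $\sigma(B(o,r))\asymp r^{n+\gamma}$ to obtain the exponent condition $q>\frac{n+\gamma}{n-2\alpha}$, and verify (\ref{cond-int2}) by a uniform-in-$x$ estimate obtained from a case analysis on whether $x$ lies within a bounded multiple of $B(o,r)$. The only (harmless) difference is that the paper establishes two-sided bounds showing (\ref{cond-int2}) holds precisely when $q\ge\frac{n+\gamma}{n-2\alpha}$, whereas you prove only the upper bound $\lesssim r^{2\alpha+\gamma}$, which indeed suffices for the corollary.
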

\begin{remark}\rm
	In the above specific settings, the corresponding Radon measure $\sigma$ is absolutely continuous with respect to $\mu$ with density function 
	$\lvert\cdot\rvert^{\gamma}$. The condition $\gamma>-2\alpha$ guarantees local finiteness of $\sigma$, and here $n>2\alpha$ restriction is simply the transience condition.
\end{remark}

Now we briefly review some closely related classical results.
First for the Laplace-Beltrami operator on a complete manifold, consider the semilinear differential inequality
\begin{equation}
	\Delta u+ u^{{ q}}\sigma\leq0.\label{semi-ineq}
\end{equation}
When $\sigma=\mu$, Grigor'yan and Sun  proved   in  \cite{GS1} that if for some $o\in M$ and all large enough $r$,
\begin{equation}\label{vol-sem}
	\mu(B(o,r))\lesssim  r^{\frac{2q}{q-1}}(\ln r)^{\frac{1}{q-1}},
\end{equation}
then the only non-negative solution to (\ref{semi-ineq}) is zero. Here we note that there is no additional assumption on manifolds in 
their
result.
Later in \cite{GSV}, Grigor'yan, Sun and Verbitsky obtained the sharp integral type criteria for the existence of positive solutions to (\ref{semi-ineq}),  under the additional conditions (VD) and (PI).   Our main result, Theorem \ref{thm-main}, is greatly inspired by \cite{GSV}.
We emphasize that a much higher regularity result than Theorem \ref{thm-main-mu} is obtained in \cite{GSV}, namely,
(\ref{semi-ineq}) has a $C^2$ positive solution if and only if
\[\int_{r_{0}}^{+\infty }\frac{r^{2q-1}}{[\mu (B(o,r))]^{q-1}} d  r<\infty.\]
In the Euclidean setting $M=\mathbb{R}^n$, Wang and Xiao in \cite{XW} obtained that when $n>2\alpha$,   non-negative solutions
	to (\ref{eq-non-local-differential-inequality}) are all trivial if and only if ${ q}\le\frac{n}{n-2\alpha}$.
Other generalizations to  fractional quasilinear differential inequality in Euclidean space can be found in \cite{LSX}.
	
	{In the following we explain the main strategy to prove Theorem \ref{thm-main} and Theorem \ref{thm-main-mu},  and the intermediate results involved. We follow the main ideas of Kalton and Verbitsky in the work \cite{KV}.} 
	
As the fundamental step,
	by tools from potential theory, we show that the existence of a positive solution to (\ref{eq-non-local-integral-inequality}) is equivalent to the existence of a certain ``minimal" positive solution, see Theorem \ref{thm-equiv-minimal}. 
	The crucial condition is the so-called \emph{quasi-metric}  property (see Definition \ref{defi-quasi-metric}; also called (3G) condition in some literature, see \cite{GSV}) for the Green kernel $G^{(\alpha)}$:
	there exists a constant $\kappa\geq1$ such that
	\begin{equation}
		G^{(\alpha)}(x,y)\wedge G^{(\alpha)}(y,z)\leq \kappa G^{(\alpha)}(x,z),\quad \mbox{for all $x,y,z\in M$}.
\end{equation}
	
	Fix some $o\in M$ and $a>0$, set
	\begin{equation*}
		m(x)=m_{a,o}(x)= G^{(\alpha)}(x,o)\wedge a^{-1}.
	\end{equation*}
	\begin{theorem}\label{thm-equiv-minimal}\rm
		Let $M$ be a complete connected non-compact Riemannian manifold. Assume that the fractional Laplacian $	(-\Delta)^{\alpha}$ is transient,
		and suppose that $G^{(\alpha)}$ is quasi-metric. Then the followings are equivalent:
			\begin{enumerate}
				\item There exists a 
				positive solution to (\ref{eq-non-local-integral-inequality}) in the sense of Definition \ref{defi-int-ineq}.
				\item There exists a lower semi-continuous function $u: M\rightarrow (0,+\infty)$ such that (\ref{eq-non-local-integral-inequality}) holds for all $x\in M$.
				\item For $\sigma$-a.e. $x\in M$,	\[m(x)\gtrsim  \int_{M}
				G^{(\alpha)}(x,y) m^q(y) d \sigma(y).
				\]
				\item For all $x\in M$,	\[m(x)\gtrsim  \int_{M}
				G^{(\alpha)}(x,y) m^q(y) d \sigma(y).
				\]
			\end{enumerate}
	\end{theorem}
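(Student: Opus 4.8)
The plan is to prove the equivalence along the cycle $(4)\Rightarrow(2)\Rightarrow(1)\Rightarrow(3)\Rightarrow(4)$, which closes all four statements. I abbreviate $\mathbf{G}\nu(x)=\int_M G^{(\alpha)}(x,y)\,d\nu(y)$ for the Green potential of a Radon measure $\nu$, so that \eqref{eq-non-local-integral-inequality} reads $u\ge\mathbf{G}(u^q\sigma)$ and statements $(3)$--$(4)$ read $m\gtrsim\mathbf{G}(m^q\sigma)$. Two of the implications cost nothing. $(2)\Rightarrow(1)$ is immediate from Definition \ref{defi-int-ineq}. For $(4)\Rightarrow(2)$, observe that $m=m_{a,o}$ is lower semicontinuous (a minimum of the lower semicontinuous function $G^{(\alpha)}(\cdot,o)$ and a constant) with values in $(0,a^{-1}]\subset(0,+\infty)$; if $m\ge c\,\mathbf{G}(m^q\sigma)$ pointwise, then $u\defeq\lambda m$ satisfies $u=\lambda m\ge\lambda c\,\mathbf{G}(m^q\sigma)=c\lambda^{1-q}\,\mathbf{G}(u^q\sigma)\ge\mathbf{G}(u^q\sigma)$ as soon as $\lambda\le c^{1/(q-1)}$, which is possible because $q>1$; this $u$ witnesses $(2)$.

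The substantive implication is $(1)\Rightarrow(3)$, which I would split into a lower and an upper bound. For the lower bound: let $u$ be a positive solution; after passing to its lower semicontinuous $\sigma$-version and discarding the trivial case $\sigma\equiv0$, positivity $\sigma$-a.e. and lower semicontinuity yield some $\varepsilon>0$ and an open geodesic ball $B=B(o,r_0)$ with $\sigma(B)>0$, $u>\varepsilon$ on $B$, and $\overline{B}$ compact (Hopf--Rinow). Since $G^{(\alpha)}$ is lower semicontinuous and strictly positive, it attains a positive minimum $\delta>0$ on the compact set $\overline{B}\times\overline{B}$, and the quasi-metric inequality applied to the triple $(x,o,y)$ gives $G^{(\alpha)}(x,y)\ge\kappa^{-1}\big(G^{(\alpha)}(x,o)\wedge\delta\big)$ for all $x\in M$ and $y\in B$. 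Integrating over $B$,
\[
u(x)\ \ge\ \mathbf{G}(u^q\sigma)(x)\ \ge\ \varepsilon^{q}\int_{B}G^{(\alpha)}(x,y)\,d\sigma(y)\ \ge\ \varepsilon^{q}\kappa^{-1}\sigma(B)\,\big(G^{(\alpha)}(x,o)\wedge\delta\big)
\]
for $\sigma$-a.e. $x$. Choosing $a\defeq\delta^{-1}$ and $m\defeq m_{a,o}$ this says $u\gtrsim m$ ($\sigma$-a.e.); in particular $m^{q}\lesssim u^{q}$, so $\mathbf{G}(m^q\sigma)\lesssim\mathbf{G}(u^q\sigma)\le u<+\infty$ $\sigma$-a.e.

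For the upper bound it remains to prove $\mathbf{G}(m^q\sigma)\lesssim m$ ($\sigma$-a.e.). Here I would fix $x$ and split $\int_M G^{(\alpha)}(x,y)\,m(y)^q\,d\sigma(y)$ according to whether $G^{(\alpha)}(x,y)\le\kappa\,G^{(\alpha)}(x,o)$ or not. On the second region the quasi-metric property forces $G^{(\alpha)}(y,o)\le\kappa\,G^{(\alpha)}(x,o)$, hence $m(y)\le\kappa\,m(x)$, and, using also the elementary truncation bound $m^{q}\le a^{1-q}m$, this part is dominated by a constant multiple of $m(x)$ times a truncated potential of $\sigma$ localized near $x$; on the first region one uses the $3G$-type inequality $G^{(\alpha)}(x,y)G^{(\alpha)}(y,o)\le\kappa\,G^{(\alpha)}(x,o)\big(G^{(\alpha)}(x,y)+G^{(\alpha)}(y,o)\big)$ (a consequence of the quasi-metric property), again $m^{q}\le a^{1-q}m$, and the finiteness/lower bound from the previous step, to compare the remaining integral with $m(x)=G^{(\alpha)}(x,o)\wedge a^{-1}$. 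I expect this to be the main obstacle: near the pole $o$ only the truncation is available, while far from $o$ one must use the genuine decay of $G^{(\alpha)}(\cdot,o)$, and the quantitative matching of the two — essentially the decay rate of $G^{(\alpha)}(\cdot,o)$ against the growth of $\sigma$ — must be extracted from the existence of the supersolution $u$ via $\mathbf{G}(u^q\sigma)\le u$ and $u\gtrsim m$, in the spirit of Kalton--Verbitsky \cite{KV}.

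Finally, $(3)\Rightarrow(4)$ promotes the $\sigma$-a.e. inequality to all of $M$. The potential $v\defeq\mathbf{G}(m^q\sigma)$ is lower semicontinuous (Fatou, using lower semicontinuity of $G^{(\alpha)}$) and satisfies $v\le c^{-1}m\le c^{-1}a^{-1}$ $\sigma$-a.e.; a maximum/domination principle for Green potentials — valid since $G^{(\alpha)}$ is quasi-metric — first upgrades the crude bound $v\le c^{-1}a^{-1}$ from $(m^q\sigma)$-a.e. to every point (so $v$ is finite everywhere), and then, with the bounded superharmonic comparison function $c^{-1}m$ (a constant multiple of a truncated Green potential of a point mass) in place of a constant, upgrades $v\le c^{-1}m$ to $v\le C\,m$ on all of $M$, which is $(4)$. (Equivalently, feeding the $\sigma$-a.e. inequality of $(3)$ back into $v=\mathbf{G}(m^q\sigma)$ shows $v\ge c^{q}\mathbf{G}(v^q\sigma)$ everywhere, so the lower semicontinuous positive function $v$ witnesses $(2)$ directly; but going through $(4)$ keeps the cycle self-contained.)
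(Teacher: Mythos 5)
Your implications $(4)\Rightarrow(2)\Rightarrow(1)$ and your lower bound $u\gtrsim m$ in $(1)\Rightarrow(3)$ are fine and essentially reproduce the paper's Lemma \ref{lem-min}; your treatment of $(3)\Rightarrow(4)$ (domination via the quasi-metric property, or the parenthetical shortcut feeding $(3)$ back into $v=G^{(\alpha)}_{\sigma}(m^q)$) is also in line with the paper, which implements exactly this "domination principle with majorant $m$" by applying the weak maximum principle to the modified kernel $\hat{G}^{(\alpha)}(x,y)=m(x)^{-1}G^{(\alpha)}(x,y)m(y)^q$ (Lemma \ref{lem-quasi-m} plus Lemma \ref{lem-kernel-multiple}). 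The genuine gap is the upper bound in $(1)\Rightarrow(3)$, i.e. $G^{(\alpha)}_{\sigma}(m^q)\lesssim m$, which is the whole substance of the theorem and which you explicitly leave open ("I expect this to be the main obstacle \dots must be extracted \dots in the spirit of Kalton--Verbitsky"). Your proposed two-region splitting cannot close as stated: on the region $G^{(\alpha)}(x,y)>\kappa G^{(\alpha)}(x,o)$ you need a bound of the form $\int G^{(\alpha)}(x,y)\,d\sigma(y)\lesssim m(x)^{1-q}$, and on the complementary region you need $\int m^q\,d\sigma<\infty$; but these are precisely conditions (\ref{mathsection-1})--(\ref{mathsection-2}) of Theorem \ref{thm-equiv-integrability}, which the paper \emph{derives from} statement $(3)$ by the Grigor'yan--Verbitsky iteration, not the other way around. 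Knowing only $u\ge G^{(\alpha)}_{\sigma}(u^q)$ and $u\gtrsim m$ gives $G^{(\alpha)}_{\sigma}(m^q)\lesssim u$, which is useless because $u$ may be far larger than $m$; some nonlinear a priori estimate is unavoidable here.

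The paper's mechanism, absent from your proposal, is: rescale the solution, $v=\varepsilon u$, so that $v\ge G^{(\alpha)}_{\sigma}(v^q)+(\varepsilon^{1-q}-1)G^{(\alpha)}_{\sigma}(v^q)$ on the full-measure set $A$; use Lemma \ref{lem-min} to bound the surplus term from below by $Cm$, obtaining the inhomogeneous inequality $v\ge G^{(\alpha)}_{\sigma}(v^q)+Cm$ on $A$; then invoke the a priori estimate of Theorem \ref{thm-iteration-h} (whose hypothesis is the weak maximum principle for $\tilde{G}^{(\alpha)}(x,y)=G^{(\alpha)}(x,y)/(m(x)m(y))$, guaranteed by Lemma \ref{lem-quasi-m} and Lemma \ref{lem-quasi-weak}), which yields $G^{(\alpha)}_{\sigma}((Cm)^q)\le \frac{b}{q-1}Cm$ on $A$, i.e. statement $(3)$. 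This iteration-based estimate (ultimately Lemma \ref{lem-iter} and the ODE comparison in Theorem \ref{thm-iteration}) is the missing ingredient your sketch would need to supply; without it, or an equivalent substitute, the implication $(1)\Rightarrow(3)$ is not proved.
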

	
The above theorem allows us to relate the existence of positive solutions to (\ref{eq-non-local-integral-inequality}) with certain estimates on the Green kernel.
\begin{theorem}\label{thm-equiv-integrability}\rm
	Let $M$ be a complete connected non-compact  Riemannian manifold. Assume that the fractional Laplacian $	(-\Delta)^{\alpha}$ is transient, and suppose that $G^{(\alpha)}$ is quasi-metric. Then there exists a  positive solution to (\ref{eq-non-local-integral-inequality}) if and only if the following two conditions hold:
	\begin{equation}\label{mathsection-1}
		\int_{M}m^q(x)d  \sigma (x)<\infty ,
	\end{equation}
	and
	\begin{equation} \label{mathsection-2}
		\sup_{x\in M}\int_{\{y\in M:\,G^{(\alpha)}(o,y)>r^{-1}\}}G^{(\alpha)}(x,y) d  \sigma (y)\lesssim r^{q-1},
	\end{equation}
	for all $r>a$.	
\end{theorem}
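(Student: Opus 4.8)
The plan is to reduce the statement, via Theorem~\ref{thm-equiv-minimal}, to a purely potential-theoretic equivalence for the model function $m=m_{a,o}=G^{(\alpha)}(\cdot,o)\wedge a^{-1}$. Since item~(4) of that theorem says that a positive solution to~\eqref{eq-non-local-integral-inequality} exists if and only if
\[
 m(x)\ \gtrsim\ \int_M G^{(\alpha)}(x,y)\,m^{q}(y)\,d\sigma(y)\qquad\text{for all }x\in M,\qquad(\star)
\]
it suffices to show that $(\star)$ is equivalent to the conjunction of~\eqref{mathsection-1} and~\eqref{mathsection-2}. Throughout I write $E_r=\{y\in M:\,G^{(\alpha)}(o,y)>r^{-1}\}$, which is open by lower semi-continuity of $G^{(\alpha)}$, and I use that $m\equiv a^{-1}$ on $E_a$, that $m=G^{(\alpha)}(\cdot,o)$ on $M\setminus E_a$, and that $m>r^{-1}$ on $E_r$ for every $r>a$. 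The two standing tools are the quasi-metric inequality, with constant $\kappa$, and the maximum principle for Green potentials, $\sup_M G^{(\alpha)}\nu=\sup_{\operatorname{supp}\nu}G^{(\alpha)}\nu$, which holds because $G^{(\alpha)}$ is the Green kernel of a Hunt process (Section~\ref{sect-frac-Lap}).

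For $(\star)\Rightarrow\eqref{mathsection-1}\wedge\eqref{mathsection-2}$ I would obtain the two conclusions by separate devices. For~\eqref{mathsection-2}: fix $r>a$ and decompose $E_r$ into the core $E_a$ and the dyadic Green--annuli $A_j=\{\,2^{-j-1}a^{-1}<G^{(\alpha)}(o,\cdot)\le 2^{-j}a^{-1}\,\}$, $j\ge0$, noting that only those $A_j$ with $2^{-j}a^{-1}\gtrsim r^{-1}$ meet $E_r$. On each piece $P\in\{E_a,A_j\}$ the maximum principle replaces $\sup_{x}\int_{P}G^{(\alpha)}(x,y)\,d\sigma(y)$ by a supremum over base points $z\in\overline P$; for such $z$ one has $m(z)\asymp\ell_P$ and $m\ge\tfrac12\ell_P$ throughout $P$, where $\ell_{E_a}=a^{-1}$ and $\ell_{A_j}=2^{-j}a^{-1}$, so applying $(\star)$ at $z$ gives $\int_{P}G^{(\alpha)}(z,y)\,d\sigma(y)\lesssim\ell_P^{\,1-q}$. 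Summing over the annuli meeting $E_r$ is a geometric series of ratio $2^{q-1}>1$, hence $\lesssim r^{q-1}$, which is~\eqref{mathsection-2}. For~\eqref{mathsection-1}: test $(\star)$ at points far from $o$. Generically $G^{(\alpha)}(o,\cdot)\to0$ at infinity, so for each small $\delta$ there is $x$ with $G^{(\alpha)}(o,x)=\delta$ and hence $m(x)=\delta$; for every $y$ with $G^{(\alpha)}(o,y)\ge2\kappa\delta$ the quasi-metric inequality pins $\delta/\kappa\le G^{(\alpha)}(x,y)\le\kappa\delta$, so restricting $(\star)$ to $\{G^{(\alpha)}(o,\cdot)\ge2\kappa\delta\}$ gives $\delta\int_{\{G^{(\alpha)}(o,\cdot)\ge2\kappa\delta\}}m^{q}\,d\sigma\lesssim\delta$, and letting $\delta\downarrow0$ (with $G^{(\alpha)}>0$ and monotone convergence) yields $\int_M m^q\,d\sigma<\infty$. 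In the degenerate case where $G^{(\alpha)}(o,\cdot)$ is bounded below, the quasi-metric bound makes $G^{(\alpha)}$ uniformly positive, so $(\star)$ forces $\sigma(M)<\infty$ and~\eqref{mathsection-1} is immediate.

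For the converse $\eqref{mathsection-1}\wedge\eqref{mathsection-2}\Rightarrow(\star)$ I would fix $x$ and bound $\int_M G^{(\alpha)}(x,y)m^q(y)\,d\sigma(y)$ by splitting $M$ into $E_a$ (on which $m^q=a^{-q}$) and $M\setminus E_a$ (on which $m=G^{(\alpha)}(\cdot,o)$), and splitting each of these according to whether $G^{(\alpha)}(x,y)\le G^{(\alpha)}(o,y)$ or not. On the first sub-region the quasi-metric inequality gives $G^{(\alpha)}(x,y)\le\kappa\,G^{(\alpha)}(x,o)$, so its contribution is $\lesssim G^{(\alpha)}(x,o)\int_M m^q\,d\sigma$, absorbed by~\eqref{mathsection-1}. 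On the second sub-region $G^{(\alpha)}(o,y)\le\kappa\,G^{(\alpha)}(x,o)$, so after a dyadic decomposition in the level of $G^{(\alpha)}(o,\cdot)$ the relevant $y$ lie in sets $E_{r_k}$ with $r_k\asymp2^{k}/G^{(\alpha)}(x,o)$, each exceeding $a$ on the ranges where these sets are non-empty, and~\eqref{mathsection-2} bounds $\int_{E_{r_k}}G^{(\alpha)}(x,y)\,d\sigma(y)\lesssim r_k^{q-1}$; the weights telescope into a convergent geometric series of size $\lesssim G^{(\alpha)}(x,o)$. Separating the cases $G^{(\alpha)}(x,o)\ge a^{-1}$ (where $m(x)=a^{-1}$ and the $E_a$-part is bounded directly by~\eqref{mathsection-2}) and $G^{(\alpha)}(x,o)<a^{-1}$ (where $m(x)=G^{(\alpha)}(x,o)$) and collecting the pieces gives $\int_M G^{(\alpha)}(x,y)m^q(y)\,d\sigma(y)\lesssim m(x)$, i.e.\ $(\star)$.

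I expect the only genuine difficulty to be organizational: keeping all implied constants uniform in $x$ and $r$ through the finitely many borderline scales forced by $\kappa$ (near $r=a$, and at the interfaces of the dyadic decompositions), and checking that each geometric series indeed converges — which is precisely where $q>1$ enters. The conceptual content sits in two steps of the forward direction: using the maximum principle to trade the uncontrolled supremum over $x$ in~\eqref{mathsection-2} for base points on which $m$ lives at a single scale, and extracting the global integrability~\eqref{mathsection-1} by evaluating $(\star)$ along points tending to infinity. These two mechanisms — a scale-by-scale local estimate and a global tail estimate — reflect the fact, noted after Theorem~\ref{thm-main-mu}, that~\eqref{mathsection-1} and~\eqref{mathsection-2} are genuinely independent conditions.
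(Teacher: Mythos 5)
Your reduction to the pointwise inequality $m\gtrsim G^{(\alpha)}_{\sigma}(m^q)$ via Theorem \ref{thm-equiv-minimal} is the same first step as the paper's, but from there you take a genuinely different route. For \eqref{mathsection-2} the paper starts from $G^{(\alpha)}_{\sigma_r}1\le Cr^qm$ and washes out the factor $m$ by the infinite iteration of Corollary \ref{cor-psi=tq}, whereas you decompose $E_r$ into Green level sets, apply the minimal inequality at points of each piece, and extend by a maximum principle; for \eqref{mathsection-1} the paper runs an $L^s$-boundedness/duality argument built on Corollary \ref{cor-Hardy-p} and Lemma \ref{lem-min}, whereas you test the inequality at points where $G^{(\alpha)}(o,\cdot)$ is small and use quasi-metricity to pin $G^{(\alpha)}(x,y)\asymp\delta$ on the bulk. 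Both of your mechanisms are sound and arguably more elementary. Two caveats. First, do not invoke the identity $\sup_M G^{(\alpha)}\nu=\sup_{\operatorname{supp}\nu}G^{(\alpha)}\nu$ ``because $G^{(\alpha)}$ is the Green kernel of a Hunt process'': no such maximum principle is established in the paper, and it is not needed — the weak maximum principle with constant $\kappa$ (Lemma \ref{lem-quasi-weak}, via Corollary \ref{cor-max-f}, applied to $\sigma|_P$, which is concentrated on $P$) does exactly the same job at the cost of a factor $\kappa$. Second, exact attainment of a prescribed small value $\delta$ by $G^{(\alpha)}(o,x)$ need not hold (the kernel is only lower semi-continuous), but taking $\delta_n=G^{(\alpha)}(o,x_n)$ along a sequence with $G^{(\alpha)}(o,x_n)\to\inf G^{(\alpha)}(o,\cdot)=0$, together with your separate treatment of the case $\inf G^{(\alpha)}(o,\cdot)>0$, repairs this.

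The one place where the argument as written does not close is the converse direction in the regime $G^{(\alpha)}(x,o)\ge a^{-1}$, i.e.\ $m(x)=a^{-1}$. Your bound for the sub-region $\{G^{(\alpha)}(x,\cdot)\le G^{(\alpha)}(o,\cdot)\}$ is $\lesssim G^{(\alpha)}(x,o)\int_M m^q\,d\sigma$, and your geometric series for the complementary sub-region is also stated to be of size $\lesssim G^{(\alpha)}(x,o)$; both are useless when $G^{(\alpha)}(x,o)\gg a^{-1}$ (e.g.\ $x$ near $o$), since the target there is $\lesssim a^{-1}$, and your case separation only covers the $E_a$-part in that regime, leaving the $M\setminus E_a$-part unbounded by anything better than $G^{(\alpha)}(x,o)$. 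The gap is filled by an estimate uniform in $x$ that uses \eqref{mathsection-2} alone: on $M\setminus E_a$ one has $m=G^{(\alpha)}(o,\cdot)\le a^{-1}$, and either the layer-cake identity (this is precisely the paper's computation giving the uniform bound $q/a$ for $\int_MG^{(\alpha)}(x,y)m^q(y)\,d\sigma(y)$), or your own dyadic decomposition in the levels $2^{-k}a^{-1}$ of $G^{(\alpha)}(o,\cdot)$ with \eqref{mathsection-2} applied at $r=2^{k+1}a>a$, yields $\int_{M\setminus E_a}G^{(\alpha)}(x,y)m^q(y)\,d\sigma(y)\lesssim a^{-1}$ for every $x$. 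Alternatively, keep your decomposition but observe that the admissible indices $k$ (those with $r_k>a$, the only ones whose pieces meet $M\setminus E_a$) start at a $k_0$ with $2^{-k_0}\kappa G^{(\alpha)}(x,o)\asymp a^{-1}$, so the geometric series is in fact $\lesssim\min\{\kappa G^{(\alpha)}(x,o),\,a^{-1}\}\lesssim m(x)$; but this accounting, together with a replacement of the first sub-region bound in that regime, has to be made explicit. With these repairs the proposal does prove the theorem.
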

Based on the above abstract framework, explicit estimates for the Green kernel come into play.   Under conditions (VD) and (PI), the Green kernel $G^{(\alpha)}$ for $(-\Delta)^{\alpha} $ satisfies the following estimate (see Proposition \ref{prop-g-alph}):
\begin{equation}\label{G-alpha-app-0}
	G^{(\alpha)}(x,y)\asymp
	\int_{d(x,y)}^{+\infty }\frac{t^{2\alpha-1}d t}{\mu (B(x,t))},\quad x,y\in
	M.
\end{equation}
In particular, this implies the quasi-metric property needed, see Corollary \ref{g-alp-qm}. 
	Given (\ref{G-alpha-app-0}), the conditions (\ref{mathsection-1}) and (\ref{mathsection-2}) essentially amount to the corresponding growth type criteria (\ref{cond-int1}) and (\ref{cond-int2}).

Now we turn back to the original differential type inequality (\ref{eq-non-local-differential-inequality}). We are not able to find in the literature  the equivalence for the existence of positive solutions to (\ref{eq-non-local-differential-inequality}) and to the integral type inequality (\ref{eq-non-local-integral-inequality}). Here we propose a certain type of weak solutions as a partial solution to this problem.
\begin{definition}\rm
	Let $M$ be a complete connected non-compact Riemannian manifold. Assume that the fractional Laplacian $	(-\Delta)^{\alpha}$ is transient and that $\sigma$ is \emph{absolutely continuous} with respect to the Riemannian measure $\mu$. Denote by $(\mathcal{E}^{(\alpha)}, \mathcal{F}^{(\alpha)})$ the Dirichlet form associated with $(-\Delta)^{\alpha}$. Let $\mathcal{F}^{(\alpha)}_e$ be the corresponding extended Dirichlet space (see Section \ref{sect-frac-Lap} and \cite{FOT}). 
	A  non-negative function $v\in \mathcal{F}^{(\alpha)}_e$ is said to be a positive solution to  (\ref{eq-non-local-differential-inequality}) in $\mathcal{F}^{(\alpha)}_e$-sense if
$v>0$ $\sigma$-a.e., and
	\[\mathcal{E}^{(\alpha)}(v, \varphi)\ge \int_{M} v^q \varphi d \sigma,\]
	for each $\varphi\in \mathcal{F}^{(\alpha)}\cap C_c(M)$ with $\varphi\ge 0$. 
\end{definition}
\begin{remark}\rm
	Note that as an element of $\mathcal{F}^{(\alpha)}_e$, $v$ is only $\mu$-a.e. well defined. Also note by definition $v$ is $\mu$-a.e. finite. The assumption on absolute continuity of $\sigma$ can be weakened to that $\sigma$ is a smooth measure (\cite[p.83 (S.1)(S.2)]{FOT}).
\end{remark}

Utilizing the Dirichlet form theory, we can show that the desired equivalence for the existence of positive weak solutions to (\ref{eq-non-local-differential-inequality}) with that for (\ref{eq-non-local-integral-inequality}).
\begin{theorem}\label{thm-equivalence}\rm
	Let $M$ be a complete connected non-compact Riemannian manifold. Assume that $\sigma$ is absolutely continuous with respect to $\mu$, with a continuous density function $\theta$. Assume that the fractional Laplacian $	(-\Delta)^{\alpha}$ is transient, and suppose that $G^{(\alpha)}$ is quasi-metric. Then there exists a positive solution to (\ref{eq-non-local-integral-inequality})  if and only if there is a  positive solution to  \eqref{eq-non-local-differential-inequality} in $\mathcal{F}^{(\alpha)}_e$-sense.
\end{theorem}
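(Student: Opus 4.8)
The plan is to establish the two implications separately, combining the potential-theoretic characterization of Theorem~\ref{thm-equiv-minimal} with standard facts from the theory of transient regular Dirichlet forms (measures of finite energy integral and their potentials, smooth measures and nests, and the Markovian/comparison properties of $\mathcal{E}^{(\alpha)}$; see \cite{FOT}).

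For the direction ``integral $\Rightarrow$ differential'' I would argue as follows. By Theorem~\ref{thm-equiv-minimal}, a positive solution to \eqref{eq-non-local-integral-inequality} exists iff there is $c_0>0$ with $m(x)\ge c_0\int_M G^{(\alpha)}(x,y)m^q(y)\,d\sigma(y)$ for every $x$. Evaluating this at $x=o$, using $m(o)\le a^{-1}$ and $G^{(\alpha)}(o,\cdot)\ge m$, gives $\int_M m^{q+1}\,d\sigma\le (ac_0)^{-1}<\infty$. Fix $\lambda\in(0,c_0^{1/(q-1)}]$ and put $v_0:=\lambda m$, which is lower semicontinuous, strictly positive and bounded by $\lambda a^{-1}$; by linearity of the Green operator, $G^{(\alpha)}(v_0^q\sigma)=\lambda^q G^{(\alpha)}(m^q\sigma)\le\lambda^q c_0^{-1}m=\lambda^{q-1}c_0^{-1}v_0\le v_0$. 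The measure $\nu:=v_0^q\sigma$ charges no set of zero capacity (since $\sigma\ll\mu$) and has finite energy integral, because $\int_M G^{(\alpha)}(\nu)\,d\nu\le\lambda^{2q}c_0^{-1}\int_M m^{q+1}\,d\sigma<\infty$. Hence its Green potential $w:=G^{(\alpha)}(v_0^q\sigma)$ lies in $\mathcal{F}^{(\alpha)}_e$ and satisfies $\mathcal{E}^{(\alpha)}(w,\varphi)=\int_M v_0^q\varphi\,d\sigma$ for all $\varphi\in\mathcal{F}^{(\alpha)}\cap C_c(M)$. Since $w\le v_0$ we have $w^q\le v_0^q$, and $w>0$ everywhere because $\nu\ne 0$ and $G^{(\alpha)}>0$; therefore $\mathcal{E}^{(\alpha)}(w,\varphi)=\int_M v_0^q\varphi\,d\sigma\ge\int_M w^q\varphi\,d\sigma$ for $\varphi\ge0$, so $w$ is a positive $\mathcal{F}^{(\alpha)}_e$-solution of \eqref{eq-non-local-differential-inequality}.

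For the converse, let $v\in\mathcal{F}^{(\alpha)}_e$, $v\ge0$, $v>0$ $\sigma$-a.e., be a positive $\mathcal{F}^{(\alpha)}_e$-solution of \eqref{eq-non-local-differential-inequality}; then necessarily $v^q\in L^1_{\mathrm{loc}}(\sigma)$, so $\nu:=v^q\sigma$ is a smooth Radon measure not charging polar sets. The aim is to show $v\ge G^{(\alpha)}(v^q\sigma)$ quasi-everywhere. I would write $\nu$ as an increasing limit $\nu_n\uparrow\nu$ of measures of finite energy integral (e.g. $\nu_n=\mathbf 1_{F_n}\nu$ along a nest $\{F_n\}$), put $h_n:=v-G^{(\alpha)}(\nu_n)\in\mathcal{F}^{(\alpha)}_e$, and observe that for $\varphi\ge0$ one has $\mathcal{E}^{(\alpha)}(h_n,\varphi)=\mathcal{E}^{(\alpha)}(v,\varphi)-\int_M\varphi\,d\nu_n\ge\int_M v^q\varphi\,d\sigma-\int_M\varphi\,d\nu_n\ge0$, since $\nu_n\le\nu$. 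Taking $\varphi=h_n^-$ and using the Markovian inequality $\mathcal{E}^{(\alpha)}(h_n,h_n^-)\le-\mathcal{E}^{(\alpha)}(h_n^-,h_n^-)$ forces $\mathcal{E}^{(\alpha)}(h_n^-,h_n^-)=0$, hence $h_n^-=0$ q.e. by transience, i.e. $v\ge G^{(\alpha)}(\nu_n)$ q.e. Letting $n\to\infty$ and using monotone convergence, $v\ge w:=G^{(\alpha)}(v^q\sigma)$ q.e.; in particular $w<\infty$ $\sigma$-a.e., and $v^q\ge w^q$ $\sigma$-a.e. (as $\sigma\ll\mu$), whence $w(x)=\int_M G^{(\alpha)}(x,y)v^q(y)\,d\sigma(y)\ge\int_M G^{(\alpha)}(x,y)w^q(y)\,d\sigma(y)$ for every $x$. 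Being a Green potential of a nonzero positive measure, $w$ is lower semicontinuous and everywhere positive, so $w$ is a positive solution to \eqref{eq-non-local-integral-inequality} in the sense of Definition~\ref{defi-int-ineq}.

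The step I expect to be the main obstacle is the use of $h_n^-$ as a test function: the hypothesis only supplies the variational inequality for $\varphi\in\mathcal{F}^{(\alpha)}\cap C_c(M)$, whereas $h_n^-\in\mathcal{F}^{(\alpha)}_e$ is in general neither bounded, nor continuous, nor compactly supported. The plan is to enlarge the admissible test class in two stages. First, extend to $\varphi\in\mathcal{F}^{(\alpha)}\cap L^\infty$ with compact support and $\varphi\ge0$: approximate $\varphi$ in $\mathcal{F}^{(\alpha)}$-norm by $\varphi_k\in\mathcal{F}^{(\alpha)}\cap C_c(M)$ with $0\le\varphi_k\le\|\varphi\|_\infty$ and supports in a fixed compact set, so that $\mathcal{E}^{(\alpha)}(v,\varphi_k)\to\mathcal{E}^{(\alpha)}(v,\varphi)$ while $\int_M v^q\varphi_k\,d\sigma\to\int_M v^q\varphi\,d\sigma$ by dominated convergence, using $v^q\in L^1_{\mathrm{loc}}(\sigma)$ and the continuity of the density $\theta$. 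Second, extend to arbitrary $0\le\varphi\in\mathcal{F}^{(\alpha)}_e$ via monotone truncation, using that bounded, compactly supported truncations of an element of $\mathcal{F}^{(\alpha)}_e$ converge to it in the extended Dirichlet norm, together with monotone convergence on the right-hand side. This is the place where absolute continuity of $\sigma$ with a continuous density genuinely enters, beyond its role in guaranteeing that $\nu=v^q\sigma$ is smooth and charges no polar set.
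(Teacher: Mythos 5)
Your overall architecture is sound, and the first half is in fact leaner than the paper's. For ``integral $\Rightarrow$ differential'' you take $w=G^{(\alpha)}_{\sigma}(v_0^q)$ with $v_0=\lambda m$ and read the supersolution property off $w\le v_0$; the paper instead runs an iteration $v_{k+1}=G^{(\alpha)}(v_k^q\theta)+G^{(\alpha)}(\eta\theta)$ to get an exact identity, and it invokes Theorem \ref{thm-equiv-integrability} for $m\in L^q(\sigma)$, whereas your evaluation of item (4) of Theorem \ref{thm-equiv-minimal} at $x=o$ gives $\int_M m^{q+1}\,d\sigma<\infty$ directly. Both give the finite-energy input needed for \cite[Theorem 1.5.4]{FOT} (Theorem \ref{thm-df}); just phrase it through the density $v_0^q\theta$ (as the paper does), so that the element of $\mathcal{F}^{(\alpha)}_e$ you produce is literally the kernel potential — ``finite energy integral of the measure'' alone gives an abstract potential that must still be identified with the kernel integral. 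For the converse you take a genuinely different route: the paper applies \cite[Lemma 2.2.10]{FOT} (Proposition \ref{prop-weak}) to obtain a representing measure $\nu\ge v^q\sigma$ and then identifies $v=G^{(\alpha)}_{\nu}1$ $\mu$-a.e.\ by testing against $hg$ with $g$ a reference function, while you prove $v\ge G^{(\alpha)}_{\sigma}(v^q)$ by an energy/maximum-principle argument (test function $h_n^-$, nest decomposition of the smooth measure $v^q\sigma$). That scheme is viable: $\mathcal{E}^{(\alpha)}(h^+,h^-)\le 0$ holds on $\mathcal{F}^{(\alpha)}_e$ (from $\mathcal{E}^{(\alpha)}(|h|,|h|)\le\mathcal{E}^{(\alpha)}(h,h)$), transience makes $\mathcal{E}^{(\alpha)}$ a norm, and the monotone limit over the nest recovers the pointwise kernel inequality, after which the verification of Definition \ref{defi-int-ineq} goes as in the paper.

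The genuine soft spot is exactly the one you flagged, and your proposed fix is not correct as stated. Stage 2 rests on the claim that ``bounded, compactly supported truncations of an element of $\mathcal{F}^{(\alpha)}_e$ converge to it in the extended Dirichlet norm'': no truncation in the range produces compact support, and multiplication by cutoffs $\chi_R$ is not an $\mathcal{E}^{(\alpha)}$-contraction — for a general regular Dirichlet form the energies $\mathcal{E}^{(\alpha)}(\varphi\chi_R,\varphi\chi_R)$ need not even stay bounded, so strong convergence is not a standard fact you can quote. Two correct repairs: (i) follow the paper and use Proposition \ref{prop-weak} to get a Radon measure $\nu_0$ of finite energy integral with $\mathcal{E}^{(\alpha)}(v,\varphi)=\int\tilde{\varphi}\,d\nu_0$ for all $\varphi\in\mathcal{F}^{(\alpha)}_e$, then the core argument gives $\nu_0\ge v^q\sigma$, which yields your inequality for every nonnegative $\varphi\in\mathcal{F}^{(\alpha)}_e$ at once (quasi-continuous versions agree $\mu$-a.e.\ and $\sigma\ll\mu$); or (ii) stay elementary: $\mathcal{F}^{(\alpha)}\cap C_c(M)$ is $\mathcal{E}^{(\alpha)}$-dense in $\mathcal{F}^{(\alpha)}_e$ in the transient case, so take $\psi_k\to\varphi$ strongly, replace $\psi_k$ by $\psi_k^+\in\mathcal{F}^{(\alpha)}\cap C_c(M)$ (a normal contraction, so energies remain bounded), use that bounded energy plus $\mu$-a.e.\ convergence gives weak convergence in $(\mathcal{F}^{(\alpha)}_e,\mathcal{E}^{(\alpha)})$, hence $\mathcal{E}^{(\alpha)}(v,\psi_k^+)\to\mathcal{E}^{(\alpha)}(v,\varphi)$, and apply Fatou to $\int_M v^q\psi_k^+\,d\sigma$. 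Separately, record the identification of the abstract potential of $\nu_n=\mathbf{1}_{F_n}v^q\sigma$ with the kernel potential $G^{(\alpha)}_{\nu_n}1$ ($\mu$-a.e.), e.g.\ by applying \cite[Theorem 1.5.4]{FOT} to the densities $\mathbf{1}_{F_n}v^q\theta$ (or avoid nests altogether by truncating the density against a multiple of the reference function, which automatically has finite energy); without this step the passage from $h_n^-=0$ to the pointwise inequality $v\ge G^{(\alpha)}_{\sigma}(v^q)$ is not justified.
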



The regularity is expected to be improved further, ideally to smooth strong solutions. We manage to do this when $\sigma=\mu$, under an additional assumption that the heat semigroup $\{P_t\}_{t\ge 0}$ associated with $\Delta$  is Feller.   This means that $\{P_t\}_{t\ge 0}$ induces a strongly continuous semigroup on the Banach space $C_{\infty}(M)$, the completion of $C_c(M)$ with respect to the $\norm{\cdot}_{\sup}$-norm(see \cite{Levy}). We denote this semigroup on $C_{\infty}(M)$ by $\{P_t\}_{t\ge 0}$ as well, since they are both determined by the heat kernel  $\{p_{t}(x,y)\}_{t> 0}$. Fix some $\alpha\in (0,1)$ and assume that the fractional Laplacian $	(-\Delta)^{\alpha}$ (on $L^2(M, \mu)$) is transient. The subordination construction applied to  $\{P_t\}_{t\ge 0}$ on $C_{\infty}(M)$ (as a Banach space, cf. \cite[Chapter 6]{Sato} and \cite{Bochner}) leads to a strongly continuous semigroup in the same way as the $L^2(M, \mu)$ setting. The subordinated semigroup on $C_{\infty}(M)$ is denoted by $\{P_t^{(\alpha)}\}_{t\ge 0}$, with generator $	-(-\Delta)^{\alpha}$ (on $C_{\infty}(M)$). 
\begin{theorem}\label{thm-strong-solution}
\rm
Let $M$ be a complete connected non-compact Riemannian manifold. 
Assume that the fractional Laplacian $	(-\Delta)^{\alpha}$ is transient and that the Green kernel $G^{(\alpha)}$ is quasi-metric. Assume further that the heat semigroup $\{P_t\}_{t\ge 0}$ associated with $\Delta$  is Feller, and that \[\lim_{d(x,o)\rightarrow +\infty}G^{(\alpha)}(x,o)=0,\] for some fixed reference point $o\in M$. Then there exists a positive solution to (\ref{eq-non-local-integral-inequality}),  if and only if there is a positive smooth function $h$ on $M$ that is in the domain of $(-\Delta)^{\alpha}$ (on $C_{\infty}(M)$) and satisfies
\[(-\Delta)^{\alpha}h\ge h^q.\]
\end{theorem}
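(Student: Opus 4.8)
The plan is to use Theorem \ref{thm-equiv-minimal} as the bridge, so that the whole problem reduces to promoting the regularity of the "minimal" solution $m = m_{a,o}$ to that of a genuine smooth strong super-solution in the domain of $(-\Delta)^\alpha$ on $C_\infty(M)$. The implication from the strong solution to the integral inequality is the easy direction: if $h>0$ is smooth, lies in the domain of the generator $-(-\Delta)^\alpha$ on $C_\infty(M)$, and satisfies $(-\Delta)^\alpha h \ge h^q$, then applying the Green operator $G^{(\alpha)}$ (which is the potential operator $\int_0^\infty P_t^{(\alpha)}\,dt$ for the transient subordinated semigroup on $C_\infty(M)$) to both sides and using $G^{(\alpha)}(-\Delta)^\alpha h = h$ — valid because $h$ is in the domain and $(-\Delta)^\alpha h\ge 0$ so the potential is finite and the fundamental identity of potential theory applies — yields $h(x)\ge \int_M G^{(\alpha)}(x,y)h^q(y)\,d\sigma(y)$ with $\sigma=\mu$. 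Since $h$ is continuous and strictly positive, it is in particular a lower semi-continuous positive solution in the sense of Definition \ref{defi-int-ineq}, giving existence of a positive solution to \eqref{eq-non-local-integral-inequality}.

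For the converse, assume a positive solution to \eqref{eq-non-local-integral-inequality} exists. By Theorem \ref{thm-equiv-minimal}, there is $a>0$ and a constant $C_0>0$ with $m(x)\ge C_0\int_M G^{(\alpha)}(x,y)m^q(y)\,d\mu(y)$ for all $x\in M$; rescaling $m$ by a constant (which preserves the inequality after adjusting $a$, since $q>1$) we may take $C_0=1$. The assumption $\lim_{d(x,o)\to\infty}G^{(\alpha)}(x,o)=0$ together with lower semi-continuity of $G^{(\alpha)}(\cdot,o)$ and finiteness off the diagonal shows that $m = G^{(\alpha)}(\cdot,o)\wedge a^{-1}$ is bounded, vanishes at infinity, hence $m\in C_\infty(M)$ away from $o$; more precisely $m$ itself need not be continuous at $o$, but the truncation $m\le a^{-1}$ keeps it bounded, and the point is that the potential $u_0(x):=\int_M G^{(\alpha)}(x,y)m^q(y)\,d\mu(y)$ is, by the Feller property of $\{P_t^{(\alpha)}\}$ on $C_\infty(M)$ and standard resolvent/potential-theoretic regularity, a continuous function vanishing at infinity (indeed $u_0 = G^{(\alpha)}(m^q)$ with $m^q\in L^1(\mu)\cap L^\infty(\mu)$, and the Feller resolvent maps such functions into $C_\infty(M)$). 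The key structural fact is $u_0\le m$, so $u_0$ is itself a bounded, continuous, strictly positive (by the strict positivity of $G^{(\alpha)}$) super-solution: $u_0 = G^{(\alpha)}(m^q)\ge G^{(\alpha)}(u_0^q)$. Iterating once more, $v_1 := G^{(\alpha)}(u_0^q)$ is again in $C_\infty(M)$, and $u_0\ge v_1\ge G^{(\alpha)}(v_1^q)$; the decreasing iteration $v_{k+1}=G^{(\alpha)}(v_k^q)$ stabilizes at a fixed point or one simply works with $u_0$ directly.

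The final and most delicate step is to upgrade $u_0$ — continuous, bounded, in $C_\infty(M)$, and satisfying $u_0\ge G^{(\alpha)}(u_0^q)$ — to a function $h$ that is genuinely in the domain of the generator $(-\Delta)^\alpha$ on $C_\infty(M)$, is smooth, and satisfies the pointwise differential inequality $(-\Delta)^\alpha h\ge h^q$. I would set $h = G^{(\alpha)}(f)$ where $f := u_0^q\in C_\infty(M)$ is continuous, bounded and non-negative; since $\{P_t^{(\alpha)}\}$ is Feller and transient, the potential $G^{(\alpha)} = \int_0^\infty P_t^{(\alpha)}\,dt$ of a bounded continuous $L^1$ function lies in the domain of the generator on $C_\infty(M)$ with $(-\Delta)^\alpha h = f = u_0^q \ge h^q$, the last inequality because $h=G^{(\alpha)}(u_0^q)=u_0$-is-wrong — rather $h = G^{(\alpha)}(u_0^q)\le u_0$, hence $h^q\le u_0^q = (-\Delta)^\alpha h$. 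This gives the differential inequality. Smoothness of $h$ then follows from elliptic/parabolic regularity for the subordinated operator: since $(-\Delta)^\alpha h = u_0^q$ is continuous and $h\in \mathrm{Dom}((-\Delta)^\alpha)$, bootstrapping via the smoothing of $P_t^{(\alpha)}$ on the underlying manifold (the heat kernel $p_t$ is smooth, hence so is $p_t^{(\alpha)}$, and convolution against a continuous function produces a smooth potential after accounting for the Green kernel's singularity structure) yields $h\in C^\infty(M)$. The main obstacle is precisely this last regularity bootstrap: one must be careful that $u_0^q$ — though continuous — has enough integrability/regularity near $o$ (where $m$, and hence potentially $u_0$, may be large but is still bounded by construction) for the potential $G^{(\alpha)}(u_0^q)$ to be smooth; the boundedness afforded by the truncation $a^{-1}$ in the definition of $m$ is exactly what makes $u_0^q$ bounded and continuous, so the standard smoothing theory for the subordinated semigroup applies, but this needs to be spelled out using the properties of $p_t^{(\alpha)}$ established in Section \ref{sect-frac-Lap}.
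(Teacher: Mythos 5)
Your easy direction is essentially the paper's argument, except for one imprecision: you assert the exact identity $G^{(\alpha)}\bigl((-\Delta)^{\alpha}h\bigr)=h$, which would require $P_t^{(\alpha)}h\to 0$; what actually follows from the semigroup fundamental theorem is $G^{(\alpha)}\bigl((-\Delta)^{\alpha}h\bigr)=\lim_{t\to\infty}\bigl(h-P_t^{(\alpha)}h\bigr)\le h$, and this inequality is all one needs (and all the paper proves). That part is repairable in one line.

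The converse direction, however, has a genuine gap at exactly the point you flag as ``the most delicate step'', and the way you propose to close it is the very thing the authors say is unavailable. You need two unproven facts: (i) that $u_0=G^{(\alpha)}(m^q)$ lies in $C_{\infty}(M)$ (``standard resolvent/potential-theoretic regularity'' does not apply: $G^{(\alpha)}$ is the $0$-order potential operator, not a $\lambda$-resolvent of the Feller semigroup, and it does not automatically map $L^1\cap L^{\infty}$, or even $C_{\infty}$, into $C_{\infty}(M)$); and (ii) that $h=G^{(\alpha)}(f)$ with $f$ bounded, continuous and integrable belongs to the domain of the generator $(-\Delta)^{\alpha}$ on $C_{\infty}(M)$ with $(-\Delta)^{\alpha}h=f$ — for a Feller semigroup this requires, e.g., uniform convergence of $\int_0^t P_s^{(\alpha)}f\,ds$ in the $C_{\infty}$-norm, which you do not establish. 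Likewise, smoothness of $h$ via ``elliptic/parabolic regularity for the subordinated operator'' is precisely the regularity theory for $(-\Delta)^{\alpha}$ on manifolds whose absence the authors explicitly lament; it cannot be invoked as standard. The paper circumvents all of this differently: it starts not from the inequality $m\gtrsim G^{(\alpha)}(m^q)$ but from the exact identity $v=G^{(\alpha)}(v^q)+G^{(\alpha)}(\eta)$ produced in the proof of Theorem \ref{thm-equivalence} (an identity is essential, since $(-\Delta)^{\alpha}$ is not order-preserving and cannot be applied to both sides of an inequality); it then mollifies every term by $\int_1^2 P_t(\cdot)\,dt$ using the classical heat semigroup, so that each term, being dominated by a fixed $w\in C_{\infty}(M)$ (here the decay of $G^{(\alpha)}(\cdot,o)$ enters), lands in $\mathcal{D}(\Delta)\cap C^{\infty}(M)$; it uses $\mathcal{D}(\Delta)\subseteq\mathcal{D}((-\Delta)^{\alpha})$ on $C_{\infty}(M)$ (Sato) and a lemma asserting $(-\Delta)^{\alpha}(G^{(\alpha)}f)=f$ \emph{once both} $f$ and $G^{(\alpha)}f$ are already known to be in the domain; finally $h=\int_1^2 P_t(v)\,dt$ works because $\int_1^2 P_t(v^q)\,dt\ge\bigl(\int_1^2 P_t(v)\,dt\bigr)^q$ by Jensen. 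Without an argument replacing the mollification step, your construction of $h=G^{(\alpha)}(u_0^q)$ does not yield membership in $\mathcal{D}((-\Delta)^{\alpha})$ on $C_{\infty}(M)$, nor smoothness, so the core of the ``only if'' direction remains open in your proposal.
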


\begin{remark}\rm
The additional conditions above are somehow indirect and restrictive. Currently we have to work in this ad hoc way due to lack of a good reference for the regularity of fractional Laplacian on manifolds. In any case, under conditions (VD) and (PI), the Feller property and the asymptotic decay of $G^{(\alpha)}(\cdot,o)$ both hold (see \cite[Theorem 7.1]{Pigola-Setti}). Thus in Theorem \ref{thm-main-mu} the solution can be understood as a smooth one.
\end{remark}

The paper is organized as follows. In Section \ref{sect-frac-Lap}, we introduce the fractional Laplacian on manifolds and show some basic facts about the corresponding Green function. Section \ref{sect-potential} is devoted to a summary of the  potential theoretic tools, the proofs of which are postponed to Appendix \ref{appendix-potential}. 
Theorem \ref{thm-equiv-minimal} is proved in Section \ref{sect-equivalence}. In Sections \ref{sect-equiv-integrability} and \ref{sect-proofs-vol}, we finish the proofs of our main result Theorem \ref{thm-main} and its consequences, Theorem \ref{thm-main-mu} and Theorem \ref{thm-Henon}, based on the preparations from Theorem \ref{thm-equiv-integrability}. The issue of lifting regularity of solutions is addressed in Section \ref{sect-solutions} and Section \ref{sect-strong-solutions}. In Appendix \ref{appendix-DF}, we collect some basic notions and facts about Dirichlet forms, necessary for Section \ref{sect-solutions}.
\section{The fractional Laplacian on manifolds}
\label{sect-frac-Lap}
The fractional Laplacian has attracted an increasing amount of attention in the past decade, see the monograph \cite{Chen-book} of Chen, Li and Ma for a recent survey and further references. In the Euclidean setting, there are various ways to define the fractional Laplacian,  see the original work of Molcanov and Ostrovskii \cite{MO} from the probability theory side, and Caffarelli and Silvestre \cite{CS} from the PDE side. See also  Kwa\'{s}nicki \cite{MK} which explains the construction from various perspectives. 
On non-compact manifolds, the construction of fractional Laplacian has also been studied  through extention method by Banica,  Gonzal\'{e}z, and  S\'{a}ez in \cite{BGS},
and integral representation method by Alonso-Or\'{a}n, C\'{o}rdoba, and Mart\'{i}nez in \cite{ACD}.

For our general setting, it is most convenient to construct the fractional Laplacian via the classical subordination theory of Bochner \cite{Bochner} (see also \cite[Section 32]{Sato} and \cite[Chapter 13]{Bernstein}). 
Note that the subordination approach has also been taken by del Teso,  G\'{o}mez-Castro, and V\'{a}zquez in \cite{TGV} for non-local operators. Here the kernel point of view is stressed on, to be better adapted to the potential theoretic approach. 

Let $(M, g)$ be a complete connected non-compact Riemannian manifold with Riemannian measure $\mu$. Consider the Laplace-Beltrami operator $\Delta=\Div\cdot\nabla$ on $C_c^{\infty}(M)$. It is well known that $\Delta$ is essentially self-adjoint on $L^2(M, \mu)$, and we denote its unique self-adjoint extension by $\Delta$ as well (see \cite[Chapters 3 and 4]{G} for more information). 

Consider the spectral representation of $-\Delta$:
\[-\Delta=\int_{0}^{\infty} \lambda d E_{\lambda},\]
where $\{E_{\lambda}\}_{\lambda\ge 0}$ is the projection operator valued measure associated with $-\Delta$. The heat semigroup generated by $\Delta$,  $P_t =e^{t\Delta}$, can then be represented as
\[P_t=\int_{0}^{\infty} e^{-t\lambda} d E_{\lambda}. \]
The heat semigroup admits positive smooth densities $\{p_{t}(x,y)\}_{t> 0}$, namely
\[P_t f(x)=\int_M p_t(x,y) f(y) d\mu(y), ~~~\forall t>0, x\in M,\]
which is valid for $f\in L^2(M,\mu)$. 
The Green operator corresponding to $-\Delta$ is defined by
\[G=\int_{0}^{\infty}P_t d t,\]
with integral density \[G(x,y)=\int_{0}^{\infty}p_t(x,y) dt,\]
which is possibly infinite  everywhere.

Fix $\alpha\in (0,1)$, we can define the fractional Laplacian $(-\Delta)^{\alpha}$ as
\begin{equation}\label{frac-lap}
	(-\Delta)^{\alpha}=\int_{0}^{\infty} \lambda^{\alpha} d E_{\lambda}.
\end{equation}
By subordination theory, $(-\Delta)^{\alpha}$ is self-adjoint and generates another semigroup, which we denote by $\{P_t^{(\alpha)}\}_{t\ge 0}$. 

	Denote by $\{\eta^{(\alpha)}_{t}(\cdot)\}_{t\ge 0}$ the convolution semigroup associated with the Bernstein function $\lambda\mapsto \lambda^{\alpha}$ via Laplace transform, that is
\[e^{-t \lambda^{\alpha}}=\int_{0}^{\infty} e^{-s\lambda} \eta^{(\alpha)}_{t}(s) ds,\] see \cite{Bernstein, Sato}.
Then the new semigroup $\{P_t^{(\alpha)}\}_{t\ge 0}$ can  be represented in terms of the original one as
\[P_t^{(\alpha)}=	\exp\left(-t(-\Delta)^{\alpha}\right)=\int_{0}^{\infty} \exp\left(-t \lambda^{\alpha}\right) d E_{\lambda}=\int_{0}^{\infty} \eta^{(\alpha)}_{t}(s)  P_s ds.\]
The following is clear by Laplace transform (cf. \cite[page 97]{Bogdan-etc}):
	\[\int_{0}^{\infty}\eta^{(\alpha)}_{t}(s) dt=\frac{1}{\Gamma(\alpha
		)}s^{\alpha -1}.\]
%

	Note that the Green operator $G^{(\alpha)}$ formally satisfies
\[G^{(\alpha)}=\int_{0}^{\infty} P_t^{(\alpha)} dt=	\int_{0}^{\infty}\int_{0}^{\infty} \eta^{(\alpha)}_{t}(s)  P_s ds dt=\frac{1}{\Gamma(\alpha
	)}\int_{0}^{\infty} s^{\alpha -1} P_s ds.\]
This motivates the following definition.
\begin{definition}\label{defi-Green-alpha}\rm
	The Green kernel $G^{(\alpha)}(\cdot, \cdot)$ associated with the fractional Laplacian $	(-\Delta)^{\alpha}$ is defined as
	\begin{equation}\label{eq-Green-density}
		G^{(\alpha)}(x,y)=\frac{1}{\Gamma(\alpha
			)}\int_{0}^{\infty} s^{\alpha -1} p_s(x,y) ds,~~~~x,y\in M.
	\end{equation}
	We say that the fractional Laplacian $	(-\Delta)^{\alpha}$ is transient if there exists a non-negative measurable function $f$ on $M$ with $\mu\left(\{x\in M: f(x)>0\}\right)>0$ such that
	\begin{equation}\label{eq-local-int}
		\int_{M} G^{(\alpha)}(x, y) f(y) d\mu(y)<+\infty,
	\end{equation}
	for  $\mu$-a.e. 
	$x\in M$.
\end{definition}
\begin{remark}\rm
	The definition of transience comes from probabilistic considerations (see \cite[Definition 2.1.1]{Chen-Fuku} and  \cite[Lemma 1.6.4]{FOT}). Note that the heat kernel is  everywhere positive, and hence the corresponding semigroup (or equivalently, the Markov process) is irreducible.	
\end{remark}

Now we turn to concrete properties of the Green kernel.
\begin{proposition}\label{prop-lsc}\rm
	Let $(M, g)$ be a complete connected non-compact Riemannian manifold. 
	Then $G^{(\alpha)}(\cdot,\cdot): X\times X\rightarrow [0,\infty]$ is lower semi-continuous.
\end{proposition}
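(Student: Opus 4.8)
The plan is to show lower semicontinuity of $G^{(\alpha)}(x,y)=\frac{1}{\Gamma(\alpha)}\int_0^\infty s^{\alpha-1}p_s(x,y)\,ds$ as a function on $M\times M$ by exploiting the joint continuity and positivity of the underlying heat kernel $(s,x,y)\mapsto p_s(x,y)$ on $(0,\infty)\times M\times M$, together with Fatou's lemma. The key point is that the integrand is nonnegative and that, for each fixed $s>0$, the map $(x,y)\mapsto s^{\alpha-1}p_s(x,y)$ is continuous (indeed smooth) on $M\times M$; so $G^{(\alpha)}$ is a ``pointwise $\liminf$-stable'' integral of continuous functions.

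First I would fix a point $(x_0,y_0)\in M\times M$ and a sequence $(x_k,y_k)\to(x_0,y_0)$, and set $f_k(s)=\frac{1}{\Gamma(\alpha)}s^{\alpha-1}p_s(x_k,y_k)$ and $f_0(s)=\frac{1}{\Gamma(\alpha)}s^{\alpha-1}p_s(x_0,y_0)$. Since the heat kernel is jointly continuous in $(s,x,y)$, we have $f_k(s)\to f_0(s)$ for every $s\in(0,\infty)$, hence certainly $\liminf_k f_k(s)\ge f_0(s)$ (with equality). Because each $f_k\ge 0$, Fatou's lemma gives
\[
G^{(\alpha)}(x_0,y_0)=\int_0^\infty f_0(s)\,ds=\int_0^\infty \liminf_{k\to\infty} f_k(s)\,ds\le \liminf_{k\to\infty}\int_0^\infty f_k(s)\,ds=\liminf_{k\to\infty} G^{(\alpha)}(x_k,y_k),
\]
which is exactly lower semicontinuity at $(x_0,y_0)$. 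Since $(x_0,y_0)$ was arbitrary, $G^{(\alpha)}$ is lower semicontinuous on $M\times M$, with values in $[0,\infty]$ (the integral may diverge, which is harmless since lower semicontinuity allows the value $+\infty$).

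The only real content to justify is the joint continuity of $(s,x,y)\mapsto p_s(x,y)$ on $(0,\infty)\times M\times M$; this is standard for the heat kernel of the Laplace–Beltrami operator on a complete manifold (e.g.\ from local parabolic regularity, or from \cite[Chapters 3 and 4]{G}), and in fact $p_s(x,y)$ is smooth there. One should also record that $p_s(x,y)\ge 0$ for all $s,x,y$, which is immediate since the heat semigroup is positivity preserving. I do not anticipate a genuine obstacle here: the statement is soft, relying only on Fatou's lemma and known regularity of $p_s$; the mild subtlety is simply to phrase it with sequences (or nets) and to note that no domination is needed because we only want the lower semicontinuity direction, not continuity.
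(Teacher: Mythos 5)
Your proof is correct. It reaches the same soft conclusion as the paper but by a slightly different mechanism: the paper writes $G^{(\alpha)}(x,y)$ as the monotone increasing limit of the truncated integrals $\frac{1}{\Gamma(\alpha)}\int_{1/N}^{N}s^{\alpha-1}p_s(x,y)\,ds$, each of which is continuous in $(x,y)$, and invokes the fact that an increasing limit of continuous functions is lower semi-continuous; you instead apply Fatou's lemma directly to the full integral along a sequence $(x_k,y_k)\to(x_0,y_0)$. The two routes use the same inputs (nonnegativity of the integrand and continuity of the heat kernel), but your Fatou argument only needs continuity of $(x,y)\mapsto p_s(x,y)$ for each fixed $s$, whereas the paper's truncation argument implicitly also requires knowing that the integrals over the compact intervals $[1/N,N]$ are continuous in $(x,y)$ (which in turn rests on locally uniform control of $p_s$ in $s$, available from joint continuity on compacts). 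Since $M\times M$ is metrizable, checking lower semicontinuity along sequences, as you do, is legitimate, and allowing the value $+\infty$ causes no difficulty; so there is no gap, only a marginally more economical use of the heat kernel's regularity.
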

\begin{proof}
	Note that
	\begin{align*}
		G^{(\alpha)}(x,y) =&\frac{1}{\Gamma(\alpha
			)}\int_{0}^{\infty} s^{\alpha -1} p_s(x,y) ds \\
		=&\lim_{N\rightarrow +\infty}\frac{1}{\Gamma(\alpha
			)}\int_{1/N}^{N} s^{\alpha -1} p_s(x,y) ds
	\end{align*}
	is the monotone increasing limit of a sequence of functions continuous in $(x,y)$. It follows that $G^{(\alpha)}(\cdot,\cdot)$ is lower semi-continuous.	
\end{proof}
As mentioned in the Introduction, for a complete connected non-compact Riemannian manifold $(M, g)$,  the combination of (VD) and (PI) is  equivalent to the
following Li-Yau Gaussian type heat kernel estimates:
\[\frac{1}{\mu(B(x,\sqrt{t}))}e^{-\frac{d^2(x,y)}{c_2t}}\lesssim p_t(x,y)\lesssim \frac{1}{\mu(B(x,\sqrt{t}))}e^{-\frac{d^2(x,y)}{c_1t}}.\]
This naturally leads to estimates on the Green kernel.
\begin{proposition}\label{prop-g-alph}\rm
	Let $(M, g)$ be a complete connected non-compact Riemannian manifold satisfying conditions (VD) and (PI). Then the Green kernel for $(-\Delta)^{\alpha} $ satisfies
	\begin{equation}\label{G-alpha-app}
		G^{(\alpha)}(x,y)\asymp
		\int_{d(x,y)}^{+\infty }\frac{t^{2\alpha-1}d t}{\mu (B(x,t))},\quad x,y\in
		M.
	\end{equation}
	Moreover,  the fractional Laplacian $(-\Delta)^{\alpha} $ is transient if and only if for some $x_0\in M$,	
	\begin{equation}\label{eq-int-finite}
		\int_{1}^{+\infty }\frac{t^{2\alpha-1}d t}{\mu (B(x_0,t))}<\infty. 
	\end{equation}
\end{proposition}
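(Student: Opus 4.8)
The plan is to obtain the two-sided bound \eqref{G-alpha-app} by feeding the Li–Yau heat kernel estimate into the subordination formula \eqref{eq-Green-density}, and then to read off the transience dichotomy directly from \eqref{G-alpha-app}. Since, as recalled above (after Grigor'yan and Saloff-Coste), conditions (VD) and (PI) are equivalent to the estimate $p_t(x,y)\asymp \mu(B(x,\sqrt t))^{-1}\exp(-d^2(x,y)/(ct))$ (with a larger constant $c$ in the lower bound), the substitution $s=t^{2}$ in \eqref{eq-Green-density} gives
\[G^{(\alpha)}(x,y)\asymp \int_{0}^{\infty}\frac{t^{2\alpha-1}}{\mu(B(x,t))}\exp\Bigl(-\frac{d^2(x,y)}{ct^{2}}\Bigr)\,dt.\]
Writing $r=d(x,y)$, the lower bound in \eqref{G-alpha-app} is immediate: restricting the integral to $t\ge r$ makes the exponential at least $e^{-1/c}$, so the right-hand side is $\gtrsim\int_{r}^{\infty}t^{2\alpha-1}\mu(B(x,t))^{-1}\,dt$.

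For the upper bound I would split the $t$-integral at $t=r$. On $[r,\infty)$ the exponential is at most $1$, so that part is already dominated by $\int_{r}^{\infty}t^{2\alpha-1}\mu(B(x,t))^{-1}\,dt$. On $(0,r)$ I would use (VD) in the form $\mu(B(x,r))\lesssim (r/t)^{N}\mu(B(x,t))$ for $0<t\le r$, with $N$ depending only on the doubling constant; inserting the resulting bound $\mu(B(x,t))^{-1}\lesssim (r/t)^{N}\mu(B(x,r))^{-1}$ and rescaling $t=ru$ turns $\int_{0}^{r}t^{2\alpha-1}\mu(B(x,t))^{-1}e^{-r^{2}/(ct^{2})}\,dt$ into $\mu(B(x,r))^{-1}r^{2\alpha}\int_{0}^{1}u^{2\alpha-1-N}e^{-1/(cu^{2})}\,du$, and the last integral is a finite constant because the Gaussian factor kills the singularity at $u=0$. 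Hence the $(0,r)$ piece is $\lesssim r^{2\alpha}\mu(B(x,r))^{-1}$. To absorb this into the tail, I would use (VD) once more: $\int_{r}^{\infty}t^{2\alpha-1}\mu(B(x,t))^{-1}\,dt\ge \int_{r}^{2r}t^{2\alpha-1}\mu(B(x,2r))^{-1}\,dt\gtrsim r^{2\alpha}\mu(B(x,2r))^{-1}\gtrsim r^{2\alpha}\mu(B(x,r))^{-1}$, which completes the proof of \eqref{G-alpha-app}.

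For the transience statement I would first note that the finiteness of $\int_{1}^{\infty}t^{2\alpha-1}\mu(B(x_0,t))^{-1}\,dt$ does not depend on the base point: for $t\ge\max(1,d(x_0,x_1))$ one has $B(x_1,t)\subset B(x_0,2t)$ and symmetrically, so $\mu(B(x_0,t))\asymp\mu(B(x_1,t))$ by (VD) and the two tail integrands are comparable. If \eqref{eq-int-finite} holds, take $f=\mathbf{1}_{B(x_0,1)}$. For $x$ with $d(x,x_0)\ge 2$ and $y\in B(x_0,1)$ one has $d(x,y)\ge\frac12 d(x,x_0)$, and for $t\ge\frac12 d(x,x_0)$ one has $B(x_0,t)\subset B(x,3t)$, so (VD) together with \eqref{G-alpha-app} gives $G^{(\alpha)}(x,y)\lesssim\int_{d(x,x_0)/2}^{\infty}t^{2\alpha-1}\mu(B(x_0,t))^{-1}\,dt\le\int_{1}^{\infty}t^{2\alpha-1}\mu(B(x_0,t))^{-1}\,dt<\infty$; for $x$ in the fixed ball $B(x_0,2)$ one estimates $\int_{B(x_0,1)}G^{(\alpha)}(x,y)\,d\mu(y)$ by splitting the $t$-integral in \eqref{G-alpha-app} at a fixed radius $R$ and applying Fubini to the near part, whose contribution is at most $\int_{0}^{R}t^{2\alpha-1}\,dt<\infty$. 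Thus $Gf<\infty$ everywhere, so $(-\Delta)^{\alpha}$ is transient. Conversely, if \eqref{eq-int-finite} fails, let $f\ge 0$ with $\mu(\{f>0\})>0$; then $f\ge\varepsilon$ on a positive-measure subset $E$ of some ball $B(z_0,\rho)$, and for every $x$, using \eqref{G-alpha-app}, (VD) and base-point independence, $Gf(x)\ge\varepsilon\int_{E}G^{(\alpha)}(x,y)\,d\mu(y)\gtrsim\int_{d(x,z_0)+\rho}^{\infty}t^{2\alpha-1}\mu(B(z_0,t))^{-1}\,dt=\infty$, so $Gf\equiv\infty$ and $(-\Delta)^{\alpha}$ is not transient.

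I expect the main difficulty to be the small-time range $0<t<r$ in the upper bound: one must combine the Gaussian decay of $p_t$ with volume doubling to trade $\mu(B(x,t))^{-1}$ for $\mu(B(x,r))^{-1}$ at the cost of only a polynomial factor, and then recognize $r^{2\alpha}\mu(B(x,r))^{-1}$ as comparable to the tail integral. The remaining steps are routine bookkeeping with volume-doubling inequalities and changes of base point.
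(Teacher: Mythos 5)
Your proposal is correct and follows essentially the same route as the paper: the two-sided bound \eqref{G-alpha-app} is obtained by inserting the Li--Yau estimate into the subordination formula and exploiting volume doubling (the paper simply outsources this computation to \cite[Lemma 5.50]{G06} applied to $F(t)=t^{2-2\alpha}\mu(B(x,t))$), and the transience criterion is derived from \eqref{G-alpha-app} with the same choice $f=\mathbf{1}_{B(x_0,1)}$ and Fubini argument. The only cosmetic differences are that you carry out the Gaussian-tail/doubling computation explicitly and prove the ``transient $\Rightarrow$ \eqref{eq-int-finite}'' direction by contraposition, whereas the paper notes directly that transience forces $G^{(\alpha)}(x_0,y_0)<\infty$ off the diagonal.
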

\begin{proof}
	The proof of (\ref{G-alpha-app}) is essentially the argument of \cite[Lemma 5.50]{G06}  applied to
	\[F(t)=t^{2-2\alpha}\mu(B(x,t)),\]
	whence $F(t)$ satisfies the doubling condition.
	
	According to Definition \ref{defi-Green-alpha}, if the fractional Laplacian $(-\Delta)^{\alpha} $ is transient,  it is clear 
		that $G^{(\alpha)}(x_0, y_0)<+\infty$ for some pair of $x_0\ne y_0 \in M$. By  (\ref{G-alpha-app}), this implies that
		\[\int_{d(x_0, y_0)}^{+\infty }\frac{t^{2\alpha-1}d t}{\mu (B(x_0,t))}<\infty.\]Then (\ref{eq-int-finite}) follows by a simple change of variables.
	
	Suppose that (\ref{eq-int-finite}) holds. 
		For each $x\in M$ and $t\ge 1$, noting that $c(x)\mu(B(x,t))\ge  \mu(B(x, t+d(x,x_0)))\ge \mu(B(x_0, t))$ for some $c(x)>0$, we then obtain for all $x\in M$,
		\begin{align*}
			\int_{1}^{+\infty }\frac{t^{2\alpha-1}d t}{\mu (B(x,t))} \le c(x) \int_{1}^{+\infty }\frac{t^{2\alpha-1}d t}{\mu (B(x_0,t))}<+\infty.
		\end{align*}
		Let $f=1_{B(x_0, 1)}$. For each $x\in M$, we have 
		\begin{align*}
			\int_{M} G^{(\alpha)}(x, y) f(y) d\mu(y)\lesssim & \int_{B(x_0, 1) }	\int_{d(x,y)}^{+\infty }\frac{t^{2\alpha-1}}{\mu (B(x,t))}d t d\mu(y)\\
			= & 	\int_{0}^{+\infty }\frac{\mu(B(x, t)\cap B(x_0, 1))}{\mu (B(x,t))} t^{2\alpha-1}d t\\
			\le &\int_{0}^{1} t^{2\alpha-1}d t+\mu(B(x_0,1))\cdot\int_{1}^{+\infty }\frac{t^{2\alpha-1}}{\mu (B(x,t))}d t\\
			<&+\infty.
		\end{align*}
		Hence the fractional Laplacian $(-\Delta)^{\alpha} $ is transient.
\end{proof}

An important consequence of (\ref{G-alpha-app}) is the quasi-metric property for $G^{(\alpha)}$.
\begin{corollary}\label{g-alp-qm}\rm
	Under the assumptions of Proposition \ref{prop-g-alph},
	there exists a constant $\kappa\geq1$ such that
	\begin{equation}\label{cond-qm-G-alpha}
		G^{(\alpha)}(x,y)\wedge G^{(\alpha)}(y,z)\leq \kappa G^{(\alpha)}(x,z),\quad \mbox{for all $x,y,z\in X$}. \tag{qm}
	\end{equation}
	
\end{corollary}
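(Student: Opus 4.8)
The plan is to transfer everything to the explicit profile
\[
\Phi(x,y)\defeq\int_{d(x,y)}^{+\infty}\frac{t^{2\alpha-1}\,dt}{\mu(B(x,t))},\qquad h(x,s)\defeq\int_{s}^{+\infty}\frac{t^{2\alpha-1}\,dt}{\mu(B(x,t))},
\]
so that $\Phi(x,y)=h(x,d(x,y))$ and, by \eqref{G-alpha-app}, there is a constant $A\ge1$ with $A^{-1}\Phi\le G^{(\alpha)}\le A\Phi$ on $M\times M$. Since the relation $a_1\wedge a_2\le\kappa_0 a_3$ is preserved (up to the factor $A^2$) when each $a_i$ is replaced by a quantity within a factor $A$ of it, it suffices to establish the quasi-metric inequality $\Phi(x,y)\wedge\Phi(y,z)\le\kappa_0\Phi(x,z)$; then \eqref{cond-qm-G-alpha} holds with $\kappa=A^2\kappa_0$. (If $(-\Delta)^{\alpha}$ is not transient all these integrals are $+\infty$ and there is nothing to prove, so one may as well assume transience.)

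I would first record two elementary facts, with constants depending only on $\alpha$ and the doubling constant $C_D$ of $\mu$ furnished by \eqref{D}. \textbf{(a) A one-step doubling estimate} $h(x,s/2)\le C_*\,h(x,s)$: write $h(x,s/2)=h(x,s)+\int_{s/2}^{s}t^{2\alpha-1}\mu(B(x,t))^{-1}\,dt$, bound the last integral from above by $(2\alpha)^{-1}s^{2\alpha}\mu(B(x,s/2))^{-1}$ using monotonicity of $r\mapsto\mu(B(x,r))$, bound $h(x,s)\ge\int_{s}^{2s}t^{2\alpha-1}\mu(B(x,t))^{-1}\,dt\gtrsim s^{2\alpha}\mu(B(x,2s))^{-1}$, and absorb $\mu(B(x,2s))\le C_D^2\mu(B(x,s/2))$ via two applications of \eqref{D}. \textbf{(b) A change-of-center estimate}: if $d(x,y)\le s$ then $h(x,s)\asymp h(y,s)$ with constants $C_D^{\pm1}$, because for $t\ge s$ one has $B(y,t)\subset B(x,2t)$, hence $\mu(B(x,t))^{-1}\le C_D\,\mu(B(y,t))^{-1}$, and one integrates (and symmetrically).

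Now set $R=d(x,z)$, $a=d(x,y)$, $b=d(y,z)$; the triangle inequality gives $R\le a+b$. If $a\ge R/2$, then by monotonicity of $h(x,\cdot)$ and (a), $\Phi(x,y)=h(x,a)\le h(x,R/2)\le C_*h(x,R)=C_*\Phi(x,z)$, whence $\Phi(x,y)\wedge\Phi(y,z)\le C_*\Phi(x,z)$. If instead $a<R/2$, then $b>R/2$ and $a<R$, so by monotonicity, then (a), then (b) (applicable since $d(x,y)=a<R$), $\Phi(y,z)=h(y,b)\le h(y,R/2)\le C_*h(y,R)\le C_*C_D\,h(x,R)=C_*C_D\,\Phi(x,z)$, again bounding the minimum. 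Hence $\kappa_0=C_*C_D$ works, and \eqref{cond-qm-G-alpha} follows with $\kappa=A^2C_*C_D$.

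The only genuinely delicate point is this two-case split. Because $\Phi$ is anchored at its first argument while the triangle inequality controls $d(x,z)$ only through $d(x,y)+d(y,z)$, the distance $d(x,y)$ may a priori be much larger than $d(x,z)$; this is precisely the regime in which one cannot argue from the center $x$ alone and must pass to the center $y$ via step (b). Everything else reduces to routine monotonicity and doubling bookkeeping for the profile $h$.
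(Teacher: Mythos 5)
Your proof is correct and follows essentially the same route as the paper's: both reduce via the two-sided estimate \eqref{G-alpha-app} to the explicit tail-integral profile, and then combine the triangle inequality, doubling of the tail integral in its lower limit, and comparability of the profile under a change of ball center (both consequences of \eqref{D}). The paper organizes this as a single chain after the reduction ``WLOG $d(x,y)\ge d(y,z)$, hence $d(x,z)\le 2d(x,y)$'' rather than your two-case split, but the ingredients and estimates are the same.
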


\begin{proof}
	Without loss of generality, we can assume that $d(x,y)\ge d(y,z)$, whence $d(x,z)\le 2d(x,y)$.
	
		It follows that
		\begin{align*}
			&\min\left\{\int_{d(x,y)}^{+\infty }\frac{t^{2\alpha-1}d t}{\mu (B(y,t))}, \int_{d(y,z)}^{+\infty }\frac{t^{2\alpha-1}d t}{\mu (B(y,t))}\right\}\\	=&\int_{d(x,y)}^{+\infty }\frac{t^{2\alpha-1}d t}{\mu (B(y,t))}\\
			\le &C\int_{d(x,y)}^{+\infty }\frac{t^{2\alpha-1}d t}{\mu (B(x,t))}\\
			\le &C'\int_{2d(x,y)}^{+\infty }\frac{t^{2\alpha-1}d t}{\mu (B(x,t))}\\
			\le &C'\int_{d(x,z)}^{+\infty }\frac{t^{2\alpha-1}d t}{\mu (B(x,t))}.
		\end{align*}
		By (\ref{G-alpha-app}), this  implies (\ref{cond-qm-G-alpha}).
	
\end{proof}	


\section{Potential theoretic tools}\label{sect-potential}
In this section, we collect some necessary tools from potential theory as developed in \cite{GV1, GV2, KV, QV, VW}. Some modifications on the arguments are needed for our slightly different settings. In particular,  we completely avoid the usage of balayage theory. For the sake of completeness, we include these details in Appendix \ref{appendix-potential}.

Let $(X, d)$ be a locally compact, separable metric space. 
Note that $X$ is $\sigma$-compact (see  \cite[3.18.3]{Dieudonne}). 
Denote by $\mathcal{M}^{+}(X)$ the space of Radon measures on $X$.  By the Riesz representation theorem,  $\mathcal{M}^{+}(X)$ coincides with the class of locally finite Borel measures on $X$.  Hence each $\nu\in \mathcal{M}^{+}(X)$ is $\sigma$-finite.

For $\nu\in \mathcal{M}^{+}(X)$ and a locally bounded, non-negative  measurable function $f$, we denote by  $f\nu$ the indefinite integral of $f$ against $\nu$. It is clear that $f\nu\in  \mathcal{M}^{+}(X)$ as well. Let $A\subset X$ be a  measurable set. We say that $\nu\in  \mathcal{M}^{+}(X)$ is concentrated on $A$ if for any non-negative measurable function $g$ on $X$, $\int_X g d\nu=\int_A g d\nu$. For example, in our current setting, each $\nu\in  \mathcal{M}^{+}(X)$ is concentrated on $\text{supp~} \nu$, and  $f\nu$ is concentrated on $\{f>0\}$.

By a kernel on $X$ we mean a  measurable function $K(\cdot, \cdot): X\times X \rightarrow [0,+\infty]$.
For $\nu\in \mathcal{M}^{+}(X)$ and a non-negative measurable function $f$, we denote
\[K_{\nu}f(\cdot) =\int_X K(\cdot, y)f(y)d\nu(y).\] 
{For instance, the integral inequality (\ref{eq-non-local-integral-inequality}) for the Green function $G^{(\alpha)}$ can be written in a more concise way as:
	\[u\ge G^{(\alpha)}_{\sigma}(u^q).\]}
Throughout the article, we set $0\cdot (+\infty)=0$. Note that under this convention \[\lim_{k\rightarrow \infty} a_k \cdot b_k=\lim_{k\rightarrow \infty} a_k \cdot \lim_{k\rightarrow \infty} b_k\] for monotone increasing sequences $\{a_k\}_{k\ge 1}, \{b_k\}_{k\ge 1} $ in $[0,+\infty]$. 

We will make use of the following properties that a kernel $K$  possibly satisfies:
\begin{enumerate}
	\item $K$ is  positive if $K(x,y)\in (0,+\infty]$ for each $x,y\in X$;
	\item $K$ is symmetric if $K(x,y)=K(y,x)$ for each $x,y\in X$;
	\item $K$ is lower semi-continuous if $K$ is so as a function $X\times X\rightarrow[0,+\infty]$;
	\item $K(x,\cdot)$ is $\sigma$-finite with respect to $\nu \in \mathcal{M}^{+}(X)$ if $K(x,\cdot)\nu$ is a $\sigma$-finite measure.
	\item $K(x,\cdot)$ is locally finite with respect to $\nu \in \mathcal{M}^{+}(X)$  if $K(x,\cdot)\in L^{1}_{\text{loc}}(X, \nu)$. Note that in our setting this implies that $K(x,\cdot)$ is $\sigma$-finite  with respect to $\nu$.
		\item $K$ is $\sigma$-finite (locally finite) with respect to $\nu$ if $K(x,\cdot)$ is $\sigma$-finite (locally finite) with respect to $\nu$ for each $x\in X$. 
\end{enumerate}

The class of kernels that satisfy a weak version of maximum principle plays a central role. A series of estimates has been developed for them by
Grigor'yan and Verbitsky, see \cite{GV1, GV2} for details.
\begin{definition}[Weak Maximum Principle]\label{weak-max}\rm
	A kernel $K$ on a locally compact separable metric space $(X, d)$ 
	is said to satisfy the weak maximum principle with constant $b\ge 1$, if for each $\nu \in \mathcal{M}^{+}(X)$ and each Borel set $A\subset X$ such that $\nu$ is concentrated on $A$, we have
	\begin{eqnarray}\label{wm-1}
		K_{\nu}1\leq1\quad\mbox{in $A$}\quad\Rightarrow\quad K_{\nu}1\leq b\quad\mbox{in $X$}.
	\end{eqnarray}
\end{definition}
This property is stable under taking  limit of an increasing sequence of kernels.
\begin{lemma}\label{lem-kernel-limit}\rm
	Let $\{K_n\}_{n\ge 1}$ be an increasing sequence of kernels on a locally compact separable metric space. Suppose that each of them satisfies the weak maximum principle with a common constant $b\ge 1$. Set
	\[K=\lim_{n\rightarrow +\infty} K_n.\] Then $K$ satisfies the weak maximum principle with constant $b\ge 1$.
\end{lemma}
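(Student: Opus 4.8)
The plan is to unwind the definitions and reduce everything to a pointwise statement about the sets on which the hypothesis ``$K_{\nu}1 \le 1$ in $A$'' can hold. Fix $\nu \in \mathcal{M}^{+}(X)$ and a Borel set $A$ on which $\nu$ is concentrated, and suppose $K_{\nu}1 \le 1$ $\nu$-a.e. on $A$ (or everywhere on $A$, depending on the precise reading of \eqref{wm-1} — I would follow whatever convention the paper fixed). The goal is to deduce $K_{\nu}1 \le b$ on all of $X$. The key observation is that $K = \lim_n K_n$ is an increasing limit, so by the monotone convergence theorem $K_{\nu}1(x) = \int_X K(x,y)\,d\nu(y) = \lim_n \int_X K_n(x,y)\,d\nu(y) = \lim_n (K_n)_{\nu}1(x)$ for every $x \in X$, since $\nu$ does not depend on $n$.

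First I would record that monotonicity gives $(K_n)_{\nu}1 \le K_{\nu}1$ for every $n$, so the hypothesis $K_{\nu}1 \le 1$ on $A$ immediately yields $(K_n)_{\nu}1 \le 1$ on $A$ for every $n$. Now each $K_n$ satisfies the weak maximum principle with the same constant $b$: since $\nu$ is concentrated on $A$ and $(K_n)_{\nu}1 \le 1$ on $A$, we conclude $(K_n)_{\nu}1 \le b$ on all of $X$, for every $n$. Then I would let $n \to \infty$: for any $x \in X$,
\[
K_{\nu}1(x) = \lim_{n\to\infty} (K_n)_{\nu}1(x) \le b,
\]
which is exactly the conclusion. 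Finally I would note that $K$ is indeed a kernel, i.e.\ measurable as a function $X\times X \to [0,+\infty]$, being a pointwise increasing limit of measurable functions, so the statement ``$K$ satisfies the weak maximum principle'' is meaningful.

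There is essentially no hard analytic content here; the only points requiring a little care are bookkeeping ones. The main thing to watch is the interchange of limit and integral, which is justified by monotone convergence precisely because the measure $\nu$ is held fixed across the sequence $\{K_n\}$ — this is where the hypothesis that $\{K_n\}$ is \emph{increasing} (rather than merely convergent) is used, and also where the paper's convention $0\cdot(+\infty)=0$ together with the remark that $\lim_k a_k b_k = (\lim_k a_k)(\lim_k b_k)$ for monotone sequences in $[0,+\infty]$ keeps everything consistent even when values are infinite. I would also double-check that the direction of the implication in \eqref{wm-1} is preserved under the reduction: we need $(K_n)_{\nu}1 \le 1$ on $A$ to \emph{follow from} $K_{\nu}1 \le 1$ on $A$, which it does since $(K_n)_{\nu}1 \le K_{\nu}1$; the subtlety that would arise if the inequalities went the other way does not occur. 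So the proof is short, and I expect the write-up to be three or four lines.
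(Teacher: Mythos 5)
Your proposal is correct and follows essentially the same route as the paper's own proof: monotonicity gives $(K_n)_{\nu}1 \le K_{\nu}1 \le 1$ on $A$, the weak maximum principle for each $K_n$ yields $(K_n)_{\nu}1 \le b$ on $X$, and monotone convergence passes the bound to $K_{\nu}1$. Your additional remarks on measurability of $K$ and the fixed measure $\nu$ are harmless bookkeeping that the paper leaves implicit.
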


The following  hereditary property of the weak maximum principle is clear from the definition.
\begin{lemma}\label{lem-kernel-multiple}\rm
	Let $K$ be a kernel on $(X, d)$ satisfying the weak maximum principle with constant $b\ge 1$. Suppose $\eta$ is a 
	non-negative measurable function. Define $\tilde{K}$ by $\tilde{K}(x,y)=K(x,y)\eta(y)$. Then $\tilde{K}$ satisfies the weak maximum principle with constant $b$ as well.
\end{lemma}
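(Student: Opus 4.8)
The plan is to unravel the definition of the weak maximum principle for $\tilde K$ and reduce the required implication to the one already known for $K$ by absorbing the factor $\eta$ into the measure. First I would fix $\nu \in \mathcal{M}^{+}(X)$ and a Borel set $A \subset X$ on which $\nu$ is concentrated, and suppose $\tilde K_\nu 1 \le 1$ on $A$. The key observation is that, by definition of $\tilde K$ and of the operator $K_\nu$, one has $\tilde K_\nu 1(x) = \int_X K(x,y)\eta(y)\,d\nu(y) = \int_X K(x,y)\,d(\eta\nu)(y) = K_{\eta\nu}1(x)$ for every $x \in X$, since $\eta$ is non-negative and measurable, so that $\eta\nu \in \mathcal{M}^{+}(X)$ (as noted in the text, $f\nu \in \mathcal{M}^{+}(X)$ for locally bounded non-negative measurable $f$; if $\eta$ is merely measurable one may first verify $\eta\nu$ is still a well-defined, though possibly not locally finite, non-negative measure — see the obstacle below).

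Next I would check that $\eta\nu$ is concentrated on a Borel set on which the hypothesis of the weak maximum principle can be applied. Since $\nu$ is concentrated on $A$ and $\eta\nu$ is concentrated on $\{\eta > 0\}$, the measure $\eta\nu$ is concentrated on $A' \defeq A \cap \{\eta > 0\}$, which is Borel. Moreover $\eta\nu$ is still concentrated on $A$ itself. The hypothesis $\tilde K_\nu 1 \le 1$ on $A$ now reads $K_{\eta\nu}1 \le 1$ on $A$, i.e.\ on a set carrying $\eta\nu$. Applying the weak maximum principle for $K$ with the measure $\eta\nu$ and the set $A$ yields $K_{\eta\nu}1 \le b$ on all of $X$, which is exactly $\tilde K_\nu 1 \le b$ on $X$. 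Since $\nu$ and $A$ were arbitrary, $\tilde K$ satisfies the weak maximum principle with the same constant $b$.

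The only genuine subtlety — and the step I would be most careful about — is the measure-theoretic status of $\eta\nu$ when $\eta$ is allowed to be an arbitrary non-negative measurable function rather than a locally bounded one: in that generality $\eta\nu$ need not be a Radon (locally finite) measure, so it may fall outside $\mathcal{M}^{+}(X)$ as defined in the text. There are two clean ways around this. Either one reads the lemma with $\eta$ locally bounded (which is the case actually used later, since the relevant $\eta$ will be things like a power of a continuous density), so that $\eta\nu \in \mathcal{M}^{+}(X)$ and the argument above goes through verbatim; or, in the general case, one approximates by $\eta_n = \eta \wedge n$, applies the bounded case to each $\tilde K_n(x,y) = K(x,y)\eta_n(y)$ to get $(\tilde K_n)_\nu 1 \le b$ on $X$, and lets $n \to \infty$, using monotone convergence and the convention $0\cdot(+\infty)=0$ together with Lemma~\ref{lem-kernel-limit} (applied to the increasing sequence $\tilde K_n$) to conclude $\tilde K_\nu 1 \le b$ on $X$. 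Either route is routine; the identity $\tilde K_\nu 1 = K_{\eta\nu}1$ is the whole content of the proof.
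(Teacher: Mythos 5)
Your proposal is correct and follows essentially the same route as the paper: rewrite $\tilde K_\nu 1 = K_{\eta\nu}1$, apply the weak maximum principle for $K$ with the measure $\eta\nu$ (which the paper handles first for bounded $\eta$, exactly the subtlety you flag), and then treat general $\eta$ by truncating to $\eta\wedge n$ and invoking Lemma \ref{lem-kernel-limit}. No gaps.
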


Sometimes we use the following convenient version of weak maximum principle.
\begin{corollary}\rm\label{cor-max-f}
	Let $K$ be a kernel on $(X, d)$ satisfying the weak maximum principle with constant $b\ge 1$. Suppose $f$ is a
	non-negative measurable function. Let $\nu\in \mathcal{M}^{+}(X)$  be concentrated on a Borel set $A\subset X$.
	Then
	\begin{eqnarray}\label{wm-f}
		K_{\nu}f\leq1\quad\mbox{in $\{f>0\}\cap A$}\quad\Rightarrow\quad K_{\nu}f\leq b\quad\mbox{in $X$}.
	\end{eqnarray}
\end{corollary}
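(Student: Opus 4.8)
The plan is to reduce the statement about $f$-weighted averages to the ordinary weak maximum principle (Definition \ref{weak-max}) by absorbing $f$ into the measure, then invoke Lemma \ref{lem-kernel-multiple} or, more directly, a change of measure. First I would set $\tilde\nu \defeq f\nu$, which by the discussion preceding the statement lies in $\mathcal{M}^{+}(X)$ and is concentrated on the Borel set $\tilde A \defeq A\cap\{f>0\}$ (indeed $f\nu$ is concentrated on $\{f>0\}$, and intersecting with $A$ keeps this true since $\nu$ is concentrated on $A$). The key identity is that for every $x\in X$,
\[
K_{\nu}f(x) = \int_X K(x,y)f(y)\,d\nu(y) = \int_X K(x,y)\,d\tilde\nu(y) = K_{\tilde\nu}1(x),
\]
so the hypothesis $K_{\nu}f\le 1$ on $\{f>0\}\cap A = \tilde A$ is exactly the hypothesis $K_{\tilde\nu}1\le 1$ on $\tilde A$ needed to apply the weak maximum principle for $K$ with the measure $\tilde\nu$ and the set $\tilde A$.

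Then I would apply Definition \ref{weak-max} directly: since $\tilde\nu\in\mathcal{M}^{+}(X)$ is concentrated on the Borel set $\tilde A$ and $K_{\tilde\nu}1\le 1$ on $\tilde A$, the weak maximum principle with constant $b$ gives $K_{\tilde\nu}1\le b$ on all of $X$. Translating back through the identity above yields $K_{\nu}f = K_{\tilde\nu}1\le b$ on $X$, which is the desired conclusion \eqref{wm-f}. One small point to check along the way is that $f\nu$ is genuinely a Radon (locally finite) measure; this is guaranteed because $\mathcal{M}^{+}(X)$ is closed under multiplication by locally bounded non-negative measurable functions as noted in the text, and in the applications $f$ will be such — if one wants the statement for merely measurable non-negative $f$ one can note the weak maximum principle argument only uses $\sigma$-finiteness, or simply restrict to the standing generality of the section.

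I do not expect any serious obstacle here; the statement is essentially a bookkeeping corollary. The only thing that requires a moment of care is making sure the set on which the hypothesis is assumed matches exactly the support consideration: the hypothesis is imposed on $\{f>0\}\cap A$ rather than on all of $A$, and this is precisely why passing to $\tilde\nu=f\nu$ (which is concentrated on $\{f>0\}\cap A$, not on all of $A$) is the right move — on the complement $A\setminus\{f>0\}$ the integrand $K(x,y)f(y)$ vanishes $\nu$-a.e. anyway, so nothing is lost. Alternatively, one could phrase the whole argument via Lemma \ref{lem-kernel-multiple} applied to $\tilde K(x,y)=K(x,y)f(y)$ together with the plain weak maximum principle for $\tilde K$, but the direct change-of-measure route above is cleaner and avoids introducing a new kernel.
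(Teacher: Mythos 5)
Your change-of-measure idea is essentially the same mechanism as the paper's proof, just located at the measure rather than the kernel: the paper defines $\tilde{K}(x,y)=K(x,y)f(y)$, invokes Lemma \ref{lem-kernel-multiple} to get the weak maximum principle for $\tilde{K}$, and then applies it with the restricted measure $\nu'=1_{\{f>0\}}\nu$, which is concentrated on $\{f>0\}\cap A$ and satisfies $\tilde{K}_{\nu'}1=K_{\nu}f$. The reason the paper takes the kernel route (the one you dismiss as less clean) is precisely the point you flag but do not resolve: the corollary is stated for an arbitrary non-negative measurable $f$, and for such $f$ the measure $\tilde{\nu}=f\nu$ need not be locally finite, hence need not belong to $\mathcal{M}^{+}(X)$. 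Definition \ref{weak-max} is a hypothesis about the kernel tested against Radon measures only, so it simply cannot be invoked with $\tilde{\nu}=f\nu$ in general; your suggestion that "the weak maximum principle argument only uses $\sigma$-finiteness" does not repair this, because the weak maximum principle is an assumed property, not an argument whose hypotheses you are free to relax, and "restricting to the standing generality" would change the statement being proved (note the corollary is later applied with $f=1_{E_y}$, but it is stated, and must be proved, for general $f$). Kernels, by contrast, are allowed to be arbitrary $[0,+\infty]$-valued, which is why the paper's modification $\tilde K$ causes no such problem: the truncation needed for unbounded $f$ is already packaged inside the proof of Lemma \ref{lem-kernel-multiple} (via Lemma \ref{lem-kernel-limit}).

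The gap is small and fixable within your own framework: replace $f$ by $f_n=f\wedge n$. Then $f_n\nu\in\mathcal{M}^{+}(X)$ (since $f_n\le n$ and $\nu$ is Radon) and is concentrated on $\{f>0\}\cap A$, and on that set
\begin{equation*}
K_{f_n\nu}1=K_{\nu}f_n\le K_{\nu}f\le 1,
\end{equation*}
so the weak maximum principle applied to $K$ with the measure $f_n\nu$ gives $K_{\nu}f_n\le b$ on all of $X$; letting $n\to\infty$ and using monotone convergence yields $K_{\nu}f\le b$ on $X$. With this two-line truncation added, your proof is complete and is, in substance, the same argument as the paper's, merely with the weight $f$ absorbed into the measure instead of the kernel.
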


The following rearrangement inequality is the starting point of a series of estimates.
\begin{lemma}\cite[Lemma 2.1]{GV2}\label{rearrange} \rm 
	Let  $(\Omega, \mathcal{A},\omega)$ 
	be a $\sigma\text{-}$finite measure space, and let $0<a=\omega(\Omega)\leq
	+\infty$. Let $f:\Omega\to[0,+\infty]$ be a measurable function. Let $\varphi:[0,a)\to[0,+\infty)$ be
	a continuous, monotone non-decreasing function, and set $\varphi(a):=\lim_{t\to a^{-}}\varphi(t)\in(0,\infty]$.
	Then the following inequality holds:
	\begin{eqnarray}\label{re-arrg}
		\int_0^{\omega(\Omega)}\varphi(t)dt\leq \int_{\Omega}\varphi\left(\omega(\{z\in\Omega:f(z)\leq f(y)\})\right)d\omega(y).
	\end{eqnarray}
\end{lemma}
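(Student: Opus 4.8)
The plan is to read the right-hand side of \eqref{re-arrg} as an integral of $\varphi$ against the push-forward of $\omega$ under the \emph{layer function}
\[ g(y) \;:=\; \omega\big(\{z\in\Omega: f(z)\le f(y)\}\big), \]
which is exactly the argument of $\varphi$ appearing in \eqref{re-arrg}, and then to compare this push-forward with Lebesgue measure on $[0,a]$. First I would note that $g=\lambda\circ f$, where $\lambda(t):=\omega(\{f\le t\})$ is a non-decreasing (hence Borel measurable) function on $[0,+\infty]$; thus $g$ is measurable with values in $[0,a]$, and the push-forward $\nu_0:=g_{*}\omega$ is a Borel measure on $[0,a]$ of total mass $\omega(\Omega)=a$. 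With this notation the right-hand side of \eqref{re-arrg} equals $\int_{[0,a]}\varphi\,d\nu_0$ (reading $\varphi(a)$ via the stated convention), so the claim becomes $\int_{[0,a]}\varphi\,d\nu_0\ge\int_0^a\varphi(t)\,dt$.

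The key step --- and the only one I expect to require genuine care --- is the distributional estimate
\[ \nu_0([0,s]) \;=\; \omega\big(\{y\in\Omega: g(y)\le s\}\big) \;\le\; s \qquad\text{for every } s\in[0,a). \]
To prove it, fix $s$ and consider the set $J_s:=\{t\in[0,+\infty]:\lambda(t)\le s\}$; since $\lambda$ is non-decreasing, $J_s$ is an interval containing $0$ (possibly empty, in which case $\{g\le s\}=\emptyset$ and there is nothing to prove), it does not contain $+\infty$ because $\lambda(+\infty)=a>s$, and $\{y:g(y)\le s\}=f^{-1}(J_s)$. Put $t_s:=\sup J_s$. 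If $t_s\in J_s$, i.e.\ $\lambda(t_s)\le s$, then $J_s=[0,t_s]$ and $\omega(f^{-1}(J_s))=\omega(\{f\le t_s\})=\lambda(t_s)\le s$; if $t_s\notin J_s$, i.e.\ $\lambda(t_s)>s$, then $J_s=[0,t_s)$ and $\omega(f^{-1}(J_s))=\omega(\{f<t_s\})=\sup_{t<t_s}\lambda(t)\le s$, because $\lambda(t)\le s$ for every $t<t_s$. (If $t_s=+\infty$ the second computation applies with $\{f<+\infty\}$, whose measure is $\lim_{t\to+\infty}\lambda(t)\le s$.) Intuitively, composing $f$ with its own distribution function $\lambda$ redistributes the mass of $\omega$ at least as evenly as Lebesgue measure.

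Finally I would deduce the conclusion from this estimate by a layer-cake computation. Let $\rho$ denote the Lebesgue--Stieltjes measure of the continuous non-decreasing function $\varphi$ on $[0,a)$; continuity of $\varphi$ makes $\rho$ atomless, so $\varphi(x)=\varphi(0)+\rho((0,x))$ for all $x\in[0,a]$ (again with the convention at $a$). A routine application of Fubini's theorem then gives
\[ \int_{[0,a]}\varphi\,d\nu_0 \;=\; \varphi(0)\,a + \int_{(0,a)}\nu_0\big((s,a]\big)\,d\rho(s), \qquad \int_0^{a}\varphi(t)\,dt \;=\; \varphi(0)\,a + \int_{(0,a)}(a-s)\,d\rho(s), \]
and since $\nu_0\big((s,a]\big)=a-\nu_0([0,s])\ge a-s$ by the key step, subtracting the second identity from the first yields $\int_{[0,a]}\varphi\,d\nu_0\ge\int_0^a\varphi(t)\,dt$, as desired. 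The only bookkeeping needed is for the cases $a=+\infty$ or $\varphi(a)=+\infty$, where one or both sides may be infinite but the termwise comparison under the integral still delivers the inequality (using the convention $0\cdot(+\infty)=0$); the substance of the argument is entirely contained in the distributional estimate of the second paragraph.
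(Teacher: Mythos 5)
The paper does not actually prove this lemma: it is quoted verbatim from \cite[Lemma 2.1]{GV2}, and its proof is not reproduced in the appendix, so there is no in-paper argument to compare yours against. Judged on its own, your proof is correct and self-contained, and its structure coincides with the natural (and, as far as I can tell, the standard) route: the whole content is the distributional estimate $\omega\left(\{y: g(y)\le s\}\right)\le s$ for $s<a$, where $g(y)=\lambda(f(y))$ and $\lambda(t)=\omega(\{f\le t\})$, and your case analysis on $J_s=\{t:\lambda(t)\le s\}$ (whether $\sup J_s$ belongs to $J_s$ or not, using continuity of $\omega$ from below for the half-open case and the case $\sup J_s=+\infty$) handles all configurations correctly. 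The passage from this estimate to the integral inequality via the Lebesgue--Stieltjes measure $\rho$ of $\varphi$ and the layer-cake identity is also sound.

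One small point you wave at but do not nail down: the push-forward $\nu_0=g_{*}\omega$ need not be $\sigma$-finite when $a=+\infty$ (it can carry an infinite atom at $\{+\infty\}$, e.g.\ when $f$ is constant and $\omega(\Omega)=\infty$), so Tonelli cannot be invoked wholesale for the identity $\int_{[0,a]}\varphi\,d\nu_0=\varphi(0)a+\int_{(0,a)}\nu_0((s,a])\,d\rho(s)$. This is harmless: split $\nu_0=\nu_0|_{[0,a)}+\nu_0(\{a\})\,\delta_a$; the restriction to $[0,a)$ is $\sigma$-finite precisely because of your key estimate $\nu_0([0,s])\le s$, Tonelli applies there, and the atom's contribution $\varphi(a)\,\nu_0(\{a\})=\bigl(\varphi(0)+\rho((0,a))\bigr)\nu_0(\{a\})$ is computed directly with the conventions in force. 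With that one line added, the argument is complete.
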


The following fundamental estimate follows by applying Lemma \ref{rearrange} to a kernel satisfying the weak maximum principle.

\begin{lemma}\label{int-type-ineq}\cite[Lemma 2.5]{GV2} \rm
	Let $(X,d)$ be a locally compact separable metric space.  Let $\nu\in  \mathcal{M}^{+}(X)$.
	Assume that
	$K$ satisfies the weak maximum principle with constant $b\ge 1$.  
	Let $\varphi:[0,+\infty)\to[0,+\infty)$ be
	a continuous,  non-decreasing function, and set $\varphi(+\infty):=\lim_{t\to +\infty}\varphi(t)\in[0,+\infty]$. Then
	\begin{eqnarray}
		\int_0^{K_{\nu}1 (x)}\varphi(t)dt\leq K_{\nu}[\varphi(b K_{\nu}1)](x),
	\end{eqnarray}
 for each $x\in X$ such that $K(x,\cdot)$ is $\sigma$-finite with respect to $\nu$.
\end{lemma}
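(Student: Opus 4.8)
The plan is to apply the rearrangement inequality, Lemma \ref{rearrange}, on the measure space $(\Omega, \mathcal{A}, \omega)$ obtained by restricting and reweighting $\nu$ through the kernel $K$ at the fixed point $x$. Concretely, fix $x \in X$ such that $K(x,\cdot)$ is $\sigma$-finite with respect to $\nu$, and introduce the measure $\omega$ on $X$ defined by $d\omega(y) = K(x,y)\,d\nu(y)$; by the $\sigma$-finiteness hypothesis this is a $\sigma$-finite measure, and $\omega(X) = K_\nu 1(x) =: a \in [0,+\infty]$. If $a = 0$ the left-hand side is $\int_0^0 \varphi = 0$ and the inequality is trivial, so assume $a > 0$. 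We now need a measurable function $f$ on $X$ playing the role required in Lemma \ref{rearrange}; the natural choice is $f = K_\nu 1$ itself, so that the superlevel/sublevel sets of $f$ are exactly the sets on which the weak maximum principle gives control.

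The key step is to estimate $\omega(\{z : f(z) \le f(y)\})$ from above by $b\,K_\nu 1(y)$ for $\omega$-a.e.\ $y$. Fix such a $y$ and set $A_y = \{z \in X : K_\nu 1(z) \le K_\nu 1(y)\}$; this is a Borel set. Consider the measure $\nu_y = \mathbf{1}_{A_y}\,\nu$, which is concentrated on $A_y$, and normalize: on $A_y$ we have $K_{\nu_y}1 \le K_\nu 1 \le K_\nu 1(y)$ by construction (here one should take $y$ with $K_\nu 1(y) < +\infty$, which holds $\omega$-a.e.\ since otherwise the bound is vacuous). Hence the measure $\nu_y / K_\nu 1(y)$ satisfies $K_{\nu_y/K_\nu1(y)}1 \le 1$ on $A_y$, and the weak maximum principle with constant $b$ yields $K_{\nu_y}1 \le b\, K_\nu 1(y)$ on all of $X$, in particular at $x$. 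But $K_{\nu_y}1(x) = \int_{A_y} K(x,z)\,d\nu(z) = \omega(A_y) = \omega(\{z : f(z) \le f(y)\})$. This is precisely the bound we want.

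Plugging this into the conclusion of Lemma \ref{rearrange} applied to $(\Omega,\mathcal A,\omega) = (X,\mathcal A,\omega)$, $f = K_\nu 1$, and the given $\varphi$ (extended by $\varphi(a) = \lim_{t\to a^-}\varphi(t)$, consistent with $\varphi(+\infty)$ when $a = +\infty$, and monotonicity makes the hypotheses of Lemma \ref{rearrange} applicable), we obtain
\[
\int_0^{K_\nu 1(x)} \varphi(t)\,dt \;=\; \int_0^{\omega(X)}\varphi(t)\,dt \;\le\; \int_X \varphi\big(\omega(\{z : f(z)\le f(y)\})\big)\,d\omega(y) \;\le\; \int_X \varphi\big(b\, K_\nu 1(y)\big)\,d\omega(y).
\]
Unwinding the definition of $\omega$, the right-hand side equals $\int_X K(x,y)\,\varphi(b\,K_\nu 1(y))\,d\nu(y) = K_\nu[\varphi(b K_\nu 1)](x)$, which is the asserted inequality. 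One should also double-check the monotonicity of $\varphi$ is used correctly in the step $\varphi(\omega(\{z:f(z)\le f(y)\})) \le \varphi(b K_\nu 1(y))$, and that the convention $0\cdot(+\infty)=0$ is invoked where $K(x,y)$ vanishes; these are routine.

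The main obstacle is the measure-theoretic bookkeeping around the sets $\{K_\nu 1(y) = +\infty\}$ and the application of the weak maximum principle to the truncated-and-normalized measure $\nu_y$: one must ensure $\nu_y$ is a genuine Radon (locally finite) measure so that Definition \ref{weak-max} applies, and handle the case where $K_\nu 1$ is identically $+\infty$ on a set of positive $\omega$-measure (in which case $a=+\infty$ and one argues by the monotone structure directly, or notes the inequality is trivial). Everything else is a direct substitution into Lemmas \ref{rearrange} and the weak maximum principle.
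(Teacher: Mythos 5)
Your proposal is correct and follows essentially the same route as the paper: apply the rearrangement inequality (Lemma \ref{rearrange}) to the measure $\omega=K(x,\cdot)\nu$ with $f=K_{\nu}1$, and bound $\omega(E_y)=K_{\nu}1_{E_y}(x)$ by $bK_{\nu}1(y)$ via the weak maximum principle applied to the measure restricted to the sublevel set. The only cosmetic difference is that the paper packages the normalization-and-restriction step as Corollary \ref{cor-max-f}, whereas you apply Definition \ref{weak-max} directly to $\nu_y/K_{\nu}1(y)$, which is the same argument.
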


\begin{corollary}\cite[(2.10)]{GV2}\label{cor-Hardy-p} \rm
	Let $p> 1$. 
	Under the same conditions as in Lemma \ref{int-type-ineq}, the following inequality holds for each $x\in X$ such that $K(x,\cdot)$ is $\sigma$-finite with respect to $\nu$,
	\[(K_{\nu}1(x))^p\le pb^{p-1} (K_{\nu}((K_{\nu}1)^{p-1}))(x).\]
\end{corollary}
We can extract from the proof of Lemma \ref{int-type-ineq} the following fact, which can simplify our presentation.
\begin{lemma}\rm\label{lem-handy}
	Let $(X,d)$ be a locally compact separable metric space.  Let $\nu\in  \mathcal{M}^{+}(X)$.
	Assume that
	$K$ satisfies the weak maximum principle with constant $b\ge 1$.  Let $f$ be a non-negative measurable function on $X$. For each $y\in X$, set
	\[	E_y=\left\{z\in X: K_{\nu}f(z)\leq K_{\nu}f(y)\right\}.\]
	Then for all $x,y\in X$,
	\[K_{\nu}(1_{E_y}f)(x)\le bK_{\nu}f(y).\]
\end{lemma}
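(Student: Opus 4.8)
The plan is to extract the desired bound directly from the proof of Lemma~\ref{int-type-ineq} (\cite[Lemma 2.5]{GV2}), specializing the argument to a step function rather than a general $\varphi$. Fix $x,y\in X$, set $g=K_\nu f$, $c=g(y)$, and $E_y=\{z: g(z)\le c\}$. If $c=0$ then $f=0$ $\nu$-a.e. on $E_y$ (since $K$ is positive $\nu$-a.e., or more carefully since $K_\nu f(z)=0$ for $z\in E_y$ forces $f=0$ $\nu$-a.e. where $K(z,\cdot)>0$), and there is nothing to prove; if $c=+\infty$ the inequality is trivial. So assume $0<c<+\infty$.

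First I would reduce to the bounded case by a cut-off: let $f_N=f\wedge N\cdot 1_{B_N}$ for an exhausting sequence of relatively compact Borel sets $B_N$, so that $K_\nu f_N\uparrow K_\nu f$ and each $K_\nu f_N$ is finite-valued where relevant; prove the inequality for $f_N$ (with $E_y$ replaced by the analogous set for $K_\nu f_N$ — one must check these sets behave well under the limit, using monotone convergence and the convention $0\cdot\infty=0$) and pass to the limit. Alternatively, and more cleanly, I would run the rearrangement argument directly. Consider the measure $\omega = (1_{E_y} f)\,\nu$, which lies in $\mathcal{M}^+(X)$ and is concentrated on the Borel set $A=E_y\cap\{f>0\}$. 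The key observation is that on $A$ we have $K_\omega 1 (z) = K_\nu(1_{E_y}f)(z) \le K_\nu f(z) \le c$ for every $z\in A$ (by definition of $E_y$), hence $K_\omega 1 \le c$ on $A$.

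Now apply the weak maximum principle to the measure $\omega$ and the constant $c$: from $\frac1c K_\omega 1 \le 1$ on $A$, Definition~\ref{weak-max} (or Corollary~\ref{cor-max-f}) gives $\frac1c K_\omega 1 \le b$ on all of $X$, i.e. $K_\nu(1_{E_y}f)(x) = K_\omega 1(x) \le bc = b\,K_\nu f(y)$, which is exactly the claim. The only subtlety is the applicability of the weak maximum principle when $c$ could a priori be such that $K_\omega 1$ is not $\sigma$-finite — but since $\omega$ is a Radon measure concentrated on $A$ and $K$ is a kernel, the hypothesis of Definition~\ref{weak-max} only requires the implication $K_\omega 1\le 1$ on $A \Rightarrow K_\omega 1 \le b$ on $X$, with no $\sigma$-finiteness caveat, so it applies verbatim.

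I expect the main (and essentially only) obstacle to be purely bookkeeping: confirming that $\omega=(1_{E_y}f)\nu$ is genuinely a Radon measure (locally finite), which follows from local boundedness assumptions on $f$ in the ambient setup, or can be circumvented by the cut-off $f_N$ and a monotone limit; and ensuring that the set $E_y$ is Borel, which holds because $K_\nu f$ is measurable (indeed lower semi-continuous when $K$ is, by Proposition~\ref{prop-lsc}-type reasoning). Everything else is an immediate application of the weak maximum principle, so the proof should be only a few lines. If one prefers to avoid even the measure-theoretic check on $\omega$, the cleanest route is to invoke Corollary~\ref{cor-max-f} with the function $\tilde f = 1_{E_y} f$, the measure $\nu$, and the Borel set $A=X$, after first scaling by $1/c$: that corollary is stated precisely for a non-negative measurable function times $\nu$ and delivers the conclusion directly.
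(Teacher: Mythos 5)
Your argument is the paper's own: the paper obtains this lemma by extracting from the proof of Lemma \ref{int-type-ineq} exactly the observation that $K_{\nu}(1_{E_y}f)(z)\le K_{\nu}f(z)\le K_{\nu}f(y)$ for $z\in E_y\supseteq\{1_{E_y}f>0\}$, and then invoking Corollary \ref{cor-max-f} (i.e.\ the weak maximum principle for the kernel $K(x,z)f(z)$) after scaling by $c=K_{\nu}f(y)$, which is precisely your ``cleanest route''; the detour through the measure $\omega=(1_{E_y}f)\nu$ and the cut-off $f_N$ are unnecessary. One small caveat: when $c=0$ your claim that $f=0$ $\nu$-a.e.\ on $E_y$ uses positivity of $K$, which is not assumed here, but that degenerate case follows anyway by applying the same scaled maximum-principle argument with an arbitrary $\varepsilon>0$ in place of $c$ and letting $\varepsilon\to 0$, so there is no real gap.
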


The following iterative version of Lemma \ref{int-type-ineq} is obtained in \cite{GV2}.

\begin{lemma}\cite[Lemma 2.7]{GV2}\label{lem-iter}  \rm
	Under the setting of Lemma \ref{int-type-ineq}, define a sequence $\{f_k\}_{k\ge 0}$ of functions on $X$
	by
	\begin{eqnarray}\label{it-f}
		f_0=K_{\nu}1,\quad f_{k+1}=K_{\nu}(\varphi(f_k))\quad\mbox{for  $k\in\mathbb{N}$}.
	\end{eqnarray}
	Set for $t\ge 0$
	\begin{eqnarray}\label{def-psi}
		\psi(t)=\varphi(b^{-1}t)
	\end{eqnarray}
	and define also the sequence $\{\psi_k\}_{k=0}^{\infty}$ of functions on $[0,\infty)$ by $\psi_0(t)=t$ and
	\begin{eqnarray}\label{psi-k}
		\psi_{k+1}(t)=\int_0^t\psi\circ\psi_k(s)ds,\quad\mbox{for $k\in \mathbb{N}$}.
	\end{eqnarray}
	Then, for each
	$k\in \mathbb{N}$
	\begin{eqnarray}\label{est-it}
		\psi_k(f_0(x))\leq f_k(x),
	\end{eqnarray}
for each $x\in X$ such that $K(x,\cdot)$ is $\sigma$-finite with respect to $\nu$.
\end{lemma}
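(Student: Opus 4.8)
The plan is to prove Lemma \ref{lem-iter} by induction on $k$, following the iterative scheme and using Lemma \ref{int-type-ineq} as the engine at each stage. The base case $k=0$ is trivial: $\psi_0(f_0(x))=f_0(x)=K_\nu 1(x)=f_0(x)$, with equality. For the inductive step, suppose $\psi_k(f_0(x))\le f_k(x)$ holds for all $x$ such that $K(x,\cdot)$ is $\sigma$-finite with respect to $\nu$. I would first observe that $\psi_{k+1}$ is itself continuous and non-decreasing on $[0,\infty)$ (it is an indefinite integral of the non-negative function $\psi\circ\psi_k$), so Lemma \ref{int-type-ineq} applies with $\varphi$ replaced by $\psi_{k+1}$.

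The heart of the argument is the chain
\[
\psi_{k+1}(f_0(x))=\int_0^{K_\nu 1(x)}\psi\circ\psi_k(s)\,ds\le K_\nu\big[(\psi\circ\psi_k)(bK_\nu 1)\big](x),
\]
where the inequality is exactly Lemma \ref{int-type-ineq} applied to the function $\varphi:=\psi\circ\psi_k$. Now, by the definition $\psi(t)=\varphi(b^{-1}t)$ from \eqref{def-psi}, we have $(\psi\circ\psi_k)(bK_\nu 1)=\psi(\psi_k(bK_\nu 1))$... but I need to be careful here: I want to unwind so the argument of the outer operator becomes $\varphi(f_k)$. The correct manipulation is to note $\psi\circ\psi_k(bt)=\varphi(b^{-1}\psi_k(bt))$; this is not obviously $\varphi(\psi_k(t))$ unless $\psi_k$ is linear, so instead I should track things as follows. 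Write $\varphi=\psi(b\,\cdot\,)$, i.e. $\varphi(s)=\psi(bs)$. Then $\varphi(f_k(x))=\psi(bf_k(x))$, and since $\psi$ is non-decreasing and (by induction) $bf_k(x)\ge b\psi_k(f_0(x))$... hmm, I actually want the monotonicity to push in the direction $K_\nu(\varphi(f_k))\ge K_\nu(\text{something})$. The clean route: by the inductive hypothesis $f_k\ge \psi_k(f_0)$ pointwise (where defined), and $\varphi$ is non-decreasing, so $\varphi(f_k(y))\ge \varphi(\psi_k(f_0(y)))$ for every $y$; applying $K_\nu$ (which is monotone) gives $f_{k+1}(x)=K_\nu(\varphi(f_k))(x)\ge K_\nu(\varphi\circ\psi_k\circ f_0)(x)$. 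It remains to show $K_\nu(\varphi\circ\psi_k\circ f_0)(x)\ge \psi_{k+1}(f_0(x))$, and since $\varphi\circ\psi_k=\psi(b\,\cdot\,)\circ\psi_k$, evaluating at $f_0$ and comparing with $\psi_{k+1}(f_0(x))=\int_0^{f_0(x)}\psi(\psi_k(s))\,ds$ — this is precisely the content of Lemma \ref{int-type-ineq} with the non-decreasing function $s\mapsto \psi(b^{-1}\cdot bs)$...

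Let me state the step cleanly rather than grind: I would apply Lemma \ref{int-type-ineq} with $\varphi_{\mathrm{new}}(s):=\psi(\psi_k(s))$ (continuous, non-decreasing as a composition of such), obtaining $\int_0^{K_\nu 1(x)}\psi(\psi_k(s))\,ds\le K_\nu[\psi(\psi_k(bK_\nu 1))](x)=K_\nu[\psi\circ\psi_k\circ(bf_0)](x)$, and then identify $\psi\circ\psi_k\circ(b\,\cdot)$ with the function appearing in the iteration via the substitution encoded in \eqref{def-psi}–\eqref{psi-k}; one checks directly from $f_{k+1}=K_\nu(\varphi(f_k))$ and $\varphi=\psi(b\,\cdot)$ that the integrand matches $\varphi(f_k)$ after using the inductive bound $f_k\ge\psi_k(f_0)$ and monotonicity of $\varphi$. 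The one subtlety requiring care throughout is the $\sigma$-finiteness hypothesis: at each step I must verify that the restriction "$K(x,\cdot)$ is $\sigma$-finite with respect to $\nu$" is the same condition for all $k$ (it is, since it depends only on $K,x,\nu$, not on the iterate), so Lemma \ref{int-type-ineq} is legitimately applicable at every stage. I expect the main obstacle to be purely bookkeeping — keeping the factors of $b$ and the compositions $\psi\circ\psi_k$ aligned with the definitions — rather than any genuine analytic difficulty; the convention $0\cdot(+\infty)=0$ and the monotone-limit remark handle the degenerate cases where $f_0(x)=+\infty$.
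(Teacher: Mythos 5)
Your induction does not close: the step where you claim the bookkeeping is routine is exactly where the argument breaks. Applying Lemma \ref{int-type-ineq} with $\varphi_{\mathrm{new}}=\psi\circ\psi_k$ gives
\[
\psi_{k+1}(f_0(x))\le K_{\nu}\bigl[\psi\bigl(\psi_k(b\,f_0)\bigr)\bigr](x),
\]
and to compare the right-hand side with $f_{k+1}=K_{\nu}\bigl[\varphi(f_k)\bigr]=K_{\nu}\bigl[\psi(b f_k)\bigr]$ you would need $\psi_k(b f_0(y))\le b f_k(y)$. Your inductive hypothesis only gives $\psi_k(f_0(y))\le f_k(y)$, and since each $\psi_k$ ($k\ge 1$) is convex with $\psi_k(0)=0$ (it is an integral of a non-decreasing function), one has $\psi_k(bt)\ge b\,\psi_k(t)$ for $b\ge 1$, so the needed inequality is strictly stronger than the hypothesis and monotonicity cannot bridge it. The same $b$-misalignment kills your alternative route: choosing $\varphi_{\mathrm{new}}(s)=\varphi(\psi_k(s/b))$ so that the kernel side becomes $K_{\nu}\bigl[\varphi(\psi_k(f_0))\bigr]$ makes the integral side $\int_0^{f_0(x)}\psi\bigl(b\,\psi_k(s/b)\bigr)ds$, which is \emph{smaller} than $\psi_{k+1}(f_0(x))=\int_0^{f_0(x)}\psi(\psi_k(s))ds$, again by convexity of $\psi_k$. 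You noticed this obstruction yourself ("not obviously $\varphi(\psi_k(t))$ unless $\psi_k$ is linear") but then asserted that "one checks directly" that things match; they do not. Note the reason the base case $k=1$ works is that there $\psi(b f_0)=\varphi(f_0)$ exactly, i.e.\ the $b$ cancels; for $k\ge 1$ the $b$ sits inside $\psi_k$ and no longer cancels.

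The paper's proof gets around this with a genuinely different mechanism: for each $y$ it introduces the sublevel set $E_y=\{z:f_k(z)\le f_k(y)\}$ and the truncated kernel $\hat K(x,z)=K(x,z)1_{E_y}(z)$, uses Lemma \ref{lem-handy} (the weak maximum principle) to get $\hat f_k(x)\le b f_k(y)$, applies the inductive hypothesis to the kernel $\hat K$ (not to $K$), giving $\psi_k\bigl(K_{\nu}1_{E_y}(x)\bigr)\le \hat f_k(x)$, hence $\psi\circ\psi_k\bigl(K_{\nu}1_{E_y}(x)\bigr)\le \psi(b f_k(y))=\varphi(f_k(y))$, and only then invokes the rearrangement Lemma \ref{rearrange} directly with $\omega=K(x,\cdot)\nu$ and the function $f_k$ (so the relevant sets are sublevel sets of $f_k$, not of $f_0$), integrating the last bound in $y$ against $\omega$ to obtain $\psi_{k+1}(f_0(x))\le f_{k+1}(x)$. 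None of this truncated-kernel/rearrangement structure appears in your proposal, and without it (or an equivalent device) the induction as you set it up cannot be completed.
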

\begin{corollary}\label{cor-psi=tq}\cite[Corollary 2.8]{GV2} \rm
	Under the settings of Lemma \ref{int-type-ineq} and Lemma \ref{lem-iter}, choose $\varphi(t)=t^q$ for some $q>0$. We have  for each $x\in X$ such that $K(x,\cdot)$ is $\sigma$-finite with respect to $\nu$,
	\begin{eqnarray}\label{it-q-1}
		[K_{\nu}1(x)]^{1+q+q^2+\cdots+q^k}\leq b^{q+q^2+\cdots+q^k}c(q,k) f_k(x),
	\end{eqnarray}
	where
	$c(q,k)=\prod_{j=1}^{k}(1+q+q^{2}+\cdots +q^{j})^{q^{k-j}}.$
\end{corollary}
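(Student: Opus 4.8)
The plan is to iterate Lemma \ref{lem-iter} with the specific choice $\varphi(t)=t^{q}$ and track the polynomials $\psi_{k}$ explicitly. First I would record that with $\varphi(t)=t^{q}$ the auxiliary function $\psi$ of \eqref{def-psi} is $\psi(t)=\varphi(b^{-1}t)=b^{-q}t^{q}$. Then the recursion \eqref{psi-k} becomes $\psi_{0}(t)=t$ and $\psi_{k+1}(t)=\int_{0}^{t}b^{-q}(\psi_{k}(s))^{q}\,ds$. The natural ansatz is that each $\psi_{k}$ is a pure monomial, $\psi_{k}(t)=a_{k}\,t^{e_{k}}$ for suitable constants $a_{k}>0$ and exponents $e_{k}$. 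Plugging this in, one reads off the exponent recursion $e_{0}=1$, $e_{k+1}=q\,e_{k}+1$, which solves to $e_{k}=1+q+q^{2}+\cdots+q^{k}$, and the coefficient recursion $a_{0}=1$, $a_{k+1}=b^{-q}\,a_{k}^{q}/e_{k+1}=b^{-q}\,a_{k}^{q}/(1+q+\cdots+q^{k+1})$.

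The main bookkeeping step is to solve the coefficient recursion. Unwinding $a_{k+1}=b^{-q}\,a_{k}^{q}\,(1+q+\cdots+q^{k+1})^{-1}$ by induction gives a product formula: the powers of $b^{-1}$ accumulate as $b^{-(q+q^{2}+\cdots+q^{k})}$ (each application contributes a factor $b^{-q}$ which then gets raised to further powers of $q$ at later stages), and the denominators accumulate as $\prod_{j=1}^{k}(1+q+\cdots+q^{j})^{q^{k-j}}$, exactly the constant $c(q,k)$ named in the statement. Thus $\psi_{k}(t)=b^{-(q+q^{2}+\cdots+q^{k})}\,c(q,k)^{-1}\,t^{\,1+q+\cdots+q^{k}}$. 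I would verify this identity by a short induction, checking the base case $k=0$ (where both the $b$-exponent and the product are empty, so $\psi_{0}(t)=t$) and the inductive step against the recursion above.

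Finally I would feed this into Lemma \ref{lem-iter}. For any $x\in X$ such that $K(x,\cdot)$ is $\sigma$-finite with respect to $\nu$, the estimate \eqref{est-it} reads $\psi_{k}(f_{0}(x))\le f_{k}(x)$ with $f_{0}=K_{\nu}1$; substituting the closed form of $\psi_{k}$ yields
\[
b^{-(q+q^{2}+\cdots+q^{k})}\,c(q,k)^{-1}\,[K_{\nu}1(x)]^{\,1+q+\cdots+q^{k}}\le f_{k}(x),
\]
and rearranging gives precisely \eqref{it-q-1}. I do not expect any genuine obstacle here; the only thing requiring care is getting the exponents on $b$ and the structure of the product $c(q,k)$ exactly right in the induction, since off-by-one errors in the accumulation of the powers of $q$ are easy to make. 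One should also note that monotonicity of $\varphi(t)=t^{q}$ on $[0,\infty)$ (needed to apply Lemma \ref{lem-iter}) holds for every $q>0$, so no extra hypothesis is required.
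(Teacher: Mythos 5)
Your proposal is correct and follows exactly the intended route: the paper gives no separate proof (it cites \cite[Corollary 2.8]{GV2}), and the standard derivation is precisely your computation of the closed form $\psi_{k}(t)=b^{-(q+\cdots+q^{k})}c(q,k)^{-1}t^{1+q+\cdots+q^{k}}$ from the recursions $e_{k+1}=qe_{k}+1$, $a_{k+1}=b^{-q}a_{k}^{q}/e_{k+1}$, followed by substitution into \eqref{est-it} of Lemma \ref{lem-iter}. I checked the exponent and coefficient inductions (including the base cases $k=1,2$) and they match $c(q,k)=\prod_{j=1}^{k}(1+q+\cdots+q^{j})^{q^{k-j}}$ and the power $b^{q+\cdots+q^{k}}$ exactly.
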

Based on Lemma \ref{lem-iter} and Corollary \ref{cor-Hardy-p}, and following the ideas in \cite[Theorem 3.3]{GV2}, we can obtain the following important estimate.
\begin{theorem}\cite[Theorem 3.2]{GV2}\label{thm-iteration} \rm
	Let $(X,d)$ be a locally compact separable metric space.  Let $\nu\in  \mathcal{M}^{+}(X)$.
	Assume that
	$K$ satisfies the weak maximum principle with constant $b\ge 1$.
Let $g:[1,+\infty]\rightarrow[1,+\infty]$ be a continuous, non-decreasing function. Set for $t\ge 1$,
	\[F(t)=\int_{1}^{t}\frac{d s}{g(s)}.\]
	Suppose that $u:X\rightarrow[1,+\infty]$ is a  measurable function. Assume that there is a measurable set $A\subseteq X$ with $\nu(A^c)=0$ such that for each $x\in A$, $u(x)<+\infty$, and 
		\[u(x)\ge K_{\nu}(g(u))(x)+1\]
		holds.
		Then at each $x\in A$, we have
		\[K_{\nu}1(x)<b\int_{1}^{+\infty}\frac{d t}{g(t)}.\]
\end{theorem}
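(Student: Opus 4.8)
The plan is to iterate the hypothesis $u \ge K_\nu(g(u)) + 1$ together with the rearrangement machinery of Lemma~\ref{int-type-ineq}, exactly in the spirit of \cite[Theorem 3.2]{GV2}. First I would observe that from the hypothesis we immediately get $u \ge 1$ on $A$ (indeed $K_\nu(g(u)) \ge 0$), so $g(u)$ is well-defined, and also $u \ge K_\nu(g(u)) + 1 \ge K_\nu 1 + 1$ on $A$ after using $g \ge 1$. The key point is to feed this back into the integral operator. Fix $x \in A$ and set $w = K_\nu 1$; I want to bound $w(x)$. Applying Lemma~\ref{int-type-ineq} with the nondecreasing function $\varphi$ and then comparing to $g$ via the substitution $\psi(t) = \varphi(b^{-1}t)$, one reduces the problem to analyzing the scalar iteration $\psi_k$ defined in Lemma~\ref{lem-iter}, whose limit is governed by $F^{-1}$ (the inverse of $F(t) = \int_1^t ds/g(s)$).

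More concretely, the second step is to choose $\varphi$ so that $\psi(t) = \varphi(b^{-1}t) = g(t)$ on the relevant range, i.e. $\varphi(t) = g(bt)$; since $g$ is continuous, nondecreasing and $\ge 1$, so is $\varphi$. Then the iterates $f_k$ from \eqref{it-f} satisfy $f_0 = K_\nu 1 = w$ and, by induction using the hypothesis and monotonicity of $K_\nu$, one checks $f_k(x) \le u(x)$ for all $k$: indeed $f_0 = K_\nu 1 \le K_\nu(g(u)) \le u - 1 < u$ on $A$, and if $f_k \le u$ then $\varphi(f_k) = g(bf_k)$ — here I would need to be slightly careful about the factor $b$, which is why one typically works with $bw$ rather than $w$, or equivalently shifts the role of the constant. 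The scalar lemma (Lemma~\ref{lem-iter}, \eqref{est-it}) then gives $\psi_k(f_0(x)) \le f_k(x) \le u(x) < \infty$ for every $k$. Letting $k \to \infty$, the sequence $\psi_k(s)$ increases to a limit which is $+\infty$ as soon as $s > b\int_1^\infty dt/g(t)$: this is the elementary ODE-comparison fact that the fixed-point iteration $\psi_{k+1}(t) = \int_0^t \psi\circ\psi_k(s)\,ds$ for $\psi(s) = g(bs)$ blows up beyond the value $b\int_1^\infty dt/g(t)$, because $F(t) = \int_1^t ds/g(s)$ is the "time to infinity" for the corresponding autonomous flow. Hence $f_0(x) = K_\nu 1(x)$ cannot exceed $b\int_1^\infty dt/g(t)$; a strict inequality follows since if equality held the limit would already be infinite (or by a small perturbation of the argument), yielding the claim.

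The one technical hypothesis I must not forget is the $\sigma$-finiteness of $K(x,\cdot)$ with respect to $\nu$, which is required to invoke Lemmas~\ref{int-type-ineq} and~\ref{lem-iter}; I would note that for $x \in A$ this is automatic, since $K_\nu(g(u))(x) \le u(x) < \infty$ forces $K(x,\cdot)g(u)$ to be a finite measure, and as $g(u) \ge 1$ this gives $K(x,\cdot)\nu$ finite, hence $\sigma$-finite.

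The main obstacle I anticipate is bookkeeping the constant $b$ correctly through the iteration: the weak maximum principle introduces a factor $b$ each time one passes from $K_\nu 1 \le 1$ on the support to $K_\nu 1 \le b$ on $X$, and this is precisely what the definition $\psi(t) = \varphi(b^{-1}t)$ in Lemma~\ref{lem-iter} is engineered to absorb. Getting the induction $f_k(x) \le u(x)$ to close therefore requires either working with the hypothesis in the form $u \ge K_\nu(g(u)) + 1$ (the "$+1$" giving the slack needed to start the induction) or rescaling; matching this against the scalar growth $F^{-1}$ and extracting the sharp constant $b\int_1^\infty dt/g(t)$ — rather than a worse constant — is the delicate part. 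Everything else (monotonicity of $K_\nu$, continuity/monotonicity of $g$, the change of variables converting $F$ into the blow-up threshold) is routine.
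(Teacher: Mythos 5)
Your overall route (build the iterates $f_k$ of Lemma \ref{lem-iter}, dominate them by $u$ via the hypothesis, and then compare the scalar iterates $\psi_k$ with the escape time of an autonomous ODE) is exactly the paper's strategy, and your observation that $\sigma$-finiteness of $K(x,\cdot)\nu$ is automatic on $A$ is correct. However, there is a genuine gap at the central step, and you flag it yourself without resolving it: with your choice $\varphi(t)=g(bt)$ (so that $\psi(t)=\varphi(b^{-1}t)=g(t)$), the induction $f_k\le u$ does not close. To get $f_{k+1}=K_{\nu}(g(bf_k))\le K_{\nu}(g(u))\le u-1$ you would need $bf_k\le u$, and $f_k\le u$ gives no such thing when $b>1$; saying one "works with $bw$ rather than $w$, or shifts the role of the constant" is not an argument, because the hypothesis $u\ge K_{\nu}(g(u))+1$ says nothing about $bu$. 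In addition, even if that induction did close, your scalar comparison is off: with $\psi=g$ the blow-up threshold of $\psi_{k+1}(t)=\int_0^t\psi\circ\psi_k$ is $\int_0^{\infty}ds/g(s)$ (which also requires extending $g$ below $1$), not the claimed $b\int_1^{\infty}dt/g(t)$; the factor $b$ in the conclusion must come from Lemma \ref{lem-iter}'s definition $\psi(t)=\varphi(b^{-1}t)$, not from inserting $b$ into $\varphi$.

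The paper's fix is precisely the point you were missing: take $\varphi(t)=g(t+1)$ and run the induction with the invariant $u\ge f_k+1$ on $A$. Then $g(u)\ge g(f_k+1)=\varphi(f_k)$, so $u\ge K_{\nu}(g(u))+1\ge K_{\nu}(\varphi(f_k))+1=f_{k+1}+1$, and the "+1" in the hypothesis is used at every step (not just to start the induction, as you suggest). Lemma \ref{lem-iter} then gives $\psi_k(K_{\nu}1(x))\le f_k(x)\le u(x)-1<\infty$ with $\psi(t)=g(b^{-1}t+1)$, the limit $\psi_{\infty}$ solves $\psi_{\infty}'=\psi\circ\psi_{\infty}$ on $[0,K_{\nu}1(x)]$, and the change of variables $\Psi(r)=\int_0^r ds/g(b^{-1}s+1)=bF(1+r/b)$ yields $K_{\nu}1(x)=\Psi(\psi_{\infty}(K_{\nu}1(x)))$, which is strictly below $\lim_{r\to\infty}\Psi(r)=b\int_1^{\infty}dt/g(t)$ simply because $\psi_{\infty}(K_{\nu}1(x))\le u(x)-1$ is finite and $\Psi$ is strictly increasing — no "perturbation argument" is needed for strictness. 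As written, your proposal would not produce the stated bound; with the shift $\varphi(t)=g(t+1)$ it becomes the paper's proof.
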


\begin{corollary}\cite[Corollary 3.4]{GV2}\label{coro-iteration-1} \rm
	Under the setting of Theorem  \ref{thm-iteration}, choose $g(t)=t^q$ for some $q>1$. We have at  each $x\in A$,
	\[K_{\nu}1(x)<\frac{b}{q-1}.\]
\end{corollary}

Theorem \ref{thm-iteration} can be extended to more general types of integral inequalities.
\begin{theorem}[{A variant of \cite[Theorem 5.1]{GV2}}]\label{thm-iteration-h} \rm
	Let $(X,d)$ be a locally compact separable metric space.  Let $\nu\in  \mathcal{M}^{+}(X)$. Let $h$ be a bounded, positive measurable function  on $X$. Assume that the new kernel $\tilde{K}$ defined by $\tilde{K}(x,y)=\frac{K(x,y)}{h(x)h(y)}$ satisfies the weak maximum principle with constant $b\ge 1$.
	
	Let $q>1$ be a constant. Suppose that $u:X\rightarrow (0,+\infty]$ is a 
	measurable function. Assume that there is a measurable set $A\subseteq X$ with $\nu(A^c)=0$ such that for each $x\in A$, $u(x)<+\infty$, and 
		\[u(x)\ge K_{\nu}(u^q)(x)+h(x)\]holds.
		Then at each $x\in A$, we have
		\[K_{\nu}(h^q)(x)<\frac{b}{q-1} h(x).\]
\end{theorem}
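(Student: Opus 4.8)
The plan is to reduce Theorem~\ref{thm-iteration-h} to the already established Theorem~\ref{thm-iteration} (more precisely, to Corollary~\ref{coro-iteration-1}) by a change of unknown that transfers the inequality from $K$ to the conjugated kernel $\tilde K(x,y)=K(x,y)/(h(x)h(y))$. Concretely, given a solution $u>0$ of $u\ge K_\nu(u^q)+h$ on $A$, set $w(x)=u(x)/h(x)$ and let $d\tilde\nu = h^{q+1}\,d\nu$. The idea is that dividing the hypothesis by $h(x)$ and inserting the factors $h(y)^{q}=h(y)\cdot h(y)^{q-1}$ inside the integral turns $K_\nu(u^q)(x)/h(x)$ into $\tilde K_{\tilde\nu}(w^q)(x)$, so that $w$ satisfies $w(x)\ge \tilde K_{\tilde\nu}(w^q)(x)+1$ on $A$. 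Since $\tilde K$ satisfies the weak maximum principle with constant $b$ by hypothesis, Corollary~\ref{coro-iteration-1} (applied with the kernel $\tilde K$, the measure $\tilde\nu$, the function $g(t)=t^q$, and noting $w\ge 1$ on $A$ because $u\ge h$ there) yields $\tilde K_{\tilde\nu}1(x)<\frac{b}{q-1}$ at each $x\in A$. Unwinding the definitions, $\tilde K_{\tilde\nu}1(x)=\int_X \frac{K(x,y)}{h(x)h(y)}h(y)^{q+1}\,d\nu(y)=\frac{1}{h(x)}\int_X K(x,y)h(y)^{q}\,d\nu(y)=\frac{1}{h(x)}K_\nu(h^q)(x)$, and the conclusion $K_\nu(h^q)(x)<\frac{b}{q-1}h(x)$ follows.

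First I would carefully record the two preliminary reductions needed before invoking the cited results. The measure $\tilde\nu=h^{q+1}\nu$ is in $\mathcal M^+(X)$ because $h$ is bounded and positive, hence $h^{q+1}$ is locally bounded and nonnegative, so $\tilde\nu$ is again a Radon measure (by the discussion in Section~\ref{sect-potential} on $f\nu$), and it is concentrated on the same sets as $\nu$; in particular $\tilde\nu(A^c)=0$. Next I would check that $w=u/h$ maps into $(0,+\infty]$ and is finite on $A$ (clear since $h>0$ and $u<+\infty$ on $A$), and that $w\ge 1$ on $A$: this is exactly the hypothesis $u\ge K_\nu(u^q)+h\ge h$ on $A$. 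Then I would verify the algebraic identity $K_\nu(u^q)(x) = h(x)\,\tilde K_{\tilde\nu}(w^q)(x)$ for $x\in A$ by direct substitution: $\tilde K_{\tilde\nu}(w^q)(x)=\int_X \frac{K(x,y)}{h(x)h(y)}\,\frac{u(y)^q}{h(y)^q}\,h(y)^{q+1}\,d\nu(y)=\frac{1}{h(x)}\int_X K(x,y)u(y)^q\,d\nu(y)$; the convention $0\cdot(+\infty)=0$ causes no trouble since $h$ is strictly positive. Dividing the assumed inequality by $h(x)>0$ then gives $w(x)\ge \tilde K_{\tilde\nu}(w^q)(x)+1$ on $A$, which is precisely the hypothesis of Theorem~\ref{thm-iteration} with $g(t)=t^q$.

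The one genuine technical point — and the step I expect to require the most care — is that Theorem~\ref{thm-iteration} is stated for functions valued in $[1,+\infty]$ and requires the inequality in the form $u(x)\ge K_\nu(g(u))(x)+1$ on a set $A$ with $\nu(A^c)=0$; I must confirm that all of these structural hypotheses survive the change of variables. The value constraint is handled by the observation $w\ge 1$ on $A$ just noted (and on $A^c$ we need no information, consistent with the theorem's formulation, since $\tilde\nu(A^c)=0$). A secondary subtlety is the $\sigma$-finiteness clause implicit in the cited estimates: one should note that $\tilde K(x,\cdot)$ being $\sigma$-finite with respect to $\tilde\nu$ is equivalent to $K(x,\cdot)$ being $\sigma$-finite with respect to $\nu$, since the two differ by the factor $h(y)^{q}/h(x)$ which is bounded above and below on compacta; but in fact Corollary~\ref{coro-iteration-1} is applied at points $x\in A$ where $u(x)<+\infty$, so $K_\nu(u^q)(x)<+\infty$, and then $K_\nu(h^q)(x)\le (\sup h/\inf_{\,\mathrm{supp}} u)^{\,q}\cdot(\ldots)$-type bounds give finiteness — more simply, $w<+\infty$ on $A$ already forces the relevant integrals to be finite, so no $\sigma$-finiteness obstruction arises at the points where the conclusion is claimed. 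With these checks in place the proof is a direct citation of Corollary~\ref{coro-iteration-1} followed by the identity $\tilde K_{\tilde\nu}1 = h^{-1}K_\nu(h^q)$.
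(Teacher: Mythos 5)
Your proposal is correct and is essentially the paper's own argument: divide by $h$, set $v=u/h$, apply Corollary \ref{coro-iteration-1} to the conjugated problem, and unwind the identity $\frac{1}{h(x)}K_{\nu}(h^q)(x)$. The only cosmetic difference is that the paper absorbs the factor $h^{q+1}(y)$ into a modified kernel $\hat{K}(x,y)=\tilde{K}(x,y)h^{q+1}(y)$ acting against the original measure $\nu$ (invoking Lemma \ref{lem-kernel-multiple} to transfer the weak maximum principle), whereas you absorb it into the measure $\tilde{\nu}=h^{q+1}\nu$ and use that the weak maximum principle for $\tilde{K}$ holds uniformly over all Radon measures; the computations, including the handling of the constraint $w\ge 1$ only on $A$ and the $\sigma$-finiteness at points of $A$, are the same.
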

\begin{remark}\rm
	The condition that $\tilde{K}$ satisfies the weak maximum principle is closely related to the so-called weak domination principle as in \cite{GV2}.
\end{remark}

Now we consider the quasi-metric property (\ref{g-alp-qm}) for general kernels. The  quasi-metric kernels will be shown to satisfy the weak maximum principle. In particular, the previous developed tools can then be applied to the Green kernel $G^{(\alpha)}$.

\begin{definition}\label{defi-quasi-metric}\rm
	We say a kernel $K$ on $X$ is quasi-metric, if $K$ is symmetric and there exists a constant $\kappa\geq1$ such that
	\begin{equation}\label{cond-qm}
		K(x,y)\wedge K(y,z)\leq \kappa K(x,z),\quad \mbox{for all $x,y,z\in X$}. \tag{qm}
	\end{equation}
\end{definition}
\begin{remark}\rm
	Consider $\rho(x,y)=\frac{1}{K(x,y
		)}$ for $x,y \in X$. Then  (\ref{cond-qm}) asserts that $\rho$ is a  possibly degenerate quasi-metric on $X$.
	It is clear that (\ref{cond-qm}) is equivalent to the so called (3G) inequality in the literature, up to an absolute constant multiple. The current form is  the most convenient for us.
\end{remark}

From (\ref{cond-qm}), an analogue of the classical Ptolemy type inequality holds.
\begin{lemma}\cite[Lemma 2.2]{FNV}\label{lem-Ptolemy} \rm
	Let $K$ be a quasi-metric kernel on $X$ with constant $\kappa\ge 1$. Then
	\begin{equation}\label{eq-Ptolemy}
		(K(x,y)K(o,z))\wedge(K(y,z)K(o,x))\le \kappa^2 (K(x,z)K(o,y)).
	\end{equation}
\end{lemma}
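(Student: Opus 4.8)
The plan is to reduce the four-point inequality to a case analysis that repeatedly invokes the two-point (qm) inequality $K(a,b)\wedge K(b,c)\le \kappa K(a,c)$. By symmetry of $K$, all the distances $\rho(a,b)=1/K(a,b)$ are well-defined (possibly $0$), and (qm) says $\rho$ is a quasi-metric with multiplicative constant $\kappa$: $\rho(a,c)\le \kappa\,(\rho(a,b)\vee\rho(b,c))$. Rewriting the claim in terms of $\rho$, we must show
\[
\rho(x,z)\,\rho(o,y)\le \kappa^2\,\bigl((\rho(x,y)\,\rho(o,z))\vee(\rho(y,z)\,\rho(o,x))\bigr).
\]
So it suffices to bound $\rho(x,z)$ and $\rho(o,y)$ separately by suitable maxima, then multiply.

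First I would apply (qm) to the triple $(x,y,z)$ to get $\rho(x,z)\le \kappa\,(\rho(x,y)\vee\rho(y,z))$, and to the triple $(o,y)$ via an intermediate point — here there is a genuine choice, and this is where the case split enters. Apply (qm) to $(o,x,y)$ and to $(o,z,y)$ to get
\[
\rho(o,y)\le \kappa\,(\rho(o,x)\vee\rho(x,y)),\qquad \rho(o,y)\le \kappa\,(\rho(o,z)\vee\rho(z,y)).
\]
Now distinguish cases according to which of $\rho(x,y),\rho(y,z)$ is larger and which of $\rho(o,x),\rho(o,z)$ is larger. For instance, if $\rho(x,y)\ge\rho(y,z)$, use the first bound $\rho(x,z)\le\kappa\rho(x,y)$ together with the bound $\rho(o,y)\le\kappa(\rho(o,z)\vee\rho(z,y))$; if the max on the right is $\rho(o,z)$ we immediately get $\rho(x,z)\rho(o,y)\le\kappa^2\rho(x,y)\rho(o,z)$, and if it is $\rho(z,y)\le\rho(x,y)$ one instead pairs with the other decomposition of $\rho(o,y)$ to land in the $\rho(y,z)\rho(o,x)$ term — here one uses $\rho(x,z)\le\kappa(\rho(x,y)\vee\rho(y,z))$ more carefully. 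The symmetric case $\rho(y,z)\ge\rho(x,y)$ is handled by swapping the roles of $x$ and $z$, which is legitimate since the right-hand side of the claimed inequality is symmetric under that swap (it exchanges the two products).

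The main obstacle is organizing the case analysis so that in each branch the two separate $\kappa$-factors genuinely land on the \emph{same} product (either $\rho(x,y)\rho(o,z)$ or $\rho(y,z)\rho(o,x)$), rather than producing a mixed term like $\rho(x,y)\rho(o,x)$ that does not appear on the right. The key observation that makes it work is that whenever a "wrong" intermediate distance shows up (say $\rho(x,y)$ dominates in the bound for $\rho(o,y)$ but we are trying to reach the $\rho(y,z)\rho(o,x)$ term), it is itself controlled by the other branch's distance through (qm), so one can always steer back. Since the statement is quoted from \cite{FNV}, I would simply cite that reference for the detailed bookkeeping and present only the reduction above.

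\qed
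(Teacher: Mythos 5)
Your reduction to the quasi-metric $\rho=1/K$ and the plan ``bound $\rho(x,z)$ and $\rho(o,y)$ separately by (qm) and multiply'' is the right general idea, but as written the sketch has a genuine gap, and it is exactly at the point you defer to the reference. You only ever bound $\rho(x,z)$ through the intermediate point $y$, i.e.\ $\rho(x,z)\le\kappa(\rho(x,y)\vee\rho(y,z))$, and $\rho(o,y)$ through $x$ or $z$. This toolkit is provably insufficient: take an ultrametric configuration ($\kappa=1$) in which $o,x,z$ are mutually at distance $\varepsilon$ while $y$ is at distance $1$ from all three. Then $\rho(x,z)\rho(o,y)=\varepsilon$ and $\max\{\rho(x,y)\rho(o,z),\rho(y,z)\rho(o,x)\}=\varepsilon$, so the claimed inequality holds with equality; but your bound for $\rho(x,z)$ through $y$ only gives $\rho(x,z)\le\kappa$, which is off by a factor $1/\varepsilon$. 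In this regime one \emph{must} route $x$--$z$ through $o$, a bound your case analysis never introduces. Relatedly, the ``mixed term'' obstacle you flag (ending up with $\rho(x,y)\rho(o,x)$, which can strictly dominate both admissible products) is real, and ``one can always steer back'' is not an argument; citing \cite{FNV} for ``the bookkeeping'' hands off precisely the part that needs proof.

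The paper's proof avoids all of this with a symmetry normalization that your split (which of $\rho(x,y),\rho(y,z)$ is larger, which of $\rho(o,x),\rho(o,z)$ is larger) does not capture: the inequality is invariant under $x\leftrightarrow z$ and $y\leftrightarrow o$, and these symmetries act transitively on the four quantities $K(x,y),K(y,z),K(o,z),K(o,x)$, so one may assume $K(x,y)$ is the largest of the four. Then two applications of (qm) give
\[
\kappa^2 K(x,z)K(o,y)\;\ge\;\bigl(K(x,y)\wedge K(y,z)\bigr)\cdot\bigl(K(x,y)\wedge K(x,o)\bigr)\;=\;K(y,z)K(o,x)\;\ge\;\bigl(K(x,y)K(o,z)\bigr)\wedge\bigl(K(y,z)K(o,x)\bigr),
\]
since the maximal value $K(x,y)$ is absorbed in both minima and what remains is exactly one of the two products on the left-hand side. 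Note that after the normalization the intermediate points used for the pair $(x,z)$ effectively vary with the configuration (in the example above it is $o$, not $y$), which is exactly the flexibility your fixed choice of intermediates lacks. To repair your write-up, either adopt this global-maximum normalization, or add the bound $\rho(x,z)\le\kappa(\rho(x,o)\vee\rho(o,z))$ to your list and carry out the full case analysis comparing across the two pairs, not just within them.
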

The  weak maximum principle and a variant of it then follow from the quasi-metric property.
\begin{lemma}\label{lem-quasi-weak}\cite[Lemma 3.5]{QV} \rm
	If $K$ is a quasi-metric kernel on $(X, d)$ with constant $\kappa\ge 1$, then $K$ satisfies  the weak maximum principle with constant $b=\kappa$.
\end{lemma}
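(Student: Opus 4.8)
The plan is to unwind the definition of the weak maximum principle. So let $\nu\in\mathcal{M}^{+}(X)$, let $A\subset X$ be Borel with $\nu$ concentrated on $A$, and assume $K_{\nu}1\le 1$ on $A$; I must show $K_{\nu}1(x)\le\kappa$ for every $x\in X$. Because $\nu$ is concentrated on $A$ we have $K_{\nu}1(x)=\int_{A}K(x,y)\,d\nu(y)$ for all $x$, so the task is to control the function $y\mapsto K(x,y)$ on $A$ by something whose integral against $\nu$ is already known to be $\le 1$. The natural candidate is $\kappa\,K(y_{0},\cdot)$ for a suitably chosen reference point $y_{0}\in A$, since the hypothesis gives $\int_{A}K(y_{0},y)\,d\nu(y)=K_{\nu}1(y_{0})\le 1$.

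The device that moves the base point from $x$ to a point of $A$ is the quasi-metric inequality (qm) itself (no Ptolemy-type refinement is needed). Applying (qm) to the triple $(y_{0},x,y)$ and using symmetry of $K$ gives $K(x,y_{0})\wedge K(x,y)\le\kappa\,K(y_{0},y)$ for every $y\in X$. If there were a point $y_{0}\in A$ at which $K(x,\cdot)$ attained its supremum $M:=\sup_{y\in A}K(x,y)$ over $A$, then for every $y\in A$ the left side would equal $K(x,y)$, hence $K(x,y)\le\kappa\,K(y_{0},y)$; integrating over $A$ against $\nu$ would give $K_{\nu}1(x)\le\kappa\,K_{\nu}1(y_{0})\le\kappa$, and we would be done.

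Since $M$ need not be attained and may be $+\infty$, I would replace $y_{0}$ by a sequence: if $M=0$ the claim is trivial (then $K(x,\cdot)\equiv 0$ on $A$), and otherwise choose $M_{n}\uparrow M$ with $M_{n}<M$ (take $M_{n}=n$ when $M=+\infty$) and pick $y_{n}\in A$ with $K(x,y_{n})>M_{n}$, which exists by the definition of the supremum. Then (qm) yields $K(y_{n},y)\ge\kappa^{-1}\big(M_{n}\wedge K(x,y)\big)$ for all $y\in A$, so, since $M_{n}\uparrow M$ and $K(x,y)\le M$ throughout $A$, $\liminf_{n}K(y_{n},y)\ge\kappa^{-1}\big(M\wedge K(x,y)\big)=\kappa^{-1}K(x,y)$ for every $y\in A$. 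Fatou's lemma together with $\int_{A}K(y_{n},y)\,d\nu(y)=K_{\nu}1(y_{n})\le 1$ then gives $\kappa^{-1}\int_{A}K(x,y)\,d\nu(y)\le\int_{A}\liminf_{n}K(y_{n},y)\,d\nu(y)\le\liminf_{n}\int_{A}K(y_{n},y)\,d\nu(y)\le 1$, that is $K_{\nu}1(x)\le\kappa$. As $x\in X$ was arbitrary, this is exactly the weak maximum principle with $b=\kappa$.

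I do not expect a serious obstacle here: the only genuine input is the choice of the triple $(y_{0},x,y)$ in (qm), and the remainder is bookkeeping — verifying the degenerate cases $M=0$ and $M=+\infty$ (where one checks $M_{n}\wedge K(x,y)\to M\wedge K(x,y)=K(x,y)$ even when $K(x,y)=+\infty$), confirming measurability of $y\mapsto K(y_{n},y)$ so that Fatou applies, and noting that the two uses of "$K(x,y)\le M$ on $A$" in identifying the $\liminf$ are legitimate by the very definition of $M$ as a supremum over $A$.
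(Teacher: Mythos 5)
Your proof is correct, and its core is the same as the paper's: pick a near-maximizer $y_0\in A$ of $K(x,\cdot)$ over $A$, use the quasi-metric inequality (with symmetry) on the triple $(y_0,x,y)$ to get $K(x,y)\le\kappa K(y_0,y)$ up to the truncation level, and integrate the hypothesis $K_\nu 1(y_0)\le 1$. The only real difference is the device for handling an unattained or infinite supremum: the paper truncates the kernel to $K_N=K\wedge N$, checks that $K_N$ is again quasi-metric with the same constant $\kappa$, runs the argument with a multiplicative $(1-\varepsilon)$ near-maximizer, and then lets $\varepsilon\to 0$ and $N\to\infty$ by monotone convergence; you instead take a sequence of near-maximizers $y_n$ with $K(x,y_n)>M_n\uparrow M$ and conclude via Fatou's lemma, using that $M_n\wedge K(x,y)\uparrow K(x,y)$ on $A$. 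Your route saves the small truncation lemma at the cost of the liminf bookkeeping; both are valid and yield the same constant $b=\kappa$.
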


\begin{lemma}\cite[Lemma 3.4]{QV}\label{lem-quasi-m} \rm
	Let $K$ be a positive, quasi-metric kernel with constant $\kappa\ge 1$. Fix $o\in X$ and $c>0$. Let $k(\cdot)=K(o,\cdot)\wedge c$. Consider $\tilde{K}: X\times X\rightarrow (0,+\infty]$ defined by
	\[\tilde{K}(x,y)=\frac{K(x,y
		)}{k(x)k(y)}, ~~ x,y\in X.\]
	Then for all $x,y,z\in X$,
	\begin{equation}\label{eq-tilde-K}
		\tilde{K}(x,y)\wedge \tilde{K}(y,z)\leq \kappa^2 \tilde{K}(x,z).
	\end{equation}
\end{lemma}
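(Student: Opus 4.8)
The plan is to clear denominators, reducing the claim to a pointwise inequality among values of $K$ alone, and then exploit that $k$ is built from two pieces, $K(o,\cdot)$ and the constant $c$, each controlled by a different tool. Since $K$ is positive and $c>0$, we have $k(w)=K(o,w)\wedge c\in(0,c]$ for every $w\in X$, so $k$ is everywhere positive and finite; multiplying the target inequality by the finite positive number $k(x)k(y)k(z)$ is therefore legitimate (even if some value of $K$ is $+\infty$, since multiplication by a positive finite scalar commutes with $\wedge$), and \eqref{eq-tilde-K} becomes equivalent to
\[
K(x,y)\,k(z)\ \wedge\ K(y,z)\,k(x)\ \le\ \kappa^2\,K(x,z)\,k(y).
\]

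Next I would expand $k(z)=K(o,z)\wedge c$ and $k(x)=K(o,x)\wedge c$, distribute the minima, and use that multiplication by the constant $c$ commutes with $\wedge$, to obtain
\[
K(x,y)\,k(z)\wedge K(y,z)\,k(x)=\Bigl[\bigl(K(x,y)K(o,z)\bigr)\wedge\bigl(K(y,z)K(o,x)\bigr)\Bigr]\wedge\Bigl[c\bigl(K(x,y)\wedge K(y,z)\bigr)\Bigr].
\]
In particular the left-hand side is $\le\bigl(K(x,y)K(o,z)\bigr)\wedge\bigl(K(y,z)K(o,x)\bigr)$, which by the Ptolemy-type inequality of Lemma \ref{lem-Ptolemy} (applied with the fixed point $o$) is $\le\kappa^2 K(x,z)K(o,y)$; and it is also $\le c\bigl(K(x,y)\wedge K(y,z)\bigr)$, which by \eqref{cond-qm} together with $\kappa\ge1$ is $\le c\kappa K(x,z)\le\kappa^2 cK(x,z)$. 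Combining the two bounds,
\[
K(x,y)\,k(z)\wedge K(y,z)\,k(x)\le\kappa^2 K(x,z)K(o,y)\ \wedge\ \kappa^2 cK(x,z)=\kappa^2 K(x,z)\bigl(K(o,y)\wedge c\bigr)=\kappa^2 K(x,z)\,k(y),
\]
which is exactly the reduced inequality; tracing the reduction backwards gives \eqref{eq-tilde-K}.

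I do not expect a genuine obstacle: the whole argument is a short manipulation once one observes that $k$ should be split as $K(o,\cdot)\wedge c$, with the ``$K(o,\cdot)$-part'' absorbed by the Ptolemy inequality and the ``$c$-part'' by the bare quasi-metric inequality \eqref{cond-qm}. The only points needing a little care are the bookkeeping with possibly infinite values of $K$ against the bounded positive function $k$ (handled by noting $k$ is positive and finite, so all scalings and minima behave well), and verifying that the minima distribute correctly over the products $K(x,y)k(z)$ and $K(y,z)k(x)$.
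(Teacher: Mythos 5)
Your proof is correct, but it is organized differently from the paper's. After the same first reduction (clearing the positive finite denominators $k(x)k(y)k(z)$ to get $K(x,y)k(z)\wedge K(y,z)k(x)\le \kappa^2 K(x,z)k(y)$), the paper proceeds by a case analysis on which of the four quantities $K(x,y), K(y,z), K(o,z), K(o,x)$ is maximal (two cases by symmetry, the second splitting further according to whether $K(x,y)\le c$), using only the bare inequality \eqref{cond-qm} at each step. You instead split $k=K(o,\cdot)\wedge c$ and distribute the minima, so the left side becomes $\bigl[(K(x,y)K(o,z))\wedge(K(y,z)K(o,x))\bigr]\wedge\bigl[c(K(x,y)\wedge K(y,z))\bigr]$; the first bracket is controlled by the Ptolemy-type inequality of Lemma \ref{lem-Ptolemy} and the second by \eqref{cond-qm} together with $\kappa\ge 1$, and recombining gives $\kappa^2K(x,z)\,(K(o,y)\wedge c)$. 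This is essentially the shortcut the authors themselves indicate in the remark following their proof (apply the Ptolemy inequality to a truncated kernel on $\{o,x,y,z\}$), except that your direct splitting avoids even having to verify that the truncated kernel is quasi-metric. What the two approaches buy: yours is shorter and modular, delegating the case analysis to Lemma \ref{lem-Ptolemy}; the paper's is self-contained, relying only on \eqref{cond-qm}. Your handling of the extended-value arithmetic (positivity and finiteness of $k$, distributivity of multiplication by finite positive scalars over $\wedge$, and the $+\infty$ cases) is careful and valid.
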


\section{Proof of Theorem \ref{thm-equiv-minimal}}\label{sect-equivalence}
A simple but crucial observation is that quasi-metric kernels satisfy certain minimality. Without appealing to the local Harnack inequality  as in \cite{GSV} or other extra assumptions,
we give a new proof of the following lemma. 

\begin{lemma}\label{lem-min}\rm
	Let $K$ be a lower semi-continuous quasi-metric positive kernel on $(X, d)$ with constant $\kappa\ge 1$. Fix $o\in X$ and $a>0$. Set $m(x)=K(o,x)\wedge a^{-1}$.
	Then for each positive measure $\omega$ of the form $\omega=f\nu$ with $f$ non-negative 
		 measurable and $\nu$ a Radon measure, we have for all $x\in X$,
	\[K_{\omega}1(x)\gtrsim m(x).\]
\end{lemma}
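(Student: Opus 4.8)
The plan is to avoid compactness and Harnack-type arguments entirely: I would extract from the quasi-metric property a one-line pointwise bound, after first locating a single set of positive $\omega$-mass on which $K(o,\cdot)$ is bounded away from $0$. The only hypotheses that really get used are positivity of $K$ (to ensure $K(o,\cdot)>0$ everywhere), the inequality \eqref{cond-qm}, and nontriviality of $\omega$ (reading ``positive measure'' as $\omega\not\equiv 0$; the implicit constant in $\gtrsim$ is then allowed to depend on $\omega$, along with $o$, $a$, $\kappa$).

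First I would choose a good super-level set of $K(o,\cdot)$. Since $K$ is positive (and lower semi-continuous), the Borel sets $A_n:=\{y\in X:K(o,y)>1/n\}$ are open and increase to all of $X$; hence by continuity from below of the Borel measure $\omega=f\nu$ together with $\omega(X)>0$, there is an $n_0$ with $\omega(A_{n_0})>0$. Writing $A:=A_{n_0}$ and $\beta:=1/n_0>0$, I then have $\omega(A)>0$ and $K(o,y)\ge\beta$ for every $y\in A$.

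Next I would apply \eqref{cond-qm} with the triple $(x,o,y)$, so that $o$ plays the role of the eliminated middle vertex: for every $x\in X$ and $y\in A$,
\[
K(x,y)\ \ge\ \frac{1}{\kappa}\bigl(K(x,o)\wedge K(o,y)\bigr)\ \ge\ \frac{1}{\kappa}\bigl(K(x,o)\wedge\beta\bigr),
\]
and integrating this over $A$ against $\omega$ gives, for all $x\in X$,
\[
K_\omega 1(x)\ \ge\ \int_A K(x,y)\,d\omega(y)\ \ge\ \frac{\omega(A)}{\kappa}\,\bigl(K(x,o)\wedge\beta\bigr).
\]
Finally I would compare truncation levels via the elementary inequality $s\wedge\beta\ge\min(1,a\beta)\,(s\wedge a^{-1})$, valid for all $s\ge 0$ (immediate by splitting into the cases $s\le a^{-1}\wedge\beta$, $\beta<s\le a^{-1}$, and $s>a^{-1}$). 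Taking $s=K(x,o)$ yields $K(x,o)\wedge\beta\ge\min(1,a\beta)\,m(x)$, hence $K_\omega 1(x)\ge c\,m(x)$ with $c=\omega(A)\min(1,a\beta)/\kappa>0$, which is the assertion.

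\textbf{Main obstacle.} There is essentially no analytic difficulty here; the only subtle point is that $\omega$ need not be compactly supported and $f$ need not be locally bounded, so one cannot simply minimize $K(o,\cdot)$ over a compact set carrying positive mass. The super-level-set device in the first step is precisely what sidesteps this, and — as the paper emphasizes — is what lets one dispense with the local Harnack inequality used in \cite{GSV}.
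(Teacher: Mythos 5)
Your proof is correct, and it does deviate from the paper's argument in the one place where the two proofs could differ: how one produces a set $A$ with $\omega(A)>0$ on which $K(o,\cdot)$ is bounded below. The paper first reduces to a compactly supported Radon measure (using $\sigma$-compactness of $X$ and the truncation $\mathbf{1}_A(f\wedge 1)\nu$) and then takes $\lambda=\min_{y\in \operatorname{supp}\omega}K(y,o)>0$, which is where lower semi-continuity and compactness are genuinely used; after that it runs a two-case analysis which, combined, is exactly your single inequality $\kappa K(x,y)\ge K(x,o)\wedge K(o,y)\ge K(x,o)\wedge\lambda$. Your super-level-set device ($A_{n}=\{K(o,\cdot)>1/n\}\uparrow X$ by positivity, plus continuity from below of the Borel measure $f\nu$) replaces that entire reduction, so your argument needs only measurability and positivity of $K(o,\cdot)$ -- not lower semi-continuity, not local finiteness of $\omega$, and no compact exhaustion -- and is thus slightly more elementary and marginally more general; the core quasi-metric step and the final truncation comparison $s\wedge\beta\ge\min(1,a\beta)\,(s\wedge a^{-1})$ match the paper's $\frac{\omega(A)}{\kappa}(\lambda\wedge K(x,o))\ge\frac{\omega(A)}{\kappa}\min\{\lambda a,1\}\,m(x)$. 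One cosmetic point: your constant $c=\omega(A)\min(1,a\beta)/\kappa$ is meaningless if $\omega(A)=+\infty$, but in that case your displayed integral bound already gives $K_\omega 1\equiv+\infty$ while $m\le a^{-1}$, so the conclusion holds trivially; a half-sentence noting this (or shrinking $A$ to have finite mass) would make the write-up airtight.
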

\begin{proof}
	{	First we treat the case that $\omega$ is a Radon measure.}
	Without loss of generality, suppose that  $\omega$ is of compact support, and let $A=\text{supp}(\omega)$.
	Since $K(\cdot,o)$ is lower semi-continuous and positive, it achieves $\min_{y\in A}K(y,o)=:\lambda>0$ on $A$.
	
	When $K(x,o)>\lambda$, we have
	\begin{eqnarray*}
		\kappa K(x,y)\geq K(x,o)\wedge K(y,o)\geq\lambda,\quad\mbox{for $y\in A$}.
	\end{eqnarray*}
	It follows that
	\begin{eqnarray}\label{har-1}
		K_{\omega}1(x)=\int_{A}K(x,y)d\omega(y)\geq \frac{\lambda}{\kappa}\omega(A).
	\end{eqnarray}
	
	While if $K(x,o)\leq \lambda$, we have
	\begin{eqnarray*}
		\kappa K(x,y)\geq K(x,o)\wedge K(y,o)=K(x,o),
	\end{eqnarray*}
	which yields
	\begin{eqnarray}\label{har-2}
		K_{\omega}1(x)=\int_{A}K(x,y)d\omega(y)\geq \frac{\omega(A)}{\kappa} K(x,o).
	\end{eqnarray}
	Combining with (\ref{har-1}) and (\ref{har-2}), we derive
	\begin{eqnarray*}
		K_{\omega}1(x)\geq \frac{\omega(A)}{\kappa}(\lambda \wedge K(x,o))\ge \frac{\omega(A)}{\kappa} \min\{\lambda a, 1\}m(x),
	\end{eqnarray*}
	which completes the proof in this case.
	
	 Now consider $\omega=f\nu$ as assumed. Since $X$ is $\sigma$-compact, there is a compact subset $A$ such that $\omega(A)>0$. Consider $\tilde{\omega}=\mathbf{1}_A(f\wedge 1)\nu$. It is clear that $\tilde{\omega}$ is a locally finite positive Borel measure, and hence a positive Radon measure. The assertion follows from the previous case.
\end{proof}
\begin{remark}\rm
	To the best of our knowledge, it is not clear whether the local Harnack inequality for the Laplace-Beltrami operator still holds for the fractional Laplacian.
\end{remark}

For Theorem \ref{thm-equiv-minimal}, the Green kernel $G^{(\alpha)}$ is assumed to be quasi-metric, so the potential theoretic tools from Section \ref{sect-potential} and  Lemma \ref{lem-min} apply.

Recall that we denote
\[G^{(\alpha)}_{\sigma}(f):=\int_M G^{(\alpha)}(\cdot,y) f(y) d \sigma(y).\]
For simplicity, we write $G^{(\alpha)}(f)$ for $G^{(\alpha)}_{\mu}(f)$.

\begin{proof}[Proof of Theorem \ref{thm-equiv-minimal}]
	We finish the proof by showing that ``(3)$\implies$(4)'', ``(4)$\implies$(2)'',  ``(2)$\implies$(1)'', and  ``(1)$\implies$(3)''.
	
	``(3)$\implies$(4)'': Let $A$ be the set of $x\in X$ such that \[m(x)\gtrsim  \int_{M}
	G^{(\alpha)}(x,y) m^q(y) d \sigma(y).
	\]
	By assumption that $\sigma$ is concentrated on $A$.  Since $G^{(\alpha)}(\cdot, \cdot)$ is quasi-metric, by Lemma \ref{lem-quasi-m} and Lemma \ref{lem-kernel-multiple}, the kernel $\hat{G}^{(\alpha)}(\cdot, \cdot)$ defined by
	\[\hat{G}^{(\alpha)}(x, y)=\frac{1}{m(x)}G^{(\alpha)}(x, y)m(y)^q\]
	satisfies the weak maximum principle. It follows that
	\[m(x)\gtrsim  \int_{M}
	G^{(\alpha)}(x,y) m^q(y) d \sigma(y),
	\] for each $x\in X$, with a possibly different constant.
	
	``(4)$\implies$(2)'': This is clear since $m(x)=G^{(\alpha)}(x,o)\wedge a^{-1}$ is lower semi-continuous,  and is positive and finite everywhere. The inequality (\ref{eq-non-local-integral-inequality}) holds for $cm$ with some suitable constant $c>0$.
	
	``(2)$\implies$(1)'': This is trivial.
	
	``(1)$\implies$(3)'': 	Suppose that $u$ is a positive solution to (\ref{eq-non-local-integral-inequality}) in the sense of Definition \ref{defi-int-ineq}. Then $u: M\rightarrow [0,+\infty]$ is  lower semi-continuous and we can find some measurable set $A\subseteq M$ with $\sigma(A^c)=0$ such that $u(x)\in (0,+\infty)$ and
	\begin{equation}
		u(x)\geq\int_{M}G^{(\alpha)}(x,y)u^{q}(y) d \sigma (y), 
	\end{equation}	
	for each $x\in A$. 
	Note that for each $x\in A$, \[+\infty>u(x)
	\ge\int_{M}G^{(\alpha)}(x,y)u^{q}(y) d \sigma (y)=\int_{A}G^{(\alpha)}(x,y)u^{q}(y) d \sigma (y).\]  Hence $G^{(\alpha)}(x,\cdot)$ is $\sigma$-finite with respect to $\nu$ for each $x\in A$.
	
	Fix some $\varepsilon\in (0,1)$, let $v=\varepsilon u$. Then
	\begin{equation}\label{blacktriangle}
		v\ge G^{(\alpha)}_{\sigma} (v^q)+(\varepsilon^{1-q}-1)G^{(\alpha)}_{\sigma} (v^q)
	\end{equation}
	holds on $A$.
	Since $v$ is lower semi-continuous and $\sigma$-a.e. positive,  by Lemma \ref{lem-min}, there is a constant $C>0$ such that
	\[(\varepsilon^{1-q}-1)G^{(\alpha)}_{\sigma} (v^q)=(\varepsilon^{1-q}-1)G^{(\alpha)}_{v^q\sigma} 1\ge C(\varepsilon^{1-q}-1) m.\]
	Hence $v$ satisfies the following inequality for each $x\in A$,
	\[v(x)\ge G^{(\alpha)}_{\sigma} (v^q)(x)+C m(x).\]
	By Lemma \ref{lem-quasi-m} and Lemma \ref{lem-quasi-weak}, the kernel $\tilde{G}^{(\alpha)}$ defined by
	\[\tilde{G}^{(\alpha)}(x,y)=\frac{G^{(\alpha)}(x,y)}{m(x)m(y)}, ~~x,y\in M\]
	satisfies the weak maximum principle with constant $b=\kappa^2$.
	
	
	By Theorem \ref{thm-iteration-h}, we then obtain the following estimate of $m$ for each $x\in A$
	\[G^{(\alpha)}_{\sigma} ((Cm)^q)(x)\le \frac{b}{q-1} C m(x). \]
	It simplifies to
	\[G^{(\alpha)}_{\sigma} (m^q)\lesssim  m\] on $A$,
	and the assertion follows.
\end{proof}

\section{Proof of Theorem \ref{thm-equiv-integrability}}\label{sect-equiv-integrability}
By Theorem \ref{thm-equiv-minimal}, it suffices to show that the combination of conditions (\ref{mathsection-1}) and (\ref{mathsection-2}) is equivalent to the following inequality
{\[m \gtrsim  G^{(\alpha)}_{\sigma}(m^q).\]}
\begin{proof}[Proof of the ``$\implies$'' direction.]
	As shown in the ``only if'' part of the proof for Theorem \ref{thm-equiv-minimal}, $G^{(\alpha)}$ is locally integrable with respect to $\sigma$.	
	
	Let $s=\frac{q}{q-1}$. Fix an arbitrary non-negative, bounded $f\not\equiv 0$ with compact support.
	Since  $G^{(\alpha)}$ is quasi-metric and consequently satisfies the weak maximum principle with constant $b=\kappa$, we can  apply Corollary \ref{cor-Hardy-p} to obtain
	\begin{align}
		\int_{M} (G^{(\alpha)}_{\sigma} (f))^s m^q  d  \sigma &\le sb^{s-1}\int_{M} G^{(\alpha)}_{\sigma} (f(G^{(\alpha)}_{\sigma} (f))^{s-1}) m^q  d  \sigma\notag\\&=sb^{s-1}\int_{M} f(G^{(\alpha)}_{\sigma} (f))^{s-1} G^{(\alpha)}_{\sigma}( m^q)  d  \sigma\notag\\ &\lesssim sb^{s-1}\int_{M} f(G^{(\alpha)}_{\sigma} (f))^{s-1} m  d  \sigma \label{eq5.1}\\
		&\lesssim sb^{s-1} \norm{f}_{L^s(\sigma)} \left(\int_{M} (G^{(\alpha)}_{\sigma} (f))^s m^q  d  \sigma\right)^{1/q}.\label{eq5.2}
	\end{align}
	Note that $f$ is bounded and compactly supported, while $m$ is lower semi-continuous and positive. For some constant $C_1>0$, we have  $f\le C_1 m^q$. Consequently
	\[G^{(\alpha)}_{\sigma} (f)\le C_1 G^{(\alpha)}_{\sigma} (m^q)\lesssim  C_1 m.\]
	Then we obtain
	\[\int_{M} f(G^{(\alpha)}_{\sigma} (f))^{s-1} m  d  \sigma \lesssim \int_M f C_1^{s-1} m^s d\sigma \lesssim C_1^{s-1} \frac{1}{a^s} \int_M f d\sigma<+\infty.\]
	It follows by \eqref{eq5.1} that
	\[\int_{M} (G^{(\alpha)}_{\sigma} (f))^s m^q  d  \sigma<+\infty,\]
	and hence  by \eqref{eq5.2}
	\[\int_{M} (G^{(\alpha)}_{\sigma} (f))^s m^q  d  \sigma\lesssim (sb^{s-1})^s \norm{f}_{L^s(\sigma)}^s.\]
	By density and Fatou's lemma, we see that the above inequality holds for each $f
	\in L^s(\sigma)$.
	
	By duality, for $g\in L^q(m^q\sigma)$,
	\begin{align*}
		\norm{G^{(\alpha)}_{m^q\sigma} (g )}_{ L^q(\sigma)}&=\sup_{f\in L^s(\sigma)\setminus\{0\}}\frac{\int_M G^{(\alpha)}_{\sigma} (g m^q) f d\sigma }{\norm{f}_{L^s(\sigma)}}\\
		&=\sup_{f\in L^s(\sigma)\setminus\{0\}}\frac{\int_M  g G^{(\alpha)}_{\sigma}( f ) m^q d\sigma }{\norm{f}_{L^s(\sigma)}}\\
		&\le \sup_{f\in L^s(\sigma)\setminus\{0\}}\frac{\norm{g}_{ L^q(m^q\sigma)}  \left(\int_{M} (G^{(\alpha)}_{\sigma} (f))^s m^q  d  \sigma\right)^{1/s}}{\norm{f}_{L^s(\sigma)}}\\
		&\lesssim sb^{s-1} \norm{g}_{ L^q(m^q\sigma)}.
	\end{align*}
	Choose $g=1_{A}$ for some compact subset $A$ of $M$ with $0<\int_A m^q d\sigma <+\infty$. By Lemma \ref{lem-min}, we have  $m\lesssim G^{(\alpha)}_{m^q\sigma} (1_{A} )$, and hence
	\[ \norm{m}_{L^q(\sigma)}\lesssim  \norm{G^{(\alpha)}_{m^q\sigma} (1_{A} )}_{L^q(\sigma)}\lesssim sb^{s-1} \norm{1_{A}}_{ L^q(m^q\sigma)}
	\lesssim sb^{s-1}\left(\int_A m^q d\sigma\right)^{1/q}<+\infty.\]
	This finishes the proof of (\ref{mathsection-1}).
	
	To prove (\ref{mathsection-2}), first note that $r>a$ and for each $x\in M$,
	\begin{align*}
		\int_{\{y\in M:\,G^{(\alpha)}(o,y)>r^{-1}\}}G^{(\alpha)}(x,y) d  \sigma (y) &\le \int_{\{y\in M:\,G^{(\alpha)}(o,y)>r^{-1}\}}r^q G^{(\alpha)}(x,y)m(y)^q d  \sigma (y) \\
		&\le Cr^q m(x).
	\end{align*}
	Denote by $A_r=\{y\in M:\,G^{(\alpha)}(o,y)>r^{-1}\}$ and $\sigma_r=1_{A_r}\sigma$. We can rewrite the above estimate as
	\[G^{(\alpha)}_{\sigma_r} 1(x)\le C r^q m(x),\quad \mbox{for all $x\in M$}.\]
	Taking the $q$-th power of both sides and applying $G^{(\alpha)}_{\sigma_r}$, we obtain that
	\[G^{(\alpha)}_{\sigma_r}((G^{(\alpha)}_{\sigma_r}1)^q)\le C^q r^{q^2} G^{(\alpha)}_{\sigma_r}(m^q)\le C^{q+1}r^{q^2} m. \]
	We can then iterate this procedure and apply
	Corollary \ref{cor-psi=tq} to obtain for each $x\in M$ and $k\ge 1$,
	\begin{align*}
		[G^{(\alpha)}_{\sigma_r}1(x)]^{1+q+q^2+\cdots+q^k}\leq b^{q+q^2+\cdots+q^k}c(q,k) C^{1+q+q^2+\cdots+q^k} r^{q^{k+1}} m,
	\end{align*}
	where
	$c(q,k)=\prod_{j=1}^{k}(1+q+q^{2}+\cdots +q^{j})^{q^{k-j}}.$
	
	It follows that
	\begin{eqnarray}
		G^{(\alpha)}_{\sigma_r}1\leq C c(q,k)^{\frac{q-1}{q^{k+1}-1}}b^{\frac{q^{k+1}-q}{q^{k+1}-1}}r^{\frac{q^{k+1}(q-1)}{q^{k+1}-1}}m^{\frac{q-1}{q^{k+1}-1}}.
	\end{eqnarray}
	Letting $k\to\infty$, we have
	\begin{eqnarray}
		G^{(\alpha)}_{\sigma_r}1\leq c_1 r^{q-1},
	\end{eqnarray}
	for some constant $c_1>0$,
	where we have used that for each $k\ge 1$
	\begin{align*}
		c(q,k)^{\frac{q-1}{q^{k+1}-1}}&=\prod_{j=1}^{k}(1+q+\cdots+q^j)^{\frac{q^{k-j}(q-1)}{q^{k+1}-1}} \\
		&\leq\prod_{j=1}^{\infty}(1+q+\cdots+q^j)^{q^{-j}} \\
		&\leq\prod_{j=1}^{\infty}q^{jq^{-j}}\prod_{j=1}^{k}(1+q^{-1}+\cdots+q^{-j})^{q^{-j}} \\
		&\leq q^{-\frac{1}{q(q-1)^2}}\prod_{j=1}^{\infty}\left(\frac{q}{q-1}\right)^{q^{-j}} \\
		&\leq  q^{-\frac{1}{q(q-1)^2}}\left(\frac{q}{q-1}\right)^{\frac{1}{q-1}}.
	\end{align*}
	
\end{proof}
	
	\begin{proof}[Proof of the ``$\impliedby$'' direction]
		{First note that (\ref{mathsection-2}) implies
			that $G^{(\alpha)}(x, \cdot)
			$ is locally intergrable with respect to $\sigma$. Indeed, since $G^{(\alpha)}(\cdot, \cdot)$ is positive everywhere and lower semi-continuous, we have
			\[M=\bigcup_{1/a>t>0}\{z\in M:\,G^{(\alpha)}(o,z)>t\},\] as the union of a nested family of open sets.
			Then for each compact subset $A$ of $M$, there is some $t_A\in (0,1/a)$ such that $A\subseteq\{y\in M:\,G^{(\alpha)}(o,y)>t_A\}$. Clearly (\ref{mathsection-2}) implies that
			\[\int_A G^{(\alpha)}(x, y) d\sigma(y)<+\infty\] for each $x\in M$.}
		
		It follows that for all $x\in M$,
		\begin{align}\label{est-1}
			\int_M G^{(\alpha)}(x,y) m^q(y) d \sigma(y)&=\int_{0}^{1/a}q t^{q-1}
			\int_{ \{z\in M:\,G^{(\alpha)}(o,z)>t\}}G^{(\alpha)}(x,y) d  \sigma (y) d t\nonumber\\
			&\lesssim \int_{0}^{1/a}q t^{q-1}\cdot
			(\frac{1}{t})^{q-1} d t\nonumber\\
			&=q/a.
		\end{align}
		
		Now fix an arbitrary $x\in M$ with $G^{(\alpha)}(x,o)<1/a$. Combining with (\ref{mathsection-1}), we obtain
		\begin{align}\label{est-2}
			\int_{\{y\in M: G^{(\alpha)}(x,y)\le \kappa^2 G^{(\alpha)}(o, x)\}} G^{(\alpha)}(x,y) m^q(y) d \sigma(y)&\le \kappa^2 G^{(\alpha)}(o, x)\int_M m^q(y) d \sigma(y)\nonumber\\
			&\lesssim \kappa^2 G^{(\alpha)}(o, x).
		\end{align}
		On the other hand, if $y\in M$ satisfies $G^{(\alpha)}(x,y)> \kappa^2 G^{(\alpha)}(o, x)$,
		we then have
		\[\kappa G^{(\alpha)}(o,y)\ge G^{(\alpha)}(x,y)\wedge G^{(\alpha)}(o,x)=G^{(\alpha)}(o,x), \]
		and
		\[\kappa G^{(\alpha)}(o,x)\ge G^{(\alpha)}(x,y)\wedge G^{(\alpha)}(o,y)\ge (\kappa^2 G^{(\alpha)}(o,x))\wedge G^{(\alpha)}(o,y), \]
		which implies that
		\[\kappa G^{(\alpha)}(o,x)\ge G^{(\alpha)}(o,y).\]
		By the definition of $m(y)$, we obtain
		\begin{align}\label{est-3}
			&\int_{\{y\in M: G^{(\alpha)}(x,y)> \kappa^2 G^{(\alpha)}(o, x)\}} G^{(\alpha)}(x,y) m^q(y) d \sigma(y)\nonumber\\
			\lesssim &(\kappa G^{(\alpha)}(o, x))^q \int_{\{y\in M: G^{(\alpha)}(x,y)> \kappa^2 G^{(\alpha)}(o, x)\}} G^{(\alpha)}(x,y)  d \sigma(y)
			\nonumber\\
			\lesssim &(\kappa G^{(\alpha)}(o, x))^q \cdot ( (\kappa^2 G^{(\alpha)}(o, x)))^{1-q}\nonumber\\
			\lesssim&\kappa^{-q} G^{(\alpha)}(o, x).
		\end{align}
		Combining with (\ref{est-1})-(\ref{est-3}), we complete the proof.
	\end{proof}
	
	\section{Proofs of Theorem \ref{thm-main}, Theorem \ref{thm-main-mu} and Theorem \ref{thm-Henon}}\label{sect-proofs-vol}
	
	Under conditions (VD) and (PI), the Green kernel $G^{(\alpha)}$ for $(-\Delta)^{\alpha} $ satisfies the following estimate
	\[G^{(\alpha)}(x,y)\asymp
	\int_{d(x,y)}^{+\infty }\frac{t^{2\alpha-1}d t}{\mu (B(x,t))},\quad \mbox{for all $x,y\in
		M$}.\]
	In the following we closely follow the proofs in Section 6 of \cite{GSV}, with some changes to fix a minor inaccuracy there.
	\begin{lemma}\label{G-alpha-qm}\rm
		Fix $o\in M$, and for each $%
		\rho >0$, set
		\begin{equation}
			R(\rho ):=\int_{\rho }^{+\infty }\frac{t^{2\alpha-1}dt}{\mu (B(o,t))}.  \label{R}
		\end{equation}%
		Then the following holds:
		\begin{equation}
			R(\rho )\leq CR(2\rho ),\quad \forall \rho >0.  \label{R1}
		\end{equation}
		Furthermore, for some constant $\kappa>0$ and for each $x,y,z\in M$,
		\[R(d(x,z))\ge \kappa \min\{R(d(x,y)), R(d(y,z))\}.\]
		
	\end{lemma}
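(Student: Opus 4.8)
The plan is to establish \eqref{R1} first by a dyadic comparison, and then to deduce the quasi-triangle inequality for $R$ from \eqref{R1} together with the obvious monotonicity of $R$.

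For \eqref{R1}, I would split $R(\rho)=\int_\rho^{2\rho}\frac{t^{2\alpha-1}\,dt}{\mu(B(o,t))}+R(2\rho)$ and show that the first term is dominated by a constant multiple of $R(2\rho)$. On $[\rho,2\rho]$ one has $\mu(B(o,t))\ge\mu(B(o,\rho))$, so $\int_\rho^{2\rho}\frac{t^{2\alpha-1}\,dt}{\mu(B(o,t))}\le\frac{1}{\mu(B(o,\rho))}\int_\rho^{2\rho}t^{2\alpha-1}\,dt$, which is comparable (with constants depending only on $\alpha$) to $\rho^{2\alpha}/\mu(B(o,\rho))$. On the other hand, keeping only the dyadic piece $[2\rho,4\rho]$ in $R(2\rho)$ and using $\mu(B(o,t))\le\mu(B(o,4\rho))$ there gives $R(2\rho)\ge\frac{1}{\mu(B(o,4\rho))}\int_{2\rho}^{4\rho}t^{2\alpha-1}\,dt$, again comparable to $\rho^{2\alpha}/\mu(B(o,4\rho))$. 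Two applications of (VD) yield $\mu(B(o,4\rho))\lesssim\mu(B(o,\rho))$, and combining these estimates gives $\int_\rho^{2\rho}\frac{t^{2\alpha-1}\,dt}{\mu(B(o,t))}\lesssim R(2\rho)$, hence $R(\rho)\le CR(2\rho)$. Here $R(\rho)<+\infty$ for every $\rho>0$, since the integrand is locally bounded away from $t=\infty$ and the tail is controlled by transience via Proposition \ref{prop-g-alph}.

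For the quasi-triangle inequality, I would note that $R$ is non-increasing (its integrand is positive) and argue by cases. Given $x,y,z\in M$, assume without loss of generality $d(x,y)\ge d(y,z)$, so that $\min\{R(d(x,y)),R(d(y,z))\}=R(d(x,y))$. The triangle inequality gives $d(x,z)\le d(x,y)+d(y,z)\le 2d(x,y)$, hence $R(d(x,z))\ge R(2d(x,y))$ by monotonicity, and then $R(2d(x,y))\ge C^{-1}R(d(x,y))$ by \eqref{R1}; taking $\kappa=C^{-1}$ finishes it.

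I do not expect any real obstacle: this is an elementary dyadic estimate. The only points that need care are the finiteness of $R(\rho)$ (which is exactly the transience assumption together with Proposition \ref{prop-g-alph}) and invoking (VD) at the right scale — comparing $\mu(B(o,4\rho))$ directly with $\mu(B(o,\rho))$ rather than only at consecutive dyadic levels, which is the inaccuracy in \cite{GSV} being corrected. Since only two-sided bounds for $t^{2\alpha-1}$ over dyadic intervals enter, the sign of $2\alpha-1$ plays no role.
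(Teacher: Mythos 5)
Your proof is correct and follows essentially the same route as the paper: the quasi-triangle inequality is obtained exactly as in the paper's proof, from the monotonicity of $R$, the triangle inequality (so that $d(x,z)\le 2d(x,y)$ when $d(x,y)\ge d(y,z)$), and \eqref{R1}. The only cosmetic difference is in \eqref{R1}, where the paper substitutes $t=2s$ and applies (VD) inside the integral, whereas you compare the annular piece of $R(\rho)$ over $[\rho,2\rho]$ with the piece of $R(2\rho)$ over $[2\rho,4\rho]$; both arguments reduce to (VD) at a fixed dyadic scale and are equally valid.
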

	\begin{proof}
		Note that $R(\rho )$ is strictly decreasing and continuous in $\rho $. By the
		doubling property (\ref{D}), letting $t=2s$ in (\ref{R}), we obtain
		\begin{equation*}
			R(2\rho )=\int_{2\rho }^{+\infty }\frac{t^{2\alpha-1}dt}{\mu (B(o,t))}=2^{2\alpha}\int_{\rho
			}^{+\infty }\frac{s^{2\alpha-1}ds}{\mu (B(o,2s))}\geq cR(\rho ).
		\end{equation*}
		
		Without loss of generality, we can assume that $x,y,z\in M$ are distinct. By the triangle inequality, either $d(x,y)\ge \frac{1}{2}d(x,z)$ or $d(y,z)\ge \frac{1}{2}d(x,z)$ holds.
		If $d(x,y)\ge \frac{1}{2}d(x,z)$, then
		\[R(d(x,y))\le R(\frac{1}{2}d(x,z)) \lesssim R(d(x,z)).\]
		Similarly if $d(y,z)\ge \frac{1}{2}d(x,z)$,
		then
		\[R(d(y,z))\le R(\frac{1}{2}d(x,z)) \lesssim  R(d(x,z)).\]
		Combining the above estimates, we complete the proof.
	\end{proof}
	\begin{proof}[Proof of Theorem \ref{thm-main}]
		We divide the proof into two steps.
		
		\textbf{Step 1.} The ``$\implies$'' direction:
		by Theorem \ref{thm-equiv-integrability}, if there exists a lower semi-continuous positive solution to (\ref{eq-non-local-integral-inequality}), then the following two inequalities hold:
		\begin{equation*}
			\int_{M}m(x)^{q} d  \sigma (x)<\infty , 
		\end{equation*}%
		and
		\begin{equation*}
			\sup_{x\in M}\int_{\{y\in M:\,G^{(\alpha)}(o,y)>r^{-1}\}}G^{(\alpha)}(x,y) d  \sigma (y)\lesssim r^{q-1},  
		\end{equation*}%
		for all $r>a$.	
		
		Applying \[G^{(\alpha)}(x,y)\asymp
		\int_{d(x,y)}^{+\infty }\frac{t^{2\alpha-1}d t}{\mu (B(x,t))},\quad\mbox{for all $x,y\in
			M$},\]
		we have $G^{(\alpha)}(o,\cdot)\asymp R(d(o,\cdot))$. Hence from (\ref{mathsection-1}) we deduce that
		\[\int_M \min\{R(d(o,x)),\frac{1}{a}\}^q d\sigma(x)<+\infty.\]
		Note that 
		\begin{align*}
			\int_M \min\{R(d(o,x)),\frac{1}{a}\}^q d\sigma(x)=q\int_{0}^{1/a}s^{q-1}\sigma\left(\{x\in M: R(d(o,x))>s\}\right) ds.
		\end{align*}
		Make a change of variables ``$s=R(\rho)$'', and set $\frac{1}{a}=R(\rho_0)$. Note that
		\[\{x\in M: R(d(o,x))>s\}=B(o,\rho).\]
		We have
		\begin{align*}
			q\int_{\rho_0}^{+\infty}\left(\int_{\rho}^{+\infty}\frac{t^{2\alpha -1
				}dt}{\mu(B(o,t))}\right)^{q-1}\frac{\sigma(B(o,\rho
				))}{\mu(B(o,\rho))}\rho^{2\alpha -1} dr=\int_M \min\{R(d(o,x)),\frac{1}{a}\}^q d\sigma(x)<+\infty.
		\end{align*}
		This proves (\ref{cond-int1}).
		
		Now we turn to (\ref{cond-int2}).
		Note that for some constant $c>0$, and for each $r>0$ and $y\in M$,
		\[R(d(o,y))>\frac{1}{cr} \implies G^{(\alpha)}(o,y)>\frac{1}{r},\]
		by the (VD) condition and estimates of $G^{(\alpha)}$.
		If $r>r_0$ for some $r_0$ sufficiently large, then by monotonicity and continuity, there is always a unique $\rho=\rho(r)>0$ such that
		$R(\rho)=\frac{1}{cr}$. Hence,
		\begin{equation*}
			R\left( d\left( o,y\right) \right)
			>R\left( \rho \right) \iff d\left( o,y\right) <\rho \implies	G^{(\alpha)}\left( o,y\right) >r^{-1}.
		\end{equation*}%
		Then for $\rho$ with $R(\rho)=\frac{1}{cr}$, we have
		\[B(o,\rho)\subseteq \{y\in M: 	G^{(\alpha)}\left( o,y\right) >r^{-1}\}.\]
		We can then estimate for $r>r_0$,
		\begin{align*}
			\int_{\{y\in M:\,G^{(\alpha)}(o,y)>r^{-1}\}}G^{(\alpha)}(x,y) d  \sigma (y)&\ge c\int_{B(o,\rho)}\int_{d(x,y)}^{+\infty }\frac{t^{2\alpha-1}d t}{\mu (B(x,t))} d \sigma(y)
			\\ &=c\int_{0}^{+\infty} \frac{\sigma(B(x,t)\cap B(o,\rho))}{\mu(B(x,t))}
			t^{2\alpha-1} dt.
		\end{align*}
		Hence for each $r>a$,
		\begin{align*}
			Cc^{1-q}R(\rho)^{1-q}&= C r^{q-1}\\
			&\ge \int_{\{y\in M:\,G^{(\alpha)}(o,y)>r^{-1}\}}G^{(\alpha)}(x,y) d  \sigma (y)\\
			&\ge c \int_{0}^{+\infty} \frac{\sigma(B(x,t)\cap B(o,\rho))}{\mu(B(x,t))}
			t^{2\alpha-1} dt.
		\end{align*}
		Then we have
		\begin{align*}
			\sup_{x\in M, \rho>\rho(r_0)}\left[ \int_{0}^{+\infty}\frac{\sigma (B(x,s)\cap B(o, \rho))}{\mu (B(x,s))}%
			\,s^{2\alpha-1} d  s\right] \,\left[ \int_{\rho}^{+\infty }\frac{t^{2\alpha-1} d  t}{\mu (B(o,t))}\right]
			^{q-1}<+\infty.
		\end{align*}
		
		
		\textbf{Step 2.} The ``$\impliedby$'' direction:
		since
		\begin{equation*}
			\int_{r_{0}}^{+\infty }\left[ \int_{r}^{+\infty }\frac{t^{2\alpha-1} d  t}{\mu (B(o,t))}%
			\right] ^{q-1}\frac{\sigma (B(o,r))}{\mu (B(o,r))}r^{2\alpha-1} d  r<\infty ,
		\end{equation*}%
		and
		\begin{align*}
			\sup_{x\in M, r>r_0}\left[ \int_{0}^{+\infty}\frac{\sigma (B(x,s)\cap B(o, r))}{\mu (B(x,s))}%
			\,s^{2\alpha-1} d  s\right] \,\left[ \int_{r}^{+\infty }\frac{t^{2\alpha-1} d  t}{\mu (B(o,t))}\right]
			^{q-1}<+\infty.
		\end{align*}
		
		It is sufficient to verify
		\begin{equation*}
			\int_{M}m(x)^{q} d  \sigma (x)<\infty ,
		\end{equation*}%
		and
		\begin{equation*}
			\sup_{x\in M}\int_{\{y\in M:\,G^{(\alpha)}(o,y)>r^{-1}\}}G^{(\alpha)}(x,y) d  \sigma (y)\lesssim
			r^{q-1},
		\end{equation*}%
		for all $r>a$.	
		
		As is just shown in step 1, using the same notations there, we have
		\begin{align*}
			\int_M \min\{R(d(o,x)),\frac{1}{a}\}^q d\sigma(x)=	q\int_{r_0}^{+\infty}(\int_{r}^{+\infty}\frac{t^{2\alpha -1
				}dt}{\mu(B(o,t))})^{q-1}\frac{\sigma(B(o,r
				))}{\mu(B(o,r))}r^{2\alpha -1} dr<+\infty.
		\end{align*}
		
		Similarly as in step 1, for some constant $c'>0$, there is always a unique $\rho=\rho(r)>0$ such that
		$R(\rho)=\frac{1}{c'r}$. Hence,
		\begin{equation*}
			R\left( d\left( o,y\right) \right)
			>R\left( \rho \right) \iff d\left( o,y\right) <\rho \impliedby	G^{(\alpha)}\left( o,y\right) >r^{-1}.
		\end{equation*}
		
		Then for $\rho$ with $R(\rho)=\frac{1}{c'r}$, we have
		\[B(o,\rho)\supseteq \{y\in M: 	G^{(\alpha)}\left( o,y\right) >r^{-1}\}.\]
		We can then estimate for $r>r_0$,
		\begin{align*}
			\int_{\{y\in M:\,G^{(\alpha)}(o,y)>r^{-1}\}}G^{(\alpha)}(x,y) d  \sigma (y)&\le c'\int_{B(o,\rho)}\int_{d(x,y)}^{+\infty }\frac{t^{2\alpha-1}d t}{\mu (B(x,t))} d \sigma(y)
			\\ &=c'\int_{0}^{+\infty} \frac{\sigma(B(x,t)\cap B(o,\rho))}{\mu(B(x,t))}
			t^{2\alpha-1} dt\\ &\lesssim  R(\rho)^{1-q}\\ &\lesssim  c'^{q-1} r^{q-1}.
		\end{align*}
		Hence, we complete the proof.
	\end{proof}
	Now we turn to Theorem \ref{thm-main-mu}. If $\sigma=\mu$, the conditions (\ref{cond-int1}) and (\ref{cond-int2}) amount to respectively
	\begin{equation}
		\int_{r_{0}}^{+\infty }\left[ \int_{r}^{+\infty }\frac{t^{2\alpha-1} d  t}{\mu (B(o,t))}%
		\right] ^{q-1}r^{2\alpha-1} d  r<\infty ,
		\label{cond-int1-mu}
	\end{equation}%
	and
	\begin{equation}
		\sup_{x\in M, r>r_0}\left[ \int_{0}^{+\infty}\frac{\mu (B(x,s)\cap B(o, r))}{\mu (B(x,s))}%
		\,s^{2\alpha-1} d  s\right] \,\left[ \int_{r}^{+\infty }\frac{t^{2\alpha-1} d  t}{\mu (B(o,t))}\right]
		^{q-1}<+\infty.
		\label{cond-int2-mu}
	\end{equation}%
	\begin{proposition}\label{prop-imply}\rm
		Assume that the fractional Laplacian $	(-\Delta)^{\alpha}$ is transient and the estimates on the Green function
		\[G^{(\alpha)}(x,y)\asymp
		\int_{d(x,y)}^{+\infty }\frac{t^{2\alpha-1}d t}{\mu (B(x,t))},\quad x,y\in
		M.\]
		Then (\ref{cond-int2-mu}) follows from (\ref{cond-int1-mu}).
	\end{proposition}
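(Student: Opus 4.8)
The plan is to combine the weak maximum principle for $G^{(\alpha)}$ with two elementary consequences of \eqref{cond-int1-mu}. Throughout I use the volume doubling property (VD), which is in force in the situations where this proposition is applied; together with the assumed Green estimate it makes $G^{(\alpha)}$ quasi-metric (Corollary~\ref{g-alp-qm}), so that $G^{(\alpha)}$ satisfies the weak maximum principle with some constant $\kappa\ge 1$ (Lemma~\ref{lem-quasi-weak}). The first observation I would record, via Tonelli's theorem and the Green estimate, is that for every $x\in M$ and $r>0$,
\[
\int_{B(o,r)}G^{(\alpha)}(x,y)\, d\mu(y)\asymp \int_{0}^{+\infty}\frac{\mu\big(B(x,s)\cap B(o,r)\big)}{\mu(B(x,s))}\,s^{2\alpha-1}\, d s ,
\]
so \eqref{cond-int2-mu} is equivalent to $\displaystyle\sup_{x\in M,\ r>r_0}R(r)^{q-1}\int_{B(o,r)}G^{(\alpha)}(x,y)\,d\mu(y)<+\infty$, with $R$ as in \eqref{R}. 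Applying the weak maximum principle to the Radon measure $\mathbf{1}_{B(o,r)}\mu$, which is concentrated on the Borel set $B(o,r)$, reduces this (after rescaling, at the cost of the factor $\kappa$) to bounding $\int_{B(o,r)}G^{(\alpha)}(x,y)\,d\mu(y)$ only for $x\in B(o,r)$.

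The analytic heart of the argument is to extract from \eqref{cond-int1-mu} the two bounds
\[
r^{2\alpha}R(r)^{q-1}\lesssim 1\qquad\text{and}\qquad \mu(B(o,r))\,R(r)^{q}\lesssim 1 ,\qquad r>r_0 .
\]
The first follows from the monotonicity of $R$: for $r>2r_0$, $r^{2\alpha}R(r)^{q-1}\asymp R(r)^{q-1}\int_{r/2}^{r}s^{2\alpha-1}\,d s\le \int_{r/2}^{+\infty}R(s)^{q-1}s^{2\alpha-1}\,d s<+\infty$, while for $r_0<r\le 2r_0$ the quantity is bounded by compactness. For the second I would change variables $u=R(r)$, using that $R$ is continuous and strictly decreasing with $R'(r)=-r^{2\alpha-1}/\mu(B(o,r))$; with $V(u):=\mu\big(B(o,R^{-1}(u))\big)$, a non-increasing function of $u$, condition \eqref{cond-int1-mu} reads $\int_0^{R(r_0)}u^{q-1}V(u)\,d u<+\infty$, and monotonicity of $V$ then gives $V(v)\,(v/2)^{q}\lesssim\int_{v/2}^{v}t^{q-1}V(t)\,d t\le\int_{0}^{R(r_0)}t^{q-1}V(t)\,d t$ for $0<v\le R(r_0)$, that is $\mu(B(o,r))R(r)^{q}\lesssim 1$ for all $r\ge r_0$.

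It remains to carry out the local estimate. For $x\in B(o,r)$ one has $B(o,r)\subset B(x,2r)$, and the Green estimate together with Tonelli yields
\[
\int_{B(o,r)}G^{(\alpha)}(x,y)\,d\mu(y)\le \int_{B(x,2r)}G^{(\alpha)}(x,y)\,d\mu(y)\asymp \frac{(2r)^{2\alpha}}{2\alpha}+\mu(B(x,2r))\int_{2r}^{+\infty}\frac{t^{2\alpha-1}}{\mu(B(x,t))}\,d t .
\]
By (VD), for $x\in B(o,r)$ and $t\ge 2r$ the balls $B(x,t)$ and $B(o,t)$ have comparable measure, so $\int_{2r}^{+\infty}t^{2\alpha-1}/\mu(B(x,t))\,d t\lesssim R(2r)\le R(r)$, and likewise $\mu(B(x,2r))\le\mu(B(o,3r))\lesssim\mu(B(o,r))$. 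Hence $\int_{B(o,r)}G^{(\alpha)}(x,y)\,d\mu(y)\lesssim r^{2\alpha}+\mu(B(o,r))R(r)$ whenever $x\in B(o,r)$; multiplying by $R(r)^{q-1}$ and invoking the two bounds above produces a uniform constant for $r>r_0$. The weak maximum principle then upgrades this bound to all $x\in M$, and translating back through the identity of the first paragraph gives \eqref{cond-int2-mu}.

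The step I expect to demand the most care is recognizing $\mu(B(o,r))R(r)^{q}\lesssim 1$ as the correct consequence of \eqref{cond-int1-mu}, together with the mild bookkeeping needed for the inverse function of $R$. The other genuine subtlety is that a direct estimate of $\int_{B(o,r)}G^{(\alpha)}(x,y)\,d\mu(y)$ for $x$ far from $o$ is delicate — there $\mu(B(x,t))$ need not be comparable to $\mu(B(o,t))$ — and it is precisely the weak maximum principle that lets us bypass this by localizing near $o$.
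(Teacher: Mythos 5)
Your proposal is correct, and it reaches the conclusion through the same two quantitative consequences of (\ref{cond-int1-mu}) that the paper isolates, namely the boundedness of $r^{2\alpha}R(r)^{q-1}$ and of $\mu(B(o,r))R(r)^{q}$ (the paper's (\ref{bdd-1}) and (\ref{bdd-2})), but the surrounding mechanism is genuinely different. You derive the second bound by a continuous change of variables $u=R(r)$ and monotonicity of $u\mapsto\mu(B(o,R^{-1}(u)))$, whereas the paper discretizes dyadically and invokes the elementary sequence Lemma \ref{lem-elementary}; both are sound, and yours is arguably shorter (one can even avoid the inverse function altogether by integrating (\ref{cond-int1-mu}) over $[r,r']$ with $R(r')=R(r)/2$). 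More substantially, for uniformity in $x$ you localize to $x\in B(o,r)$ and then invoke the weak maximum principle for the quasi-metric kernel $G^{(\alpha)}$ (Corollary \ref{g-alp-qm} and Lemma \ref{lem-quasi-weak}, applied to $\mathbf{1}_{B(o,r)}\mu$ after rescaling), while the paper argues directly by splitting into the cases $d(x,o)\le 2r$ and $d(x,o)>2r$: in the far case the integrand vanishes for $s<d(x,o)-r$, and for $s\gtrsim d(x,o)$ the one-sided comparison $\mu(B(x,s))\gtrsim\mu(B(o,s))$ holds by (VD), so your remark that the far regime is delicate is a bit overstated — it is handled by a two-line estimate — but your WMP route is a legitimate and clean alternative that trades that case analysis for the Section \ref{sect-potential} machinery. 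One shared caveat: both arguments use (VD) beyond the literal hypotheses of the proposition (you for quasi-metricity and the ball comparisons, the paper for its comparisons and discretization); you flag this explicitly, and it is indeed in force wherever the proposition is applied.
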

	We first show an elementary lemma.
	\begin{lemma}\label{lem-elementary}\rm
		Let $\{a_k\}_{k\ge 1}, \{u_k\}_{k\ge 1}$ be two sequences of positive numbers. Assume that $\{a_k\}_{k\ge 1}$ is non-decreasing, and that $\{u_k\}_{k\ge 1}$ is non-increasing with $\lim\limits_{k\rightarrow\infty} u_k =0$. If
		\[\sum_{k=1}^{\infty} a_k(u_k-u_{k+1})<+\infty,\]
		then  $\lim\limits_{k\rightarrow\infty} a_k u_k =0$. In particular, $\{a_k u_k\}_{k\ge 1}$ is bounded.
	\end{lemma}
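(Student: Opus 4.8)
The plan is to run an Abel-summation (telescoping) argument and then estimate a tail of the convergent series. First I would record the identity
\[
u_k=\sum_{j=k}^{\infty}(u_j-u_{j+1}),
\]
which is valid because $\{u_k\}$ is non-increasing — so each summand $u_j-u_{j+1}$ is non-negative — and because $\lim_{k\to\infty}u_k=0$, which makes the telescoping sum converge to $u_k$. Note all quantities involved are non-negative, so no issue with rearrangement arises.

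Next I would multiply this identity by $a_k$ and exploit the monotonicity of $\{a_k\}$: since $a_k\le a_j$ for every $j\ge k$ and $u_j-u_{j+1}\ge 0$, we get
\[
a_k u_k=\sum_{j=k}^{\infty}a_k(u_j-u_{j+1})\le\sum_{j=k}^{\infty}a_j(u_j-u_{j+1}).
\]
The right-hand side is the $k$-th tail of the series $\sum_{j=1}^{\infty}a_j(u_j-u_{j+1})$, which is assumed to be finite; hence the tails tend to $0$ as $k\to\infty$. Therefore $a_k u_k\to 0$, and in particular $\{a_k u_k\}_{k\ge1}$ is bounded, being a convergent sequence of positive numbers.

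I do not anticipate any real obstacle here: the only point that needs a word of care is the justification of the telescoping identity, which relies precisely on the hypothesis $u_k\to 0$ (and on monotonicity to ensure termwise non-negativity, so the interchange of the sum with the bound $a_k\le a_j$ is legitimate). Everything else is elementary.
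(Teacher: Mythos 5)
Your proof is correct and is essentially the same argument as the paper's: the paper bounds $a_k(u_k-u_{m+1})$ by the partial sum $\sum_{l=k}^{m}a_l(u_l-u_{l+1})$ and lets $m\to\infty$, which is just your telescoping identity plus the monotonicity bound $a_k\le a_j$, concluding via the vanishing of the tails of the convergent series. No gaps; nothing further to add.
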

	\begin{proof}
		For all $m>k\ge 1$, we have
		\[\sum_{l=k}^{m} a_l(u_l-u_{l+1})\ge a_k (u_{k}-u_{m+1}).\]
		It follows that
		\[a_k u_k \le\sum_{l=k}^{m} a_l(u_l-u_{l+1})+ a_k u_{m+1}.\]
		Letting $m\rightarrow \infty$, we have
		\[a_k u_k \le\sum_{l=k}^{\infty} a_l(u_l-u_{l+1}).\]
		Then $\lim\limits_{k\rightarrow\infty} a_k u_k =0$.
	\end{proof}
	
	\begin{proof}[Proof of Proposition \ref{prop-imply}]
		Assume the transience of $G^{(\alpha)}$, that is
		\[\int_{1}^{+\infty }\frac{t^{2\alpha-1} d  t}{\mu (B(o,t))}<+\infty,\]
		and assume also
		\begin{equation*}
			\int_{r_{0}}^{+\infty }\left[ \int_{r}^{+\infty }\frac{t^{2\alpha-1} d  t}{\mu (B(o,t))}%
			\right] ^{q-1}r^{2\alpha-1} d  r<\infty.
		\end{equation*}
		Let us show
		 \[
			\sup_{x\in M,r>r_0}\left[ \int_{0}^{+\infty}\frac{\mu (B(x,s)\cap B(o, r))}{\mu (B(x,s))}%
			\,s^{2\alpha-1} d  s\right] \,\left[ \int_{r}^{+\infty }\frac{t^{2\alpha-1} d  t}{\mu (B(o,t))}\right]
			^{q-1}<\infty.\]
		We consider two cases:
		\begin{enumerate}
			\item $d(x, o)\le 2r$;
			\item $d(x, o)> 2r$.
		\end{enumerate}
		Note that in the first case, $B(x, s)\supseteq B(o,r)$ if $s> 3r$.
		We have
		\begin{align*}
			&\int_{0}^{+\infty}\frac{\mu (B(x,s)\cap B(o, r))}{\mu (B(x,s))}  s^{2\alpha-1} d  s\\
			\le & \int_{0}^{3r} s^{2\alpha-1} d  s + \int_{3r}^{+\infty}\frac{\mu ( B(o, r))}{\mu (B(x,s))}  s^{2\alpha-1} d  s.
		\end{align*}
		In the second summand above, $d(x,o)\le 2r< s$. Since $B(x, s)\subseteq B(o, 2s)$ and $B(o, s)\subseteq B(x, 2s)$, and by (VD),  we derive
		\[\mu (B(x,s))\asymp\mu (B(o,s)).\]
		Hence
		\begin{align}\label{bdd-0}
		\int_{0}^{+\infty}\frac{\mu (B(x,s)\cap B(o, r))}{\mu (B(x,s))}  s^{2\alpha-1} d  s \lesssim r^{2\alpha} +\mu ( B(o, r))\int_{3r}^{+\infty}\frac{1}{\mu (B(o,s))}  s^{2\alpha-1} d  s.
		\end{align}
		
		By the doubling property of the quantities involved, we have
		\begin{align*}
			&\sum_{k=1}^{\infty}(2^k r_0)^{2\alpha}\left[\sum_{l=k}^{\infty}\frac{(2^l r_0)^{2\alpha}}{\mu(B(o, 2^l r_0))}\right]^{q-1} \\
			\asymp	&\int_{r_{0}}^{+\infty }\left[ \int_{r}^{+\infty }\frac{t^{2\alpha-1} d  t}{\mu (B(o,t))}%
			\right] ^{q-1}r^{2\alpha-1} d  r<\infty.
		\end{align*}
		It is then direct to see that
		\begin{equation}\label{bdd-1}
			r^{2\alpha}\left[ \int_{r}^{+\infty }\frac{t^{2\alpha-1} d  t}{\mu (B(o,t))}%
			\right] ^{q-1}<+\infty.
		\end{equation}
		For $2^k r_0 \le r\le 2^{k+1} r_0$, we have
		\begin{align*}
			&\mu(B(o,r))\int_{3r}^{+\infty}\frac{1}{\mu (B(o,s))}  s^{2\alpha-1} d  s\left[ \int_{r}^{+\infty }\frac{t^{2\alpha-1} d  t}{\mu (B(o,t))}%
			\right] ^{q-1}\\ \asymp	&\mu(B(o,r))\left[ \int_{r}^{+\infty }\frac{t^{2\alpha-1} d  t}{\mu (B(o,t))}%
			\right] ^{q}\\
			\asymp &\mu(B(o, 2^k r_0))\left[\sum_{l=k}^{\infty}\frac{(2^l r_0)^{2\alpha}}{\mu(B(o, 2^l r_0))}\right]^{q}.
		\end{align*}
		Setting
		\[a_k =\mu(B(o, 2^k r_0)),~~~~ v_k = \sum_{l=k}^{\infty}\frac{(2^l r_0)^{2\alpha}}{\mu(B(o, 2^l r_0))}.\]
		Then we have
		\begin{align*}
			+\infty>& q\sum_{k=1}^{\infty} (2^k r_0)^{2\alpha} v_k^{q-1}\\
			=&q\sum_{k=1}^{\infty} a_k v_k^{q-1}(v_k -v_{k+1})\\\ge &\sum_{k=1}^{\infty} a_k (v_k^q -v_{k+1}^q).
		\end{align*}
		By Lemma \ref{lem-elementary}, we see that $\{a_k v_k^q\}_{k\ge 1}$  is bounded, and hence
		\begin{equation}\label{bdd-2}
			\mu(B(o,r))\left[ \int_{r}^{+\infty }\frac{t^{2\alpha-1} d  t}{\mu (B(o,t))}
			\right] ^{q}<+\infty.
		\end{equation}
		It follows from (\ref{bdd-0}), (\ref{bdd-1}), and  (\ref{bdd-2}) that
		\[
		\left[ \int_{0}^{+\infty}\frac{\mu (B(x,s)\cap B(o, r))}{\mu (B(x,s))}%
		\,s^{2\alpha-1} d  s\right] \,\left[ \int_{r}^{+\infty }\frac{t^{2\alpha-1} d  t}{\mu (B(o,t))}\right]
		^{q-1}\] is bounded.
		
		Now we consider the second case. Since $d(x,o)>2r$, $B(x,s)\cap B(o,r)=\emptyset$ if $s<d(x,o)-r$. Hence, we can estimate
		\begin{align*}
			&\int_{0}^{+\infty}\frac{\mu (B(x,s)\cap B(o, r))}{\mu (B(x,s))}  s^{2\alpha-1} d  s\\
			\le &  \int_{d(x,o)-r}^{+\infty}\frac{\mu ( B(o, r))}{\mu (B(x,s))}  s^{2\alpha-1} d  s\\
			\lesssim &  \int_{\frac{1}{2} d(x,o)}^{+\infty}\frac{\mu ( B(o, r))}{\mu (B(o,s))}  s^{2\alpha-1} d  s\\ \lesssim
			&   \mu ( B(o, r)) \int_{r}^{+\infty}\frac{ s^{2\alpha-1}}{\mu (B(o,s))}  d  s.
		\end{align*}
		Applying the same argument as in the first case, we obtain that \[
		\mu(B(o,r))\left[ \int_{r}^{+\infty }\frac{t^{2\alpha-1} d  t}{\mu (B(o,t))}%
		\right] ^{q}\] is bounded. Hence \[
		\left[ \int_{0}^{+\infty}\frac{\mu (B(x,s)\cap B(o, r))}{\mu (B(x,s))}%
		\,s^{2\alpha-1} d  s\right] \,\left[ \int_{r}^{+\infty }\frac{t^{2\alpha-1} d  t}{\mu (B(o,t))}\right]
		^{q-1}\] is bounded as well. 
	\end{proof}

	For the proof of Theorem \ref{thm-main-mu},
	we need the following lemma, which is a simple variant of  \cite[Lemma 6.2]{GSV}.
	
	\begin{lemma}\label{prop-s}\rm Let $s\in (0,1)$, and let $\phi :\,(0,+\infty )\rightarrow
		(0,+\infty )$ be a non-increasing function. Then there exists a positive
		constant $C=C(s)$ such that, for all $r>0$,
		\begin{equation}
			\left( \int_{r}^{\infty }\phi (t)\,t^{2\alpha-1}\,dt\right) ^{s}\leq C\int_{r}^{\infty
			}\phi (t)^{s}\,t^{2\alpha s-1}\,dt+Cr^{2\alpha s}\,\phi (r)^{s}.  \label{est-decr}
		\end{equation}
	\end{lemma}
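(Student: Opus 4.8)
The inequality \eqref{est-decr} is an elementary real-variable estimate, and the natural strategy is to integrate by parts (or rather, to use a discretization / dyadic decomposition) so that the left-hand side is expressed in terms of the tail integral at the level where $\phi$ is essentially constant. I would first reduce to the case $r>0$ fixed and write $I(r)=\int_r^\infty \phi(t)t^{2\alpha-1}\,dt$, which is finite (if not, there is nothing to prove). Since $s\in(0,1)$, the key device is the superadditivity failure of $x\mapsto x^s$: for a sum $\sum_k c_k$ of non-negative terms one has $(\sum_k c_k)^s\le \sum_k c_k^s$. So I would split $[r,\infty)$ into dyadic annuli $[2^j r, 2^{j+1} r)$, $j\ge 0$, set $c_j=\int_{2^j r}^{2^{j+1}r}\phi(t)t^{2\alpha-1}\,dt$, and get $I(r)^s\le \sum_{j\ge 0} c_j^s$.

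\textbf{Main estimate on each annulus.} On $[2^j r, 2^{j+1}r)$ the function $\phi$ is non-increasing, so $c_j\le \phi(2^j r)\int_{2^j r}^{2^{j+1}r}t^{2\alpha-1}\,dt \asymp \phi(2^j r)(2^j r)^{2\alpha}$, hence $c_j^s\lesssim \phi(2^j r)^s (2^j r)^{2\alpha s}$. Now I would recognize $\phi(2^j r)^s (2^j r)^{2\alpha s}$ as comparable to an integral: again using monotonicity of $\phi$, for $j\ge 1$, $\phi(2^j r)^s \le \phi(t)^s$ for $t\in[2^{j-1}r,2^j r]$, and $\int_{2^{j-1}r}^{2^j r} t^{2\alpha s-1}\,dt \asymp (2^j r)^{2\alpha s}$ (here $2\alpha s>0$ so the integral converges at the lower end and is comparable to the value at the upper endpoint). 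Therefore $c_j^s \lesssim \int_{2^{j-1}r}^{2^j r}\phi(t)^s t^{2\alpha s-1}\,dt$ for $j\ge 1$, and summing over $j\ge 1$ telescopes to $\lesssim \int_r^\infty \phi(t)^s t^{2\alpha s-1}\,dt$. The single term $j=0$ is handled directly: $c_0^s\lesssim \phi(r)^s r^{2\alpha s}$, which is exactly the second term on the right-hand side of \eqref{est-decr}. Collecting the two contributions gives \eqref{est-decr} with a constant $C=C(s)$ depending only on $s$ (and on $\alpha$, which is fixed throughout).

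\textbf{Expected obstacle.} There is no serious obstacle; this is routine. The only point requiring a little care is the bookkeeping of constants in the comparisons $\int_{2^{j-1}r}^{2^j r}t^{2\alpha s-1}\,dt \asymp (2^j r)^{2\alpha s}$ and the shift of index $j\mapsto j-1$ used to convert the discrete sum of boundary values into an integral — one must make sure the resulting constant is independent of $r$ and of $\phi$, which it is because all the annuli are scale-invariant copies of a fixed one. An alternative, perhaps cleaner, route avoiding dyadic sums entirely is to substitute $t = r\tau$ to normalize $r=1$ (the estimate is scale-invariant once one also rescales $\phi$), then note that on $[1,\infty)$ one has $t^{2\alpha-1}\le t^{2\alpha-1}$ trivially but more usefully split $I(1) = \int_1^\infty \phi(t) t^{2\alpha(1-s)}\cdot t^{2\alpha s - 1}\,dt$ and apply the elementary inequality $\left(\int f g\right)^s \le \left(\int f^s g\right)\left(\int\cdot\right)^{\dots}$; however, that forces an auxiliary Hölder-type step that is no shorter, so I would stick with the dyadic decomposition above.
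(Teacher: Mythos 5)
Your dyadic argument is correct: the subadditivity $(\sum_j c_j)^s\le\sum_j c_j^s$ for $s\in(0,1)$, the bound $c_j\lesssim \phi(2^jr)(2^jr)^{2\alpha}$ from monotonicity of $\phi$, and the conversion of the endpoint values $\phi(2^jr)^s(2^jr)^{2\alpha s}$ ($j\ge1$) back into $\int_{2^{j-1}r}^{2^jr}\phi(t)^st^{2\alpha s-1}\,dt$ all go through with constants depending only on $s$ and the fixed $\alpha$, and the chain of inequalities is valid in the extended reals, so even your preliminary remark that one may assume $I(r)<\infty$ (which, strictly speaking, is not a "nothing to prove" situation, since an infinite left-hand side requires the right-hand side to be infinite) is rendered harmless by the estimates themselves. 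This is, however, a genuinely different route from the paper's. The paper works continuously: it writes $\bigl(\int_r^\infty\phi(t)t^{2\alpha-1}dt\bigr)^s=s\int_r^\infty\bigl(\int_r^t\phi(\tau)\tau^{2\alpha-1}d\tau\bigr)^{s-1}\phi(t)t^{2\alpha-1}dt$, uses $\phi(\tau)\ge\phi(t)$ together with $s-1<0$ to replace the inner integral by $\phi(t)\int_r^t\tau^{2\alpha-1}d\tau$, arriving at $s(2\alpha)^{1-s}\int_r^\infty(t^{2\alpha}-r^{2\alpha})^{s-1}\phi(t)^st^{2\alpha-1}dt$, and then splits this at $t=2r$: the far piece is absorbed into $\int_r^\infty\phi(t)^st^{2\alpha s-1}dt$, while the near piece, after the substitution $\xi=t^{2\alpha}-r^{2\alpha}$, produces exactly the boundary term $r^{2\alpha s}\phi(r)^s$. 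So both proofs exploit the same two ingredients (monotonicity of $\phi$ and the concavity/subadditivity of $x\mapsto x^s$), but the paper's version packages them through the fundamental theorem of calculus and a change of variables, yielding explicit constants in one pass, whereas your discretization avoids any calculus identity at the cost of dyadic bookkeeping; either is a complete and acceptable proof.
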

	
	\begin{proof}
		We have
		\begin{align*}
			\left( \int_{r}^{\infty }\phi (t)\,t^{2\alpha-1}\,dt\right) ^{s}& =s\,\int_{r}^{\infty
			}\left( \int_{r}^{t}\phi (\tau )\,\tau^{2\alpha-1} \,d\tau \right) ^{s-1}\phi (t)\,t^{2\alpha-1}\,dt
			\\
			& \leq s\,\int_{r}^{\infty }\left( \int_{r}^{t}\,\tau^{2\alpha-1} \,d\tau \right)
			^{s-1}\phi (t)^{s}\,t^{2\alpha-1}\,dt \\
			& =s\,(2\alpha)^{1-s}\,\int_{r}^{\infty }\left( t^{2\alpha}-r^{2\alpha}\right) ^{s-1}\phi
			(t)^{s}\,t^{2\alpha-1}\,dt \\
			& =s\,(2\alpha)^{1-s}\,(I_{1}+I_{2}),
		\end{align*}%
		where
		\begin{equation*}
			I_{1}=\int_{2r}^{\infty }\left( t^{2\alpha}-r^{2\alpha}\right) ^{s-1}\phi
			(t)^{s}\,t^{2\alpha-1}\,dt,\quad I_{2}=\int_{r}^{2r}\left( t^{2\alpha}-r^{2\alpha}\right) ^{s-1}\phi
			(t)^{s}t^{2\alpha-1}\,dt.
		\end{equation*}%
		Clearly, for $t>2r$,
		\begin{equation*}
			\left( t^{2\alpha}-r^{2\alpha}\right) ^{s-1}\leq \left(1- 2^{-2\alpha}\right)
			^{s-1}\,t^{2\alpha(s-1)},
		\end{equation*}%
		whence
		\begin{equation*}
			I_{1}\leq \left(1- 2^{-2\alpha}\right)
			^{s-1}\,\int_{2r}^{\infty }\phi
			(t)^{s}\,t^{2\alpha s-1}\,dt.
		\end{equation*}%
		On the other hand, the change $\xi =t^{2\alpha}-r^{2\alpha}$ yields
		\begin{equation*}
			\int_{r}^{2r}\left( t^{2\alpha}-r^{2\alpha}\right) ^{s-1}t^{2\alpha-1}\,dt=\frac{1}{2\alpha}%
			\int_{0}^{(2^{2\alpha}-1)r^{2\alpha}}\xi ^{s-1}d\xi =\frac{(2^{2\alpha}-1)^s}{2\alpha s}r^{2\alpha s},
		\end{equation*}%
		whence
		\begin{equation*}
			I_{2}\leq \phi (r)^{s}\int_{r}^{2r}\left( t^{2\alpha}-r^{2\alpha}\right) ^{s-1}t^{2\alpha-1}\,dt=%
			\frac{(2^{2\alpha}-1)^s}{2\alpha s}\phi (r)^{s}r^{2\alpha s}.
		\end{equation*}%
		Combining the estimates of $I_{1}$ and $I_{2}$, we deduce (\ref{est-decr}).
	\end{proof}
To finally prove Theorem \ref{thm-main-mu}, by Proposition \ref{prop-imply}, 
we only need to show that the condition
		\begin{equation}
				\int_{r_{0}}^{+\infty }\left[ \int_{r}^{+\infty }\frac{t^{2\alpha-1}dt}{\mu (B(o,t))}%
				\right] ^{q-1}r^{2\alpha-1}dr<\infty ,  \label{cond-int1ab}
			\end{equation}%
		is equivalent to the simpler condition
		\begin{equation}
				\int_{r_{0}}^{+\infty }\frac{r^{2\alpha q-1}dr}{[\mu (B(o,r))]^{q-1}}<\infty .
				\label{cond-int1bb}
			\end{equation}
		  We omit the details and refer to \cite[Section 6]{GSV}.

Theorem \ref{thm-Henon} can be proved by arguments similar to those for Proposition \ref{prop-imply}. In the setting of Theorem \ref{thm-Henon}, $\sigma =\lvert \cdot\rvert^{\gamma} \mu$ with $\mu$ being the Lebesgue measure on $\mathbb{R}^n$. We need show that conditions (\ref{cond-int1}) and (\ref{cond-int2}) hold if and only if $q>\frac{n+\gamma}{n-2\alpha}$. 

It is clear that $\sigma(B(o,r))\asymp r^{n+\gamma}$, and hence condition (\ref{cond-int1}) amounts to
\[	\int_{r_{0}}^{+\infty }\left[ \int_{r}^{+\infty }t^{2\alpha-1-n} d  t%
\right] ^{q-1}r^{\gamma+2\alpha-1} d  r<\infty.\]
It can simplified as 
\[\int_{r_{0}}^{+\infty } r^{(2\alpha-n)(q-1)+\gamma+2\alpha-1}d r<\infty,\]
which is equivalent to 
\[q>\frac{n+\gamma}{n-2\alpha}.\]
\begin{proof}[Proof of Theorem \ref{thm-Henon}]
By the above preliminary discussions,  condition (\ref{cond-int1}) holds if and only if $q> \frac{n+\gamma}{n-2\alpha}$.
We only need show the following claim:

\textbf{Claim}: condition (\ref{cond-int2}) holds if and only if $q\ge \frac{n+\gamma}{n-2\alpha}$.

To estimate
\[
\sup_{x\in M,r>r_0}\left[ \int_{0}^{+\infty}\frac{\sigma (B(x,s)\cap B(o, r))}{\mu (B(x,s))}%
\,s^{2\alpha-1} d  s\right] \,\left[ \int_{r}^{+\infty }\frac{t^{2\alpha-1} d  t}{\mu (B(o,t))}\right]
^{q-1},\]
we consider two cases:
\begin{enumerate}
	\item $d(x, o)\le 2r$;
	\item $d(x, o)> 2r$.
\end{enumerate}

First suppose $d(x,o)\le 2r$.  Note that $B(x, s)\supseteq B(o,r)$ if $s> 3r$. We have
\begin{align*}
&\int_{0}^{+\infty}\frac{\sigma (B(x,s)\cap B(o, r))}{\mu (B(x,s))}%
\,s^{2\alpha-1} d  s\\
=&\int_{0}^{3r}\frac{\sigma (B(x,s)\cap B(o, r))}{\mu (B(x,s))}%
\,s^{2\alpha-1} d  s+\int_{3r}^{+\infty}\frac{\sigma (B(x,s)\cap B(o, r))}{\mu (B(x,s))}%
\,s^{2\alpha-1} d  s\\
\le &\int_{0}^{3r}\frac{\sigma (B(x,s))}{\mu (B(x,s))}%
\,s^{2\alpha-1} d  s+\int_{3r}^{+\infty}\frac{\sigma (B(o, r))}{\mu (B(x,s))}%
\,s^{2\alpha-1} d  s \\
 \lesssim &\int_{0}^{3r}\frac{s^{n+\gamma}}{s^n}%
 \,s^{2\alpha-1} d  s+\int_{3r}^{+\infty}\frac{r^{n+\gamma}}{s^n}%
 \,s^{2\alpha-1} d  s\\
 \lesssim & r^{\gamma+ 2\alpha}+r^{n+\gamma}\cdot r^{2\alpha-n}\\
 \asymp & r^{\gamma+ 2\alpha}, 
\end{align*}
where the constants involved in the steps with ``$\lesssim$" and ``$\asymp$" only depend on $n$. Also note that $\gamma>-2\alpha$ is assumed.

On the other hand, we have that
\begin{align*}
	&\int_{0}^{+\infty}\frac{\sigma (B(x,s)\cap B(o, r))}{\mu (B(x,s))}%
	\,s^{2\alpha-1} d  s\\
	\ge &\int_{3r}^{+\infty}\frac{\sigma (B(x,s)\cap B(o, r))}{\mu (B(x,s))}%
	\,s^{2\alpha-1} d  s\\
=&\int_{3r}^{+\infty}\frac{\sigma (B(o, r))}{\mu (B(x,s))}%
	\,s^{2\alpha-1} d  s \\
\asymp &\int_{3r}^{+\infty}\frac{r^{n+\gamma}}{s^n}%
	\,s^{2\alpha-1} d  s\\
	\asymp & r^{\gamma+ 2\alpha}.
\end{align*}
Consequently, we obtain 
\begin{align*}
&\left[ \int_{0}^{+\infty}\frac{\sigma (B(x,s)\cap B(o, r))}{\mu (B(x,s))}%
\,s^{2\alpha-1} d  s\right] \,\left[ \int_{r}^{+\infty }\frac{t^{2\alpha-1} d  t}{\mu (B(o,t))}\right]
^{q-1}\\
\asymp &  r^{(\gamma+ 2\alpha)+(2\alpha-n)(q-1)},
\end{align*}
if $ d(x,o)\le 2r$.

Now suppose $ d(x,o)> 2r$. Note that $B(x,s)\cap B(o,r)=\emptyset$ if $s<d(x,o)-r$. 

We have
	\begin{align*}
	&\int_{0}^{+\infty}\frac{\sigma (B(x,s)\cap B(o, r))}{\mu (B(x,s))}  s^{2\alpha-1} d  s\\
	\le &  \int_{d(x,o)-r}^{+\infty}\frac{\sigma ( B(o, r))}{\mu (B(x,s))}  s^{2\alpha-1} d  s\\
\le &  \int_{\frac{1}{2} d(x,o)}^{+\infty}\frac{\sigma ( B(o, r))}{\mu (B(o,s))}  s^{2\alpha-1} d  s\\ \lesssim
	&   r^{n+\gamma}\cdot d(x,o)^{2\alpha-n}.
\end{align*}
For the other direction, we observe that $B(x, s)\supseteq B(o,r)$ if $s> \frac{3}{2}d(x,o)$.
It follows that
\begin{align*}
	&\int_{0}^{+\infty}\frac{\sigma (B(x,s)\cap B(o, r))}{\mu (B(x,s))}%
	\,s^{2\alpha-1} d  s\\
	\ge &\int_{\frac{3}{2}d(x,o)}^{+\infty}\frac{\sigma (B(x,s)\cap B(o, r))}{\mu (B(x,s))}%
	\,s^{2\alpha-1} d  s\\
	=&\int_{\frac{3}{2}d(x,o)}^{+\infty}\frac{\sigma (B(o, r))}{\mu (B(x,s))}%
	\,s^{2\alpha-1} d  s \\
	\asymp &  r^{n+\gamma}\cdot d(x,o)^{2\alpha-n}.
\end{align*}
Hence we have
\begin{align*}
	&\left[ \int_{0}^{+\infty}\frac{\sigma (B(x,s)\cap B(o, r))}{\mu (B(x,s))}%
	\,s^{2\alpha-1} d  s\right] \,\left[ \int_{r}^{+\infty }\frac{t^{2\alpha-1} d  t}{\mu (B(o,t))}\right]
	^{q-1}\\
	\asymp &    r^{(n+\gamma)+(2\alpha-n)(q-1)}\cdot d(x,o)^{2\alpha-n},
\end{align*}
if $ d(x,o)>2r$.

Combing the two cases together, we obtain that 
\[\sup_{x\in M,r>r_0}\left[ \int_{0}^{+\infty}\frac{\sigma (B(x,s)\cap B(o, r))}{\mu (B(x,s))}%
\,s^{2\alpha-1} d  s\right] \,\left[ \int_{r}^{+\infty }\frac{t^{2\alpha-1} d  t}{\mu (B(o,t))}\right]
^{q-1}<+\infty \] if and only if
\[(\gamma+ 2\alpha)+(2\alpha-n)(q-1) \le 0.\]
\end{proof}

	\section{Weak solutions for the fractional differential inequality}\label{sect-solutions}
So far we focused on the existence of solutions to the integral type inequality \eqref{eq-non-local-integral-inequality}. In this section, we prove Theorem \ref{thm-equivalence}, which gives a weak type solution for the differential type inequality \eqref{eq-non-local-differential-inequality}. For the sake of completeness, we summarize briefly some necessary facts from Dirichlet form theory in Appendix \ref{appendix-DF}.

\begin{proof}[Proof of Theorem \ref{thm-equivalence}]
	``if'' part: Suppose that $v$ is a   positive solution to (\ref{eq-non-local-differential-inequality}) in $\mathcal{F}^{(\alpha)}_e$-sense. By definition, $v\in\mathcal{F}^{(\alpha)}_e$, and
	\begin{equation}
		\mathcal{E}^{(\alpha)}(v, \varphi)\ge \int_{M} v^q \varphi d \sigma, \label{blacksquare}
	\end{equation}
	for each $\varphi\in \mathcal{F}^{(\alpha)}\cap C_c(M)$, with $\varphi\ge 0$.
	In particular, we see that $\mathcal{E}^{(\alpha)}(v, \varphi)\ge 0$ for each $\varphi\in \mathcal{F}^{(\alpha)}\cap C_c(M)$, with $\varphi\ge 0$. By  \cite[Lemma 2.2.10]{FOT}, there is  a Radon measure  $\nu$ of finite ($0$-order) energy integral (i.e. for some $C>0$, $\forall f\in \mathcal{F}^{(\alpha)}\cap C_c(M)$, $\int_{M}|f|
		d \nu \le \sqrt{\mathcal{E}^{(\alpha)}(f, f)}$), such that
		\[\mathcal{E}^{(\alpha)}(v, \varphi)=\int_M \tilde{\varphi} d \nu, \]for each $\varphi\in \mathcal{F}^{(\alpha)}_e$(here $\tilde{\varphi}$ is a quasi-continuous version of $\varphi$); see \cite[Theorem 2.1.7]{FOT}.
	Note that $\mathcal{F}^{(\alpha)}\cap C_c(M)$ is a special standard core (\cite[Section 1.1, Section 1.4]{FOT}). In particular, for each compact set $K$ and relatively
		compact open set $G$ with $K\subset G$, there is some $\varphi\in \mathcal{F}^{(\alpha)}\cap C_c(M)$, such that $0\le \varphi\le 1$, $\varphi =1$ on $K$, and $\varphi=0$ on $M\setminus G$. Then the inequality \[+\infty>\mathcal{E}^{(\alpha)}(v, \varphi)=\int_{M} \varphi d \nu \ge \int_{M}\varphi  v^q  d \sigma\ge\int_{K} v^q  d \sigma\] implies that $v^q\sigma$ is a Radon measure. And by \cite[Lemma 1.4.2(ii)]{FOT}, we have 
		\[\int_{M} \varphi d \nu \ge \int_{M} \varphi v^q  d \sigma\]
		for all $\varphi\in C_c(M)$ with $\varphi\ge 0$, and hence
		$\nu \ge v^q\sigma$  in the sense of measures. 
	
	Let $g$ be a reference function for $(\mathcal{E}^{(\alpha)}, \mathcal{F}^{(\alpha)})$, that is, $g$ is a strictly positive, bounded, $\mu$-integrable function  such that $\int_M g G^{(\alpha)} g d\mu \le 1$ (see \cite[ Section 1.5]{FOT} for more details). 
	Let $h$ be a bounded 
	measurable function and $f=hg$. 	By   \cite[Theorem 1.5.4 and Theorem 1.5.5]{FOT}, we have $G^{(\alpha)}f\in \mathcal{F}^{(\alpha)}_e$, and
		\begin{align*}
			\int_M v h g d \mu=&	\int_M v f d \mu \\= &\mathcal{E}^{(\alpha)}(v, G^{(\alpha)}f)\\
			=&\int_M G^{(\alpha)}f d \nu\\
			=&\int_M f G^{(\alpha)}_{\nu} 1 d\mu\\=&\int_M h (G^{(\alpha)}_{\nu} 1) gd\mu,
		\end{align*}
		where for the second last equality we applied the symmetry of the Green kernel.  
		
		We see that $v(x)=G^{(\alpha)}_{\nu} 1(x)$ for $g\mu$-a.e. $x\in M$ and hence for $\mu$-a.e. $x$. 
		Recalling that $\nu \ge v^q\sigma$,		we have
		\[G^{(\alpha)}_{\nu} 1(x)\ge G^{(\alpha)}_{\sigma}(v^q)(x),\] for each $x\in M$. 
		It follows that	$v\ge G^{(\alpha)}_{\sigma}(v^q)$ holds $\mu$ (and hence $\sigma$) almost everywhere. It is also clear that $G^{(\alpha)}_{\nu} 1$ is lower semi-continuous by Fatou's lemma. 
		Hence $v$ is a positive solution to  (\ref{eq-non-local-integral-inequality}) according to Definition \ref{defi-int-ineq}.
	
	
	
	
	``only if'' part: Now suppose that there exists a positive solution to (\ref{eq-non-local-integral-inequality}).
	
	Under the quasi-metric condition for $G^{(\alpha)}$, this is equivalent to
	\begin{equation}
		m \ge C G^{(\alpha)}_{\sigma}(m^q), \label{blacklozenge}
	\end{equation}
	for some constant $C>0$ by Theorem \ref{thm-equiv-minimal}.
	
	Rescaling in a similar way as  in the proof of ``only if'' part of Theorem \ref{thm-equiv-minimal} (the equation (\ref{blacktriangle})), we see that for some $c>0$, and a non-negative, non-zero function $\eta \in C_c(M)$ with $\eta \le c' m^q$,
	\[cm\ge G^{(\alpha)}_{\sigma} ((cm)^q)+ G^{(\alpha)}_{\sigma}(\eta).\]
	Here the existence of $\eta$ follows from the lower semi-continuity of $m$.
	
	By assumption $\theta=\frac{d\sigma}{d\mu}\in C(M)$. Note that $m\in L^q(M, \sigma)$ by Theorem \ref{thm-equiv-integrability}. We have $m^q \theta \in L^1(M, \mu)$. By the inequality (\ref{blacklozenge}), it follows that
	
		\begin{align*}
			&\int_M G^{(\alpha)}(m^q \theta) (m^q \theta) d\mu
			\\=&\int_M m^q \theta G^{(\alpha)}_{\sigma}(m^q) d\mu
			\\
			\le &\int_M Cm \cdot  m^q d\sigma\\
			\le &\int_M C' m^q d\sigma<+\infty.
	\end{align*}
	A non-negative measurable function $f$ is said to be of finite energy, if  $\int_M f G^{(\alpha)} f d\mu <+\infty$.
		The above estimates show that $m^q\theta$ is a function of finite energy.
	Consider the following iteration
	\begin{align*}
		\begin{cases}
			v_0= 0;\\
			v_{k+1}=G^{(\alpha)} (v_{k}^q\theta)+G^{(\alpha)} (\eta\theta).
		\end{cases}
	\end{align*}
	And let $v=\sup_{k\ge 0} v_k$. It is clear by induction that $\{v_k\}_{k\ge 0}$ increases to $v$, $v\le cm$, and
	\[v=G^{(\alpha)} (v^q\theta)+G^{(\alpha)} (\eta\theta)\]
	holds. Note that $v^q \theta$ and  $\eta \theta$, both bounded by $cm^q \theta$, are functions of finite energy. Then by  \cite[ Theorem 1.5.4]{FOT}, $v\in \mathcal{F}^{(\alpha)}_e$, and
	\[\mathcal{E}^{(\alpha)}(v, \varphi)=\int_M v^q \theta \varphi d\mu+\int_M \eta \theta \varphi d\mu\ge \int_M v^q \varphi d\sigma,\]
	for each $\varphi\in \mathcal{F}^{(\alpha)}\cap C_c(M)$, with $\varphi\ge 0$.

\end{proof}

	\section{Strong solutions for the fractional differential inequality}\label{sect-strong-solutions}

	Based on Theorem \ref{thm-equivalence}, we can lift the regularity of solutions further, in a similar way as \cite{GSV}. The argument is a bit involved due to our lack of knowledge for the regularity of fractional Laplacian on manifolds. 
	
	
	Let $M$ be a complete connected non-compact Riemannian manifold. 
	Slightly abusing the notations, we denote this semigroup on $C_{\infty}(M)$ by $\{P_t\}_{t\ge 0}$ as well, since they are both determined by the heat kernel  $\{p_{t}(x,y)\}_{t> 0}$.  Fix some $\alpha\in (0,1)$. The subordination construction applied to  $\{P_t\}_{t\ge 0}$ on $C_{\infty}(M)$ leads to a strongly continuous semigroup in the same way as the $L^2(M, \mu)$ setting. The subordinated semigroup on $C_{\infty}(M)$ is denoted by $\{P_t^{(\alpha)}\}_{t\ge 0}$, with generator $	-(-\Delta)^{\alpha}$ (on $C_{\infty}(M)$).  
	
	Assume that the fractional Laplacian $	(-\Delta)^{\alpha}$ (on $L^2(M, \mu)$) is transient. The corresponding fractional Green operator $G^{(\alpha)}$ on $C_{\infty}(M)$ can be understood in the following way:
	\[G^{(\alpha)} f=\lim_{t\rightarrow +\infty}\int_{0}^{t}P_s^{(\alpha)} f ds,\]
	for $f\in C_{\infty}(M)$ such that the limit exists pointwise. 
	Note that $G^{(\alpha)}$ is given by the integral kernel $G^{(\alpha)}(\cdot, \cdot)$ as introduced in Definition \ref{defi-Green-alpha}. The following fact plays the role of fundamental theorem of calculus in semigroup theory.
	\begin{lemma}\cite[Lemma 1.3]{semigroup} \label{lem-generator}\rm
 Let $(A, \mathcal{D}(A))$ be the generator of a strongly continuous semigroup $\{T_t\}_{t\ge 0}$ on a Banach space $Y$. Then the following properties hold.
\begin{enumerate}
\item If $f\in \mathcal{D}(A)$, then $T_t f\in \mathcal{D}(A)$ and \[\frac{d}{dt}T_t f =T_t A f=AT_t f\] for all $t\ge 0$.
\item For each $t\ge 0$ and $f\in Y$, \[\int_{0}^{t} T_s f ds \in \mathcal{D}(A).\]
\item For each $t\ge 0$ and $f\in Y$,
\[T_t f - f=A\int_{0}^{t} T_s f ds.\]
\item For each $t\ge 0$ and $f\in \mathcal{D}(A)$,
\[T_t f - f=\int_{0}^{t} T_s A f ds.\]
\end{enumerate}
	\end{lemma}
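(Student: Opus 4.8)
The plan is to establish the four assertions in the order (2)--(3), then (1), then (4), using only the two structural facts available for an arbitrary strongly continuous semigroup. First, strong continuity makes $s\mapsto T_sf$ continuous, hence Riemann integrable (in the Banach-space sense) on every compact interval, with $\norm{\int_0^t g(s)\,ds}\le\int_0^t\norm{g(s)}\,ds$ for continuous $Y$-valued $g$, and a bounded operator may be pulled inside such an integral. Second, there are constants $M\ge 1$ and $\omega\in\R$ with $\norm{T_t}\le Me^{\omega t}$ for all $t\ge 0$: the family $\{T_t:0\le t\le 1\}$ is pointwise bounded by strong continuity on $[0,1]$, hence uniformly bounded by the Banach--Steinhaus theorem, and $T_{n+r}=T_1^{\,n}T_r$ bootstraps this to all $t\ge 0$. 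I would also invoke the elementary fact that $\frac1h\int_a^{a+h}g(s)\,ds\to g(a)$ as $h\downarrow 0$ for continuous $g$. To prove (2) and (3) simultaneously, fix $f\in Y$ and $t\ge 0$ and compute for $h>0$, using the semigroup law and a change of variables,
\[
\frac{T_h-I}{h}\int_0^t T_sf\,ds=\frac1h\Bigl(\int_0^tT_{s+h}f\,ds-\int_0^tT_sf\,ds\Bigr)=\frac1h\int_t^{t+h}T_sf\,ds-\frac1h\int_0^hT_sf\,ds.
\]
As $h\downarrow 0$ the right-hand side converges to $T_tf-T_0f=T_tf-f$; hence $\int_0^t T_sf\,ds\in\mathcal{D}(A)$, which is (2), and $A\int_0^t T_sf\,ds=T_tf-f$, which is (3).

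For (1), let $f\in\mathcal{D}(A)$. From $T_hT_t=T_tT_h$ I get $\frac{T_h-I}{h}T_tf=T_t\bigl(\frac{T_h-I}{h}f\bigr)$, and letting $h\downarrow 0$ the right side tends to $T_tAf$ by boundedness and strong continuity of $T_t$. This shows at once that $T_tf\in\mathcal{D}(A)$, that $AT_tf=T_tAf$, and that the right derivative of $s\mapsto T_sf$ at $s=t$ equals $T_tAf$. For the left derivative at $t>0$, write for $0<h<t$
\[
\frac{T_tf-T_{t-h}f}{h}-T_tAf=T_{t-h}\Bigl(\frac{T_hf-f}{h}-Af\Bigr)+\bigl(T_{t-h}-T_t\bigr)Af.
\]
The second term tends to $0$ by strong continuity; for the first, $\norm{T_{t-h}}\le Me^{|\omega| t}$ is bounded uniformly in $h\in(0,t)$ while $\frac{T_hf-f}{h}-Af\to 0$. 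Hence left and right derivatives coincide and $\frac{d}{dt}T_tf=T_tAf=AT_tf$ for all $t\ge 0$ (one-sided at $t=0$), which is (1).

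Finally, (4) follows from (1): for $f\in\mathcal{D}(A)$ the curve $s\mapsto T_sf$ is differentiable with derivative $s\mapsto T_sAf$, and the latter is continuous in $s$ by strong continuity of the semigroup applied to the fixed vector $Af$; the fundamental theorem of calculus for continuously differentiable Banach-space-valued curves then yields $T_tf-f=\int_0^t T_sAf\,ds$. The only genuinely non-elementary ingredient in the whole argument is the exponential bound $\norm{T_t}\le Me^{\omega t}$, i.e.\ the Banach--Steinhaus step, which is what makes the limit in the left-derivative computation uniform; everything else is bookkeeping, with the one point requiring care being the distinction between one-sided and two-sided derivatives at the endpoint $t=0$.
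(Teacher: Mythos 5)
Your proof is correct, and it is the standard argument: the paper itself gives no proof of this lemma but simply cites Engel--Nagel (Lemma 1.3 of \cite{semigroup}), whose proof proceeds exactly as you do — the difference-quotient/change-of-variables computation for (2)--(3), the commutation $T_hT_t=T_tT_h$ plus the split into right and left difference quotients (using the local uniform bound $\norm{T_t}\le Me^{\omega t}$ from Banach--Steinhaus) for (1), and the vector-valued fundamental theorem of calculus for (4). The endpoint caveat about one-sided derivatives at $t=0$ is handled appropriately, so there is nothing to add.
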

	
	Now we assume that $G^{(\alpha)}$ is quasi-metric and there is a positive solution to (\ref{eq-non-local-integral-inequality-mu}). By the proof of Theorem \ref{thm-equivalence} above, there is a non-negative function $v\in \mathcal{F}^{(\alpha)}_e$, such that $v\le cm$, and
\begin{align}\label{eq-v-solution}
v=G^{(\alpha)} (v^q)+G^{(\alpha)} (\eta)
\end{align}
	holds. Here $\eta \in C_c(M)$  is a non-negative, non-zero function with $\eta \le c' m^q$. The following lemma gives a way to regularize each term.
	
	\begin{lemma}\label{lem-regular} \rm
		Let $w\in C_{\infty}(M)$, and let $f$ be a non-negative measurable function such that $f\le w$ $\mu$-a.e.. Then \[\int_{1}^{2} P_t f  dt\in \mathcal{D}(\Delta)\cap C^{\infty}(M).\]
		Here $\mathcal{D}(\Delta)$ is the domain of the generator $\Delta$ on $C_{\infty}(M)$.
	\end{lemma}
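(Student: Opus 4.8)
The plan is to let parabolic smoothing of the heat semigroup do the work, and then to recognize $F:=\int_1^2 P_tf\,dt$ as a ``primitive'' of $P_2f-P_1f$ with respect to $\{P_s\}_{s\ge0}$, which places $F$ in the domain of the Feller generator by the very definition of a generator.

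First I would record the elementary a priori facts. Since $0\le f\le w$ and $w\in C_\infty(M)$, $f$ is bounded, so $P_tf(x)=\int_Mp_t(x,y)f(y)\,d\mu(y)$ is well defined and $0\le P_tf\le P_tw\le\norm{w}_{\sup}$ for every $t>0$. By the standard parabolic regularity theory for the heat operator on $M$ (hypoellipticity of $\partial_t-\Delta$; see e.g. \cite{G}), the function $(t,x)\mapsto P_tf(x)$ is smooth on $(0,\infty)\times M$ and solves $\partial_t(P_tf)=\Delta(P_tf)$. In particular each $P_tf$ ($t>0$) is continuous; being nonnegative and dominated by $P_tw\in C_\infty(M)$, whose superlevel sets are compact, it lies in $C_\infty(M)$. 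The same reasoning applied to the norm-continuous $C_\infty(M)$-valued map $t\mapsto P_tw$ shows $\int_1^2P_tw\,dt\in C_\infty(M)$.

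Next I would establish the regularity of $F$ and compute its Laplacian. Using the joint smoothness of $(t,x)\mapsto P_tf(x)$ on the \emph{compact} parameter interval $[1,2]$, I would differentiate under the integral sign to get $F\in C^\infty(M)$ together with
\[
\Delta F=\int_1^2\Delta(P_tf)\,dt=\int_1^2\partial_t(P_tf)\,dt=P_2f-P_1f,
\]
which lies in $C_\infty(M)$ by the previous paragraph. Moreover $F$ is continuous, nonnegative, and bounded above by $\int_1^2P_tw\,dt\in C_\infty(M)$, hence $F\in C_\infty(M)$. It then remains to see that $F\in\mathcal D(\Delta)$. Writing $g:=P_2f-P_1f\in C_\infty(M)$, a change of variables combined with Fubini (all the integrands $P_tf$ being nonnegative and bounded) gives, for every $h>0$,
\[
P_hF-F=\int_{1+h}^{2+h}P_tf\,dt-\int_1^2P_tf\,dt=\int_2^{2+h}P_tf\,dt-\int_1^{1+h}P_tf\,dt=\int_0^hP_sg\,ds.
\]
Since $g\in C_\infty(M)$, strong continuity of $\{P_s\}_{s\ge0}$ on $C_\infty(M)$ yields $h^{-1}\int_0^hP_sg\,ds\to g$ in $\norm{\cdot}_{\sup}$ as $h\to0^+$, so $h^{-1}(P_hF-F)\to g$ in $C_\infty(M)$; by the definition of the generator, $F\in\mathcal D(\Delta)$ (with $\Delta F=g$, consistently with the display above). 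Together with $F\in C^\infty(M)$ this is the claim.

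The one genuinely non-elementary ingredient — and the step I expect to be the main obstacle to write out carefully — is the smoothing property used above: that $P_tf$ is smooth for a merely bounded measurable $f$ and solves the heat equation $\partial_t(P_tf)=\Delta(P_tf)$. I would quote this from the standard regularity theory for $\Delta$ rather than reprove it. Everything else reduces to the semigroup identity $P_hP_t=P_{h+t}$, routine interchanges of integration justified by boundedness, and the characterization of $\mathcal D(\Delta)$ via difference quotients.
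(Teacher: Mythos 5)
Your proof is correct, but it is organized differently from the paper's. The paper obtains smoothness by writing $\int_{1}^{2}P_tf\,dt=P_{1/2}\bigl(\int_{1/2}^{3/2}P_tf\,dt\bigr)$ and invoking the smoothing property of $P_{1/2}$ on $L^{\infty}$ (citing \cite[Theorem 7.15]{G06}), and it obtains membership in $\mathcal{D}(\Delta)$ by noting, via the Feller property, that $P_{1/2}f\in C_{\infty}(M)$ (smooth, nonnegative, dominated by $P_{1/2}w$) and then applying the abstract semigroup fact Lemma \ref{lem-generator}(2) to $\int_{1/2}^{3/2}P_t(P_{1/2}f)\,dt$. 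You instead get smoothness by differentiating under the integral sign using joint smoothness of $(t,x)\mapsto P_tf(x)$, and you get $\mathcal{D}(\Delta)$-membership by the direct difference-quotient computation $P_hF-F=\int_0^hP_s(P_2f-P_1f)\,ds$ together with strong continuity on $C_{\infty}(M)$ — which is in effect a self-contained re-proof of Lemma \ref{lem-generator}(2)--(3) in this special case, made possible because you first check $P_1f,P_2f\in C_{\infty}(M)$ by the same domination-by-$P_tw$ argument the paper uses for $P_{1/2}f$. Both routes lean on the same two external inputs (the Feller property and the smoothing of the heat semigroup on bounded measurable functions; your appeal to hypoellipticity of $\partial_t-\Delta$ for bounded $f$ is the same kind of citation as the paper's to \cite[Theorem 7.15]{G06}, and you rightly flag it). The trade-off: the paper's factorization through $P_{1/2}$ avoids any discussion of differentiation under the integral sign and delegates the generator bookkeeping to a quoted lemma, while your version is more hands-on and yields the explicit identity $\Delta\int_1^2P_tf\,dt=P_2f-P_1f$ as a by-product.
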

	\begin{proof}
		First note that
		\[\int_{1}^{2} P_t f  dt=P_{1/2}\left(\int_{1/2}^{3/2} P_t f  dt\right).\]
		Since $P_t$ is contractive with respect to $\norm{\cdot}_{\infty}$, we have $\int_{1/2}^{3/2} P_t f  dt \in L^{\infty}(M, \mu)$ and hence $\int_{1}^{2} P_t f  dt$ is smooth (see \cite[Theorem 7.15]{G06}). 
		
		From the Feller property we deduce that $P_t w\in C_{\infty}(M)$ for each $t\ge 0$. Since $0\le f\le w$, we have
		$P_{1/2} f \le P_{1/2} w$ and $P_{1/2} f$ is smooth. In particular, it follows that $P_{1/2} f\in C_{\infty}(M)$. We then have
		\[\int_{1}^{2} P_t f dt=\int_{1/2}^{3/2} P_t (P_{1/2} f )dt \in \mathcal{D}(\Delta)\]
		(cf. Lemma \ref{lem-generator} (2)).
	\end{proof}
	
	The following is a direct consequence of Definition \ref{defi-Green-alpha} and Fubini's theorem.			
	\begin{lemma} \rm
		For each non-negative measurable function $f$, and each $s>t\ge 0$,
		\[\int_{t}^{s}P_r(G^{(\alpha)}(f))dr=G^{(\alpha)}(\int_{t}^{s}P_r f dr).\]
	\end{lemma}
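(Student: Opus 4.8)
The plan is to expand both sides entirely in terms of the heat kernel $\{p_r\}_{r>0}$ (equivalently, the heat semigroup $\{P_r\}_{r\ge 0}$ generated by $\Delta$) and to reduce the identity to a single interchange of non-negative integrals. First I would record the subordination formula for the Green operator: by Definition \ref{defi-Green-alpha} and Tonelli's theorem (all integrands being non-negative), for every non-negative measurable $g$ and every $x\in M$,
\[
G^{(\alpha)}(g)(x)=\frac{1}{\Gamma(\alpha)}\int_0^\infty \tau^{\alpha-1}(P_\tau g)(x)\,d\tau .
\]

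Next I would compute the left-hand side. Applying $P_r$ to $G^{(\alpha)}(f)$, pulling $P_r$ under the $\tau$-integral by Tonelli, and using the Chapman--Kolmogorov identity $P_rP_\tau=P_{r+\tau}$ (which follows from $\int_M p_r(x,z)p_\tau(z,y)\,d\mu(z)=p_{r+\tau}(x,y)$, again a Tonelli interchange since $p_r,p_\tau\ge 0$), one gets
\[
P_r\big(G^{(\alpha)}(f)\big)(x)=\frac{1}{\Gamma(\alpha)}\int_0^\infty \tau^{\alpha-1}(P_{r+\tau}f)(x)\,d\tau ,
\]
and hence, integrating in $r$ over $[t,s]$,
\[
\int_t^s P_r\big(G^{(\alpha)}(f)\big)(x)\,dr=\frac{1}{\Gamma(\alpha)}\int_t^s\!\!\int_0^\infty \tau^{\alpha-1}(P_{r+\tau}f)(x)\,d\tau\,dr .
\]
For the right-hand side, the same subordination formula together with Tonelli (moving $P_\tau$ inside the $r$-integral, once more legitimate by Chapman--Kolmogorov and non-negativity) gives
\[
G^{(\alpha)}\!\Big(\int_t^s P_r f\,dr\Big)(x)=\frac{1}{\Gamma(\alpha)}\int_0^\infty\!\!\int_t^s \tau^{\alpha-1}(P_{r+\tau}f)(x)\,dr\,d\tau .
\]
The two expressions differ only in the order of the $r$- and $\tau$-integrations of the non-negative function $(r,\tau)\mapsto \tau^{\alpha-1}(P_{r+\tau}f)(x)$, so a final application of Tonelli's theorem yields the asserted equality.

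I do not expect any genuine obstacle: because $f\ge 0$ and the heat kernel is everywhere positive, every exchange of integrals above is an instance of Tonelli's theorem and needs no integrability hypothesis; the identity is understood to hold in $[0,+\infty]$, with both sides allowed to be $+\infty$. The only point worth stating explicitly is that $P_r$ here denotes the action of the heat semigroup on non-negative measurable functions via the kernel $p_r$, so that all the manipulations take place at the level of kernels rather than of semigroups on a Banach space.
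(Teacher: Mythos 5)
Your proposal is correct and coincides with the paper's argument: the paper states this lemma as ``a direct consequence of Definition \ref{defi-Green-alpha} and Fubini's theorem,'' and your expansion of both sides via the subordination kernel formula, Chapman--Kolmogorov, and Tonelli for non-negative integrands is precisely the computation being alluded to. No gaps; the remark that everything is done at the kernel level with values in $[0,+\infty]$ is the right way to justify the unrestricted interchanges.
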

	
	%
	
By the assumption on  asymptotic decay of $G^{(\alpha)}$, we have \[\lim_{d(x,o)\rightarrow +\infty}m(x)=0.\]
	Then we can find some $w\in C_{\infty}(M)$ such that $w\ge m$.  
	For some $C>0$, $Cw$ dominates $v, v^q, \eta, G^{(\alpha)} (v^q), G^{(\alpha)} (\eta)$. It follows from Lemma \ref{lem-regular} that 
	\[\int_{1}^{2} P_t (v)  dt,~~ \int_{1}^{2} P_t (v^q)  dt, ~~\int_{1}^{2} P_t ( \eta)  dt \]
	and 	\[ G^{(\alpha)}\left(\int_{1}^{2} P_t  (v^q) dt\right)=\int_{1}^{2} P_t (G^{(\alpha)} (v^q))  dt, ~~G^{(\alpha)} \left(\int_{1}^{2} P_t ( \eta)  dt\right)=\int_{1}^{2} P_t ( G^{(\alpha)} (\eta))  dt \]
	are all in $\mathcal{D}(\Delta)\cap C^{\infty}(M)$.
	
	By \cite[Theorem 32.1]{Sato}, $\mathcal{D}(\Delta)\subseteq \mathcal{D}((-\Delta)^{\alpha})$, both of which are understood as domains of generators on $C_{\infty}(M)$.
	
	\begin{lemma} \rm
		Let $0\le f\in \mathcal{D}((-\Delta)^{\alpha})$, understood on $C_{\infty}(M)$. Suppose that  $G^{(\alpha)} f\in \mathcal{D}((-\Delta)^{\alpha})$. Then \[(-\Delta)^{\alpha}\left(G^{(\alpha)} f \right)=f.\] 
	\end{lemma}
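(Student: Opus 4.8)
The identity $(-\Delta)^\alpha(G^{(\alpha)} f) = f$ is the statement that the Green operator inverts the generator. The plan is to extract it from the semigroup formalism via Lemma \ref{lem-generator}, applied to the strongly continuous semigroup $\{P_t^{(\alpha)}\}_{t\ge 0}$ on the Banach space $C_\infty(M)$, whose generator is $-(-\Delta)^\alpha$. First I would recall the definition $G^{(\alpha)} f = \lim_{t\to+\infty}\int_0^t P_s^{(\alpha)} f\, ds$ (pointwise), and use transience together with $0\le f\in C_\infty(M)$ to see that $\int_0^t P_s^{(\alpha)} f\,ds$ increases to $G^{(\alpha)} f$ as $t\to+\infty$; here one needs $G^{(\alpha)} f\in C_\infty(M)$, which follows from $G^{(\alpha)} f\in\mathcal D((-\Delta)^\alpha)$ being assumed.

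Next I would apply Lemma \ref{lem-generator}(3) with $T_t = P_t^{(\alpha)}$, $A = -(-\Delta)^\alpha$, and the fixed input $f$: for every $t\ge 0$,
\[
P_t^{(\alpha)} f - f = -(-\Delta)^\alpha \int_0^t P_s^{(\alpha)} f\, ds.
\]
Now I let $t\to+\infty$. On the left, $P_t^{(\alpha)} f\to 0$ (this is exactly the transience/decay of the semigroup at a fixed $f\in C_\infty(M)$, and should be justified from $\int_0^\infty P_s^{(\alpha)} f\,ds<\infty$ forcing $\liminf P_t^{(\alpha)} f=0$, combined with monotonicity of $t\mapsto P_t^{(\alpha)}$ applied to $f\ge0$ is not available, so instead one uses $P_t^{(\alpha)} f = P_{t-1}^{(\alpha)}(P_1^{(\alpha)} f)$ and that $P_1^{(\alpha)} f$ is dominated by an element of $C_\infty$, giving $\|P_t^{(\alpha)} f\|_\infty\to 0$). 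So the left side tends to $-f$. On the right, $\int_0^t P_s^{(\alpha)} f\,ds \to G^{(\alpha)} f$ in $C_\infty(M)$ (monotone pointwise convergence of nonnegative functions to a function in $C_\infty(M)$, hence, by Dini-type reasoning or by uniform decay at infinity, convergence in $\|\cdot\|_\infty$), and since $(-\Delta)^\alpha$ is a closed operator on $C_\infty(M)$ and $G^{(\alpha)} f$ lies in its domain by hypothesis, we may pass the operator through the limit: $(-\Delta)^\alpha\int_0^t P_s^{(\alpha)} f\,ds \to (-\Delta)^\alpha(G^{(\alpha)} f)$. Equating the two limits gives $-f = -(-\Delta)^\alpha(G^{(\alpha)} f)$, which is the claim.

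The main obstacle is the two convergence statements needed to take $t\to+\infty$, specifically: (i) showing $\|P_t^{(\alpha)} f\|_\infty\to 0$, and (ii) showing that $\int_0^t P_s^{(\alpha)} f\,ds$ converges to $G^{(\alpha)} f$ in the norm of $C_\infty(M)$ rather than merely pointwise — pointwise convergence alone is not enough to combine with closedness of $(-\Delta)^\alpha$ in the Banach space $C_\infty(M)$. For (i) I would write $P_t^{(\alpha)} f = P_{t-1}^{(\alpha)}(P_1^{(\alpha)} f)$, note $g:=P_1^{(\alpha)} f\in C_\infty(M)$ and $0\le g$, and use that $\int_1^\infty P_s^{(\alpha)} g\,ds\le \int_0^\infty P_s^{(\alpha)} f\,ds = G^{(\alpha)} f<\infty$ pointwise, so along a sequence $P_{t_n}^{(\alpha)} g\to 0$ pointwise; then equicontinuity/decay-at-infinity of the family (Feller property of the subordinated semigroup) upgrades this to uniform convergence, and strong continuity plus the semigroup property give convergence along the full net. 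For (ii), the increments $\int_t^{t'} P_s^{(\alpha)} f\,ds$ are nonnegative elements of $C_\infty(M)$ whose pointwise sup over $t,t'\ge T$ tends to $0$ as $T\to\infty$ (since $G^{(\alpha)} f$ is finite and in $C_\infty(M)$), and a uniform tail estimate — again using that the relevant functions are dominated by a single element of $C_\infty(M)$ and decay at infinity — gives a Cauchy criterion in $\|\cdot\|_\infty$. Once both limits are secured, closedness of the generator closes the argument.
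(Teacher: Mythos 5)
Your step (ii) is sound: $g_t=\int_0^t P_s^{(\alpha)}f\,ds$ lies in $C_{\infty}(M)$, increases pointwise (since $f\ge 0$) to $G^{(\alpha)}f$, which is in $C_{\infty}(M)$ because it is assumed to lie in $\mathcal{D}((-\Delta)^{\alpha})$, so Dini's theorem on the one-point compactification does give $\norm{g_t-G^{(\alpha)}f}_{\infty}\to 0$. The genuine gap is step (i), $\norm{P_t^{(\alpha)}f}_{\infty}\to 0$, which your closedness argument cannot do without (you need norm convergence of $(-\Delta)^{\alpha}g_t=f-P_t^{(\alpha)}f$ as well as of $g_t$). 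Pointwise finiteness of $\int_0^{\infty}P_s^{(\alpha)}f(x)\,ds$ only yields $\liminf_{t\to\infty}P_t^{(\alpha)}f(x)=0$ along an $x$-dependent sequence; $t\mapsto P_t^{(\alpha)}f(x)$ is not monotone, and upgrading to convergence along the full net, let alone in sup-norm, requires quantitative information you do not have. In particular, the Feller property you invoke only says that each $P_t^{(\alpha)}$ maps $C_{\infty}(M)$ into itself and that the semigroup is strongly continuous; it gives neither equicontinuity of the orbit $\{P_t^{(\alpha)}f\}_{t\ge 1}$ nor decay at infinity that is uniform in $t$. (On $\mathbb{R}^n$ such uniform decay follows from explicit bounds on the stable kernel, but no heat-kernel estimates are assumed in this lemma, which is stated under transience, the quasi-metric property, Feller, and decay of $G^{(\alpha)}(\cdot,o)$ only.) As written, the limit passage at $t\to+\infty$ is therefore not justified, and the proof does not close.

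The paper avoids large-time analysis entirely by using the hypothesis $G^{(\alpha)}f\in\mathcal{D}((-\Delta)^{\alpha})$ head-on: since $G^{(\alpha)}f$ is in the domain, $(-\Delta)^{\alpha}\bigl(G^{(\alpha)}f\bigr)=\lim_{t\to 0^+}t^{-1}\bigl(G^{(\alpha)}f-P_t^{(\alpha)}G^{(\alpha)}f\bigr)$, and by Tonelli (here $f\ge 0$ is used) $P_t^{(\alpha)}\bigl(G^{(\alpha)}f\bigr)=\int_t^{\infty}P_s^{(\alpha)}f\,ds$, so the difference quotient equals $t^{-1}\int_0^t P_s^{(\alpha)}f\,ds$, which tends to $f$ by strong continuity; no behaviour of $P_t^{(\alpha)}f$ as $t\to\infty$ is needed. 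If you prefer to keep your "approximate $G^{(\alpha)}$ and use closedness" scheme, replace the limit $t\to+\infty$ by the resolvent limit $\lambda\downarrow 0$: with $R_{\lambda}f=\int_0^{\infty}e^{-\lambda s}P_s^{(\alpha)}f\,ds\in\mathcal{D}((-\Delta)^{\alpha})$ one has $(-\Delta)^{\alpha}R_{\lambda}f=f-\lambda R_{\lambda}f$, while $R_{\lambda}f\uparrow G^{(\alpha)}f$ pointwise, hence uniformly by the same Dini argument, and $\norm{\lambda R_{\lambda}f}_{\infty}\le\lambda\norm{G^{(\alpha)}f}_{\infty}\to 0$ because $G^{(\alpha)}f$ is bounded; closedness of $(-\Delta)^{\alpha}$ then gives the claim without ever needing $\norm{P_t^{(\alpha)}f}_{\infty}\to 0$.
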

	\begin{proof}
		Since $G^{(\alpha)} f\in \mathcal{D}((-\Delta)^{\alpha})$, we can compute \begin{align*}
			(-\Delta)^{\alpha}\left(G^{(\alpha)} f \right)&=\lim_{t \rightarrow 0} \frac{G^{(\alpha)} f- P_t^{(\alpha)}(G^{(\alpha)} f)}{t}\\&=\lim_{t \rightarrow 0} \frac{ \int_{0}^{\infty}P_s^{(\alpha)}f ds - \int_{0}^{\infty}P_{t+s}^{(\alpha)}f ds}{t}\\&=\lim_{t \rightarrow 0} \frac{ \int_{0}^{t}P_s^{(\alpha)}f ds}{t}\\&=f,
		\end{align*}
		where each limit is taken pointwise.
	\end{proof}
	\begin{proof}[Proof of Theorem \ref{thm-strong-solution}]
		
	``only if'' part: We apply the operator $(-\Delta)^{\alpha}\int_{1}^{2} P_t (\cdot)  dt$ to  each term of (\ref{eq-v-solution}) and obtain
\[(-\Delta)^{\alpha}\int_{1}^{2} P_t (v)  dt= (-\Delta)^{\alpha}\int_{1}^{2} P_t (G^{(\alpha)} (v^q))  dt+ (-\Delta)^{\alpha}\int_{1}^{2} P_t (G^{(\alpha)} (\eta))  dt.\]
By the above discussions, this can be simplified as
\[(-\Delta)^{\alpha}\int_{1}^{2} P_t (v)  dt=\int_{1}^{2} P_t (v^q)  dt+\int_{1}^{2} P_t ( \eta)  dt.\]

Note that
\[\int_{1}^{2} P_t (v^q)  dt\ge \left(\int_{1}^{2} P_t (v)  dt\right)^q.\]
We conclude that the positive function $h=\int_{1}^{2} P_t (v)  dt\in \mathcal{D}((-\Delta)^{\alpha})\cap C^{\infty}(M)$ and satisfies
\[(-\Delta)^{\alpha}h\ge h^q.\]

	``if'' part: Let $h$ be a positive smooth function on $M$ that is in the domain of $(-\Delta)^{\alpha}$ (on $C_{\infty}(M)$) and satisfies
	\[(-\Delta)^{\alpha}h\ge h^q.\]
	It follows that 
	\[G^{(\alpha)}\left((-\Delta)^{\alpha}h\right)\ge G^{(\alpha)}(h^q).\]
	
	Note that both $h$ and $(-\Delta)^{\alpha}h$ are positive. Hence we have \begin{align*}
	G^{(\alpha)}\left((-\Delta)^{\alpha}h\right)&=\lim_{t\rightarrow +\infty}\int_{0}^{t}P_s^{(\alpha)} \left((-\Delta)^{\alpha}h\right) ds\\&=\lim_{t\rightarrow +\infty} \left(h-P_s^{(\alpha)}h\right)\\&\le h,
	\end{align*}
where for the last equality we applied that $h\in \mathcal{D}((-\Delta)^{\alpha})$ (on $C_{\infty}(M)$).
We then obtain
\[h\ge G^{(\alpha)}(h^q),\]
which finishes the proof.
	\end{proof}

\appendix
\section{Some proofs for Section \ref{sect-potential}}\label{appendix-potential}
Here we give the detailed proofs for some results in Section \ref{sect-potential}, for the adaption to the current setting. No originality is claimed.
\begin{proof}[Proof of Lemma \ref{lem-kernel-limit}]
	Let $\nu \in \mathcal{M}^{+}(X)$ be concentrated on a Borel set $A\subset X$.
	
	If $K_{\nu}1\le 1$ in $A$, then for each $n\ge 1$, $K_{n,\nu}1 \le K_{\nu}1\le 1$ in $A$. By the weak maximum principle for $K_n$, we then have
	$K_{n,\nu}1\le b$ in $X$.
	
	By the monotone convergence theorem, $K_{\nu}1 \le b$ in $X$. The assertion follows.
\end{proof}

\begin{proof}[Proof of Lemma \ref{lem-kernel-multiple}]
	We first assume that $\eta$ is bounded.
	Suppose $\nu\in\mathcal{M}^{+}(X)$ is concentrated on a Borel set $A$. Set $\nu'=\eta\nu\in \mathcal{M}^{+}(X)$. Then $\nu' $ is also concentrated on $A$.
	
	If $\tilde{K}_{\nu} 1 \le 1$ in $A$, noticing that $\tilde{K}_{\nu}1 =K_{\nu'}1$, we have
	\[K_{\nu'}1\le 1 \text{~~in~~}A.\]
	Hence by the weak maximum principle for $K$, we obtain
	\[\tilde{K}_{\nu}1 =K_{\nu'}1 \le b \text{~~in~~} X.\]
	
	For a general non-negative measurable function $\eta$, consider $\eta_n=\eta \wedge n$ and set $\tilde{K}_n$ by $\tilde{K}_n(x,y)=K(x,y)\eta_n(y)$ for $n\ge 1$. By the bounded case, for each $n\ge 1$, $\tilde{K}_n$ satisfies the weak maximum principle with constant $b\ge 1$.
	The assertion follows by taking limit as is guaranteed by Lemma \ref{lem-kernel-limit}.
	%
	%
\end{proof}

\begin{proof}[Proof of Corollary \ref{cor-max-f}]
	Let $\tilde{K}(x,y)=K(x,y)f(y)$. By Lemma \ref{lem-kernel-multiple}, $\tilde{K}$ satisfies the weak maximum principle with constant $b$.
	
	Set $\nu'=1_{\{f>0\}}\nu\in  \mathcal{M}^{+}(X)$. Note that \[	K_{\nu}f=\tilde{K}_{\nu} 1=\tilde{K}_{\nu'}1,\]
	and that $\nu'$ is concentrated on $\{f>0\}\cap A$.
	The assertion follows by applying the weak maximum principle to $\tilde{K}$ and $\nu'$.
\end{proof}

\begin{proof}[Proof of Lemma \ref{int-type-ineq}]
	For any $y\in X$, set
	\begin{eqnarray*}
		E_y=\left\{z\in X: K_{\nu}1(z)\leq K_{\nu}1(y)\right\}.
	\end{eqnarray*}
	
	Clearly
	\begin{eqnarray*}
		K_{\nu}1_{E_y}(z)\leq K_{\nu}1(z)\leq K_{\nu}1(y)\quad\mbox{for all $z\in E_y$}.
	\end{eqnarray*}
	By Corollary \ref{cor-max-f}, 
	we obtain
	\begin{eqnarray*}
		K_{\nu}1_{E_y}(z)\leq bK_{\nu}1(y)\quad\mbox{for all $z\in X$}.
	\end{eqnarray*}
	
	Applying Lemma \ref{rearrange} to the $\sigma$-finite measure $\omega=K(x,\cdot)\nu$ and the function
	$f=K_{\nu}1$, and noticing that $\omega(E_y)=K_{\nu}1_{E_y}(x)$, we obtain
	\begin{eqnarray*}
		\int_0^{K_{\nu}1(x)}\varphi(t)dt&\leq& \int_{X}\varphi\left(\omega\left\{z\in X: K_{\nu}1(z)\leq K_{\nu}1(y)\right\}\right)\omega(dy) \\
		&=&\int_{X}\varphi(K_{\nu}1_{E_y}(x))\omega(dy) \\
		&\leq&\int_{X}\varphi(bK_{\nu}1(y))\omega(dy)=K_{\nu}[\varphi(b K_{\nu}1)](x).
	\end{eqnarray*}
	which completes the proof.
\end{proof}
%
\begin{proof}[Proof of Lemma \ref{lem-iter}]
	We argue by induction.
	When $k=0$, $\psi_0(f_0(x))=K_{\nu}1(x)= f_0(x)$ for all $x\in X$.
	
	For $k=1$,\[\psi_1(f_0(x))=\int_0^{K_{\nu}1(x)}\psi(s)ds.\] 
	
	Then (\ref{est-it}) follows from  Lemma \ref{int-type-ineq}
	by replacing $\varphi$ with $\psi$
	\begin{eqnarray}
		\psi_1(f_0(x))=\int_0^{K_{\nu}1(x)}\psi(s)ds\leq K_{\nu}(\psi(b K_{\nu}1))(x)=K_{\nu}(\varphi(K_{\nu}1))(x)=f_1(x),
	\end{eqnarray}
	for each $x\in X$ such that $K(x,\cdot)$ is $\sigma$-finite with respect to $\nu$.
	
	For the induction step, now we assume that the assertion holds for some $k\ge 1$.	
	
	For $y\in X$, set
	\begin{eqnarray*}
		E_y=\{z\in \Omega: f_k(z)\leq f_k(y)\}.
	\end{eqnarray*}
	Then we have by Lemma \ref{lem-handy},
	\begin{eqnarray}\label{G-new}
		K_{\nu}(1_{E_y}\varphi(f_{k-1}))(x)\leq 	b K_{\nu}(\varphi(f_{k-1}))(y)= bf_k(y)\quad\mbox{for all $x, y\in X$}.
	\end{eqnarray}
	Consider now an auxiliary kernel defined by
	\[\hat{K}(x,z)=K(x,z)1_{E_y}(z),\]
	and define a sequence $\{\hat{f}_l\}$ of functions similarly to (\ref{it-f}):
	\begin{eqnarray*}
		\hat{f}_0=\hat{K}_{\nu}1=K_{\nu}1_{E_y},\quad\hat{f}_{l+1}=\hat{K}_{\nu}(\varphi(\hat{f}_l))=K_{\nu}\left(1_{E_y}\varphi(\hat{f}_l)\right).
	\end{eqnarray*}
	It follows from (\ref{G-new}) that
	\begin{eqnarray*}
		\hat{f}_k(x)\le 	K_{\nu}(1_{E_y}\varphi(f_{k-1}))(x)\leq bf_k(y)\quad\mbox{for all $x,y\in X$}.
	\end{eqnarray*}
	By the inductive hypothesis, we have,  for each $x\in X$ such that $\hat{K}(x,\cdot)=K(x,\cdot)1_{E_y}(\cdot)$ is $\sigma$-finite with respect to $\nu$,
	\begin{eqnarray*}
		\psi_k(\hat{f}_0(x))\leq \hat{f}_k(x).
	\end{eqnarray*}
	In particular, the above estimate is true for each $x\in X$ such that $K(x,\cdot)$ is $\sigma$-finite with respect to $\nu$.
	It follows that
	\begin{eqnarray*}
		\psi\circ\psi_k(\hat{f}_0(x))\leq\psi(\hat{f}_k(x))\leq\psi(b f_k(y))=\varphi(f_k(y)),
	\end{eqnarray*}
	that is
	\begin{eqnarray}\label{G-hat-2}
		\psi\circ\psi_k(K_{\nu}1_{E_y})(x)\leq\varphi(f_k(y)),
	\end{eqnarray}
	for each $y\in X$, and each $x\in X$ such that $K(x,\cdot)$ is $\sigma$-finite with respect to $\nu$.
	
	Fix now some $x\in X$ such that $K(x,\cdot)$ is $\sigma$-finite with respect to $\nu$. Apply Lemma \ref{rearrange} with the $\sigma\text{-}$finite measure $\omega=K(x, \cdot)\nu$. We obtain,
	using (\ref{psi-k}) and (\ref{G-hat-2}), that
	\begin{align}
		\psi_{k+1}(f_0(x))&=\int_0^{f_0(x)}\psi\circ\psi_k(s)ds=\int_0^{\omega(X)}\psi\circ\psi_k(s)ds\nonumber\\
		&\leq\int_{X}\psi\circ\psi_k(\omega(z\in X:f_k(z)\leq f_k(y)))\omega(dy)\nonumber\\
		&=\int_{X}\psi\circ\psi_k(K_{\nu}1_{E_y}(x))\omega(dy)\nonumber\\
		&\leq\int_{X}\varphi(f_k(y))\omega(dy)\nonumber\\
		&=K_{\nu}(\varphi(f_k))(x)=f_{k+1}(x),
	\end{align}
	which finishes the inductive step.
\end{proof}

\begin{proof}[Proof of Theorem \ref{thm-iteration}]
	It is clear that for each $x\in A$,  $K(x,\cdot)$ is $\sigma$-finite with respect to $\nu$.
	
	Set $\varphi(t)=g(t+1)$ for $t\ge 0$. Let $\psi(t)=\varphi(b^{-1} t)$. Define $\psi_{k}$ and $f_k$ for $k\in \mathbb{N}$ in the same way as in Lemma \ref{lem-iter}.
	
	We claim that for each $x\in A$,
	\[u(x)\ge f_k(x) +1, \quad \forall k\in \mathbb{N}.\]
	Indeed, by assumption $u(x)\ge 1$ for each $x\in A$. It follows that for each $x\in A$,
	\[u(x)\ge K_{\nu}(g(u))(x)+1\ge K_{\nu}1(x)+1=f_0(x)+1.\]
	Suppose $u\ge f_k+1$ on $A$ is known. Then we have  for each $x\in A$,
	\[u(x)\ge K_{\nu}(g(u))(x)+1\ge K_{\nu}(\varphi(f_k))(x)+1=f_{k+1}(x)+1.\]
	Hence the claim holds.
	
	By Lemma \ref{lem-iter},  for each $x\in A$, $\forall k\in \mathbb{N}$,
	\[u(x)\ge f_k(x) +1\ge \psi_{k}(K_{\nu}1)(x)+1.\]
	
	Note that $\psi \ge 1$ and $\psi$ is non-decreasing. It is then clear that
	\[	\psi_{1}(t)=\int_0^t\psi\circ\psi_0(s)ds=\int_0^t\psi(s)ds\ge t=\psi_0(t).\]	By induction we have
	$\forall k\ge 1$,
	\[\psi_{k+1}(t)=\int_{0}^{t}\psi\circ\psi_k(s)ds\ge \int_0^t\psi\circ\psi_{k-1}(s)ds =\psi_{k}(t).\]
	
	Set $\psi_{\infty}=\lim_{k\rightarrow +\infty}\psi_{k}$. Then $\psi_{\infty}$ is non-decreasing and we have
	\[u\ge \psi_{\infty}(K_{\nu}1)+1\]
	holds on $A$.
	
	Fix $x\in A$. Let $t_0=K_{\nu}1(x)$. Then $t_0<u(x)<+\infty$.
	Without loss of generality, we assume that $t_0>0$. Then $\psi_{\infty}$ is finite on $[0, t_0]$ and is positive on $(0, t_0]$. For each $t\in [0, t_0] $, we have
	\begin{align*}
		\int_0^t\psi\circ\psi_{\infty}(s)ds &=\int_0^t\psi(\lim_{k\rightarrow +\infty}\psi_k(s))ds\\&=\int_0^t\lim_{k\rightarrow +\infty}\psi(\psi_k(s))ds\\&=\lim_{k\rightarrow +\infty}\int_0^t\psi(\psi_k(s))ds\\&=\psi_{\infty}(t).
	\end{align*}
	Hence on $[0, t_0]$,
	\begin{equation}\label{ode-1}
		\frac{d}{d t}\psi_{\infty}(t)=\psi\circ\psi_{\infty}(t).
	\end{equation}
	
	Consider
	\[\Psi(r):=\int_{0}^{r} \frac{ds}{\psi(s)}=\int_{0}^{r} \frac{ds}{g(b^{-1} s +1)}=bF(1+r/b).\]
	Combining with  \eqref{ode-1}, we derive that
	$$\frac{d\Psi(\psi_{\infty}(t))}{dt}=1,$$
	and thus $\Psi(\psi_{\infty}(t))=t$. Hence
	\[t_0=\Psi(\psi_{\infty}(t_0))<\lim_{r\rightarrow +\infty}\Psi(r)=b\int_{1}^{+\infty}
	\frac{d t}{g(t)}.\]
\end{proof}

\begin{proof}[Proof of Theorem \ref{thm-iteration-h}]
	Consider a new kernel $\hat{K}(x,y)=\frac{1}{h(x)}K(x,y) h(y)^q$.  In the same way as in the proof of Theorem \ref{thm-iteration}, we see that $K(x,\cdot)$ is $\sigma$-finite with respect to $\nu$ at each $x\in A$. Since $h$ is positive and bounded, we see that $\hat{K}(x, \cdot)$ is $\sigma$-finite with respect to $\nu$ at each $x\in A$.
	
	Note that $\hat{K}(x,y)=\tilde{K}(x,y)h^{q+1}(y)$. By Lemma \ref{lem-kernel-multiple}, $\hat{K}$ also satisfies the weak maximum principle with constant $b$.
	
	Let $v=\frac{u}{h}$. The inequality
	\[u\ge K_{\nu}(u^q)+h\]
	is equivalent to
	\[v\ge \hat{K}_{\nu}(v^q)+1.\]
	
	By Corollary \ref{coro-iteration-1}, at each $x\in A$,
		\[\hat{K}_{\nu} 1(x)<\frac{b}{q-1},\]
		which is equivalent to
	\[K_{\nu}(h^q)(x)<\frac{b}{q-1} h(x).\]	
\end{proof}
\begin{proof}[Proof of Lemma \ref{lem-Ptolemy}]
	Without loss of generality, we can assume that \[K(x,y)=\max\{K(x,y), K(y,z),K(o,z), K(o,x)\}.\]
	Then by the quasi-metric property,
	\begin{align*}
		\kappa^2 (K(x,z)K(o,y))&\ge (K(x,y)\wedge K(y,z))\cdot (K(x,y)\wedge K(x,o))\\
		&=K(y,z)K(x,o)\\
		&\ge (K(x,y)K(o,z))\wedge(K(y,z)K(o,x)).
	\end{align*}
\end{proof}

\begin{proof}[Proof of Lemma \ref{lem-quasi-weak}]
	For any constant $N\ge 1$, define $K_N(\cdot, \cdot)= K(\cdot, \cdot)\wedge N$.	Now we show that $K_N$ is a quasi-metric kernel. Indeed, symmetry is clear. And we have for all $x,y,z\in X$,
	\begin{align*}
		K_N(x,y)\wedge K_N(y,z)&=\min\{K(x,y), K(y,z), N\}\\
		&=\min\{K(x,y), K(y,z)\}\wedge N\\
		&\le (\kappa K(x,z))\wedge N \\ &\le \kappa (K(x,z)\wedge N)\\
		&=\kappa K_{N}(x,z).
	\end{align*}
	
	Let $\nu\in \mathcal{M}^{+}(X)$ be concentrated on $A$.  Assume that $K_{\nu}1(x)\leq1$ for all $x\in A$, then $K_{N, \nu}1(x)\le 1$
		for all $x\in A$. Note that
			for each $x\in X$,
			\begin{equation}\label{eq-supp}
				K_{N, \nu}1(x)=\int_X K_N(x,y)d\nu(y)=\int_A K_N(x,y)d\nu(y).
	\end{equation}
	Fix an arbitrary $x\in A^c$ and let $0<\varepsilon<1$. Choose $y_{\varepsilon}\in A$ satisfying
	
	\[K_N(x, y_{\varepsilon})> (1-\varepsilon) \sup_{A} K_N(x, \cdot).\]
	Note that for each $y\in A$, $K_N(x,y)<\frac{1}{1-\varepsilon} K_N(x, y_{\varepsilon})$.
	Then by (\ref{eq-supp}), we obtain
		\begin{align*}
			K_{N, \nu}1(x)&=\int_A K_N(x,y)d\nu(y)\\
			&=\int_A \min\{ \frac{1}{1-\varepsilon} K_N(x, y_{\varepsilon}), K_N(x,y)\}d\nu(y)\\
			&\le \frac{1}{1-\varepsilon}\int_A \min\{ K_N(x, y_{\varepsilon}), K_N(x,y)\}d\nu(y)\\
			&\le  \frac{1}{1-\varepsilon}\int_A \kappa K_N(y_{\varepsilon},y)d\nu(y)\\
			&\le  \frac{\kappa}{1-\varepsilon}.
		\end{align*}
		Since $0<\varepsilon<1$ is arbitrary, we have
		\[K_{N, \nu}1(x)\le \kappa.\]
	
	By monotone convergence,  for each $x\in A^c$,
	{	\[K_{\nu}1(x)=\lim_{N\rightarrow +\infty} K_{N, \nu}1(x)\le \kappa.\]}
\end{proof}

\begin{proof}[Proof of Lemma \ref{lem-quasi-m}]
	The inequality (\ref{eq-tilde-K}) amounts to
	\[\min\{K(x,y)k(z), K(y,z)k(x)\}\le \kappa^2 K(x,z)k(y).\]
	
	By symmetry, it suffices to consider two cases:
	that
	\[K(x,y)=\max\{K(x,y), K(y,z),K(o,z), K(o,x)\}, \tag{1}\]
	or that
	\[K(o,x)=\max\{K(x,y), K(y,z),K(o,z), K(o,x)\}. \tag{2}\]
	
	For case (1), we have
	\begin{align*}
		\kappa^2 K(x,z) k(y)&\ge  \min\{K(x,y), K(y,z)\}\cdot ((\kappa K(o,y)) \wedge (\kappa c))\\
		&\ge  \min\{K(x,y), K(y,z)\}\cdot  \min\{K(o,x), K(x,y), c\}\\
		&=K(y,z)\cdot (K(o,x)\wedge c)\\
		&=K(y,z)k(x).
	\end{align*}
	
	Similarly, for case (2), we have
	\begin{align*}
		\kappa^2 K(x,z)k(y) &= 	\kappa K(x,z) \cdot  ((\kappa K(o,y)) \wedge (\kappa c))\\
		&\ge  	\kappa K(x,z) \cdot  \min\{K(o,x), K(x,y), c\}\\
		&= 	\kappa K(x,z)\cdot (K(x,y)\wedge c).
	\end{align*}
	Then we proceed by dividing it into two further cases.
	If $K(x,y)\le c$,  the above inequality leads to
	\begin{align*}
		\kappa^2 K(x,z) k(y)&\ge	\kappa K(x,z)\cdot (K(x,y)\wedge c)\\&\ge (K(o, x)\wedge K(o,z)) \cdot K(x,y)\\&
		=K(o,z) K(x,y)\\&\ge K(x,y)k(z).
	\end{align*}
	Otherwise $K(x,y)>c$, and we have
	\begin{align*}
		\kappa^2 K(x,z)k(y)&\ge	\kappa K(x,z)\cdot (K(x,y)\wedge c)\\
		&= c 	\kappa K(x,z)\\ &\ge c (K(x,y)\wedge K(y,z))\\ &\ge \min\{K(x,y)k(z), K(y,z)k(x)\}.
	\end{align*}
	
\end{proof}
\begin{remark}\rm
	Observe that for fixed $x,y,z\in X$, the inequality (\ref{eq-tilde-K}) only involves the restriction of $K$ on the set $\{o, x,y,z\}$. We can consider a modified kernel $\hat{K}$ on the space $\hat{X}=\{o, x,y,z\}$: $\hat{K}(u,v)=K(u, v)$ for $u, v\in \hat{X}\backslash\{o\}$, and $\hat{K}(u,v)=K(u, v)\wedge c$ if $u=o$ or $v=o$. Then $\hat{K}$ is a quasi-metric kernel on $\hat{X}$ and we can simply apply the Ptolemy inequality (\ref{eq-Ptolemy})  to  $\hat{K}$.
	
\end{remark}

\section{A brief summary of Dirichlet form theory}\label{appendix-DF}



Dirichlet form theory can be viewed as an organic generalization of the $L^2$-aspects of elliptic/parabolic PDEs, with deep connections to potential theory and Markov processes. For example, weak solutions can be formulated in terms of Dirichlet forms in a natural way. Here we present a minimal summary of basic notions and facts needed in Section \ref{sect-solutions} and Section \ref{sect-strong-solutions}. We refer to the standard references \cite{FOT, Chen-Fuku} for comprehensive accounts.

	Let $(X, d)$ be a locally compact separable metric space and $\mu$ be a Radon measure on $X$ with full support. A regular Dirichlet form $(\mathcal{E}, \mathcal{F})$ on $(X, d, \mu)$ consists of a dense subspace $\mathcal{F}$ of $L^2(X, \mu)$ and a non-negative definite symmetric bilinear form $\mathcal{E}: \mathcal{F}\times \mathcal{F}\rightarrow \mathbb{R}$ such that the following conditions hold:
	\begin{itemize}
		\item  $\mathcal{F}$ is complete with respect to the inner product $\mathcal{E}_1(\cdot, \cdot)\defeq\mathcal{E}(\cdot, \cdot)+(\cdot, \cdot)_{L^2(X,\mu)}$;
		\item $\mathcal{F}\cap C_c(X)$ is dense in both $\mathcal{F}$ (with $\mathcal{E}^{1/2}_1$-norm) and $C_c(X)$ (with $\sup$-norm);
		\item if $u\in \mathcal{F}$, then $ v\defeq(0\vee u)\wedge 1 \in \mathcal{F}$, and $\mathcal{E}(v, v)\le \mathcal{E}(u, u)$.
	\end{itemize}
A classical example of a Dirichlet form is the Sobolev space $W^{1,2}(\mathbb{R}^n)$ with the Dirichlet energy form on it:
\[\mathcal{E}(u, v)=\int_{\mathbb{R}^n}\nabla u\cdot\nabla v d\mu, ~~u, v\in \mathcal{F}=W^{1,2}(\mathbb{R}^n).\]

Let $\mathcal{D}$ be the space of  functions $u\in \mathcal{F}$ such that for some constant $C\ge 0$, $\forall v\in \mathcal{F}, ~\mathcal{E}(u, v)\le C\norm{v}_{L^2(X,\mu)}$. By the Riesz representation theorem, there exists a non-negative definite self-adjoint operator $L$ with domain $\mathcal{D}$ such that
for all $u\in \mathcal{D}$, and for all $v\in \mathcal{F}$
\[\mathcal{E}(u, v)=(Lu, v)_{L^2(X,\mu)}.\]
The operator $L$ is called the generator of the Dirichlet form $(\mathcal{E},\mathcal{F})$. For the above example of classical Dirichlet energy form on $\mathbb{R}^n$, the corresponding generator is $(L,  \mathcal{D})=(-\Delta, W^{2,2}(\mathbb{R}^n))$, where $-\Delta$ is the (non-negative signed) Laplace operator.

The operator $L$ naturally generates a strongly continuous contraction semigroup $\{T_t\}_{t\ge 0}$ on $L^2(X, \mu)$ where $T_t= \exp (-t L)$ is understood in the sense of spectral calculus. The semigroup  $\{T_t\}_{t\ge 0}$ is Markovian, that is, $0\le T_t u\le 1$ for each $u\in L^2(X, \mu)$ with $0\le u\le 1$. The Dirichlet form, its generator and the corresponding semigroup in fact determine each other uniquely, see  \cite{FOT, Chen-Fuku} for more details.

To allow more general weak type solutions, it is convenient to adopt the notion of extended Dirichlet space.	Given a regular Dirichlet form $(\mathcal{E}, \mathcal{F})$, a measurable real-valued function $u$  is said to belong to the extended Dirichlet space $\mathcal{F}_e$ if there exists an $
	\mathcal{E}$-Cauchy sequence $\{u_k\}_{k\ge 1}$, called an approximating sequence, such that $u_k\rightarrow u$ $\mu$-a.e. as $k\rightarrow\infty$. Clearly $\mathcal{F}_e$ is a linear space containing $\mathcal{F}$. The bilinear form $\mathcal{E}$ can be naturally extended to $\mathcal{F}_e$ by setting $\mathcal{E}(u, u)=\lim_{k\rightarrow \infty}\mathcal{E}(u_k, u_k)$ which is independent of the choice of the approximating sequence $\{u_k\}_{k\ge 1}$. In the Euclidean space setting, extended Dirichlet spaces are closely related to homogeneous Sobolev spaces and Riesz potential spaces. 

We are only concerned with irreducible Dirichlet forms, i.e. those with a certain ergodic property(see \cite[Definition 2.1.1]{Chen-Fuku}). There is a fundamental dichotomy, transience v.s. recurrence, for the long time asymptotic behaviour of an irreducible Dirichlet form. For $t>0$, consider  an operator $S_t$ defined as
\[S_t f = \int_{0}^{t} T_s f d s,\]
first for functions  $f\in L^1(X, \mu)\cap L^2(X, \mu)$ and then for functions belonging to  $L^1(X, \mu)$ by extension. Denote by $L^1_+(X,\mu)$ the set of non-negative functions in $L^1(X, \mu)$. By monotonicity, for  $f\in L^1_+(X, \mu)$,
the potential operator $Sf=\lim\limits_{N\rightarrow +\infty} S_N f$ can be defined. In fact, $Sf$ makes sense for each non-negative measurable function $f$ (see  \cite[Section 1.5]{FOT}). 

The Dirichlet form $(\mathcal{E},\mathcal{F})$
	(equivalently, the semigroup  $\{T_t\}_{t\ge 0}$, or the generator $L$) is said to be transient if for each $f\in L^1_+(X,\mu)$, $Sf(x)<+\infty$ for $\mu$-a.e. $x\in X$. An equivalent formulation is that the extended Dirichlet space $\mathcal{F}_e$ is a Hilbert space with respect to $\mathcal{E}(\cdot, \cdot)$(see \cite[Theorem 1.6.3]{FOT}). It is recurrent if for each $f\in L^1_+(X,\mu)$, $Sf(x)=0$ or $+\infty$ for $\mu$-a.e. $x\in X$.

For irreducible Dirichlet forms (see  \cite[Section 1.6]{FOT}), the transience property admits convenient equivalent criteria. The following fact is contained  in \cite[Lemma 1.6.3 and Lemma 1.6.4]{FOT}.

\begin{proposition}\label{prop-trans}\rm
		Let $(\mathcal{E},\mathcal{F})$ be a regular Dirichlet form. Suppose that it is irreducible. Then  $(\mathcal{E},\mathcal{F})$ is transient if and only if one of the following equivalent conditions holds.
		\begin{enumerate}
			\item for some $f\in L^1_+(X,\mu)$ with $\mu\left(\{x\in M: f(x)>0\}\right)>0$, $Sf(x)<+\infty$ for $\mu$-a.e. $x\in X$;
			\item for some non-negative measurable function $f$ with $\mu\left(\{x\in M: f(x)>0\}\right)>0$, $Sf(x)<+\infty$ for $\mu$-a.e. $x\in X$.
		\end{enumerate}
\end{proposition}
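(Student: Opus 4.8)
The plan is to establish the cycle of implications \emph{transience} $\Rightarrow$ (1) $\Rightarrow$ (2) $\Rightarrow$ \emph{transience}. The first two are essentially free. For \emph{transience} $\Rightarrow$ (1) one only needs a single $f\in L^1_+(X,\mu)$ with $\mu(\{f>0\})>0$: since $X$ is $\sigma$-compact and $\mu$ has full support, any compact set carries finite measure and at least one carries positive measure, so its indicator works. For (1) $\Rightarrow$ (2) there is nothing to prove, as every element of $L^1_+(X,\mu)$ is in particular a non-negative measurable function. Hence the content lies entirely in (2) $\Rightarrow$ \emph{transience}.

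First I would reduce the general measurable test function appearing in (2) to an integrable one. Given a non-negative measurable $f$ with $\mu(\{f>0\})>0$ and $Sf<+\infty$ $\mu$-a.e., write $\{f>0\}=\bigcup_{n\ge 1}\{f>1/n\}$; then $\mu(\{f>1/n\})>0$ for some $n$, and by inner regularity of the Radon measure $\mu$ on the $\sigma$-compact space $X$ there is a compact $A\subseteq\{f>1/n\}$ with $0<\mu(A)<+\infty$. Since $\mathbf{1}_A\le n f$, monotonicity of $S$ gives $S\mathbf{1}_A\le n\,Sf<+\infty$ $\mu$-a.e., and $g\defeq\mathbf{1}_A\in L^1_+(X,\mu)$ with $\mu(\{g>0\})>0$. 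So it suffices to prove: if some $g\in L^1_+(X,\mu)$ with $\mu(\{g>0\})>0$ satisfies $Sg<+\infty$ $\mu$-a.e., then $(\mathcal{E},\mathcal{F})$ is transient.

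For this last step I would invoke the transient--recurrent dichotomy for irreducible Dirichlet forms, contained in \cite[Lemma 1.6.3 and Lemma 1.6.4]{FOT}: an irreducible regular Dirichlet form is either transient or recurrent, and in the recurrent case every $h\in L^1_+(X,\mu)$ with $\mu(\{h>0\})>0$ satisfies $Sh=+\infty$ $\mu$-a.e. Since our $g\in L^1_+(X,\mu)$ has $\mu(\{g>0\})>0$ but $Sg<+\infty$ $\mu$-a.e., the form cannot be recurrent, hence it is transient. This proves (2) $\Rightarrow$ \emph{transience} and closes the cycle, and the ``trivial'' direction \emph{transience} $\Rightarrow$ (2) is subsumed in \emph{transience} $\Rightarrow$ (1) $\Rightarrow$ (2).

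The main obstacle is not a computation but a citation one: the whole argument rests on the structure theory in \cite{FOT} — the transient--recurrent dichotomy for irreducible forms and the description of recurrence via potentials of $L^1_+$ functions — while the only genuinely new content is the routine reduction of a general measurable test function to an indicator $\mathbf{1}_A$, using nothing beyond $\sigma$-compactness and inner regularity. Should one want a self-contained treatment, one would reproduce the excessive-function argument from \cite{FOT}: $u\defeq Sg$ satisfies $T_t u\le u$ for all $t\ge 0$ and $u<+\infty$ $\mu$-a.e., and the delicate point is propagating this a.e.-finiteness to $Sh$ for an arbitrary $h\in L^1_+(X,\mu)$ by means of irreducibility.
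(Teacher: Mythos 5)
Your proposal is correct and in substance coincides with the paper, which gives no argument of its own: it simply states that the proposition is contained in \cite[Lemma 1.6.3 and Lemma 1.6.4]{FOT}, i.e.\ the transience--recurrence dichotomy for irreducible Dirichlet forms that you invoke. The only content you add — reducing the general non-negative measurable $f$ of (2) to an indicator $\mathbf{1}_A\le n f$ of a compact set $A\subseteq\{f>1/n\}$ via $\sigma$-compactness and inner regularity, so that monotonicity of $S$ brings you back to the $L^1_+$ case covered by \cite{FOT} — is precisely the routine bridging the paper leaves implicit, and it is sound.
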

\begin{remark}\rm
In the cases we are concerned with, the heat semigroup $\{P_t\}_{t\ge 0}$ on a connected complete Riemannian manifold is always irreducible, as can be seen from the positivity of the heat kernel. As shown in \cite[Theorem 3.1]{Okura}, the fractional semigroups $\{P_t^{(\alpha)}\}_{t\ge 0}$ obtained from the subordination procedure is irreducible as well.
\end{remark}
The framework of  a transient regular Dirichlet form accommodates a natural treatment of weak type super-harmonic and harmonic functions with respect to the generator $L$. 

A non-negative measurable function is said to be of finite energy if  $\int_{X} f Sf d\mu <+\infty$. The following theorem is important when discussing solutions in the $\mathcal{F}_e$-sense in Section \ref{sect-solutions}.
\begin{theorem}\cite[Theorem 1.5.4]{FOT} \label{thm-df}\rm
	Let $(\mathcal{E}, \mathcal{F})$ be a transient regular Dirichlet form with the extended Dirichlet space $\mathcal{F}_e$. Let $f$ be a non-negative measurable function of finite energy. Then the following hold
	\begin{itemize}
		\item $Gf\in \mathcal{F}_e$;
		\item $f v\in L^1(X,\mu)$ for each $v\in \mathcal{F}_e$;
		\item $\mathcal{E}(Gf, v)=\int_{X} f v d \mu$  for each $v\in \mathcal{F}_e$.
	\end{itemize}
	
\end{theorem}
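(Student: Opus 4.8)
The plan is to prove the three statements first for a dense class of ``test'' functions and then reach the general case by monotone approximation from below. Throughout write $S_Nf=\int_0^N T_sf\,ds$, so that $Gf=\lim_{N\to\infty}S_Nf$ pointwise for $f\ge0$, and recall that ``finite energy'' means $\int_X f\,Gf\,d\mu<+\infty$. Call $h\ge0$ \emph{admissible} if it is bounded with relatively compact support; since $\mu$ is Radon, then $h\in L^1(X,\mu)\cap L^2(X,\mu)$, hence $S_Nh\in\mathcal F$ for each $N$, and Lemma~\ref{lem-generator}, applied to the strongly continuous semigroup $\{T_t\}_{t\ge0}$ (whose generator is $-L$), gives $L\,S_Nh=h-T_Nh$; by definition of the generator $L$ of the form this yields
\[\mathcal E(S_Nh,v)=(h,v)_{L^2(X,\mu)}-(T_Nh,v)_{L^2(X,\mu)},\qquad v\in\mathcal F.\]
Taking $v=S_Nh$ and using $T_sh\ge0$ would then give $\mathcal E(S_Nh,S_Nh)\le(h,S_Nh)\le\int_X h\,Gh\,d\mu<+\infty$, so $\{S_Nh\}_N$ is $\mathcal E$-bounded, while monotone convergence gives $S_Nh\uparrow Gh$ pointwise, with $Gh<+\infty$ $\mu$-a.e. by transience.

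The analytic heart --- and the step I expect to be the main obstacle --- is to prove that $\{S_Nh\}_N$ is $\mathcal E$-Cauchy. My approach would be to expand $\mathcal E(S_Mh-S_Nh,\,S_Mh-S_Nh)$ by means of the displayed identity and the symmetry of $\mathcal E$, obtaining a fixed finite combination of the quantities $(h,S_Mh-S_Nh)$, $(T_Mh,S_Mh)$, $(T_Mh,S_Nh)$ and $(T_Nh,S_Nh)$; a short computation (for instance $(T_Nh,S_Nh)=\int_N^{2N}(h,T_rh)\,dr$ and $(T_Mh,S_Nh)=\int_M^{M+N}(h,T_rh)\,dr$) identifies each of these with a piece $\int_a^b(h,T_rh)\,dr$ whose lower endpoint $a\to+\infty$ as $M,N\to+\infty$, and since $\int_0^\infty(h,T_rh)\,dr=\int_X h\,Gh\,d\mu<+\infty$ by Tonelli, all of them tend to $0$. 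Once $\{S_Nh\}_N$ is $\mathcal E$-Cauchy, the fact that $S_Nh\in\mathcal F$ and $S_Nh\to Gh$ $\mu$-a.e. makes $\{S_Nh\}$ an approximating sequence for $Gh$, so $Gh\in\mathcal F_e$ and $S_Nh\to Gh$ in the $\mathcal E$-metric. I would then let $N\to\infty$ in the displayed identity --- using $(T_Nh,v)=(T_{N/2}h,T_{N/2}v)$ together with the fact that $r\mapsto(h,T_rh)=\norm{T_{r/2}h}_{L^2}^2$ is non-increasing and integrable in $r$, so $\norm{T_{N/2}h}_{L^2}\to0$ --- to obtain $\mathcal E(Gh,v)=\int_X hv\,d\mu$ for every $v\in\mathcal F$; in particular $\mathcal E(Gh,Gh)=\int_X h\,Gh\,d\mu$. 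Using that $\mathcal F$ is $\mathcal E$-dense in $\mathcal F_e$ together with the reference-function estimate $\int_X|w|\,g\,d\mu\le\mathcal E(w,w)^{1/2}$ valid on $\mathcal F_e$ for transient forms (\cite[Theorem 1.5.1]{FOT}), I would extend $\mathcal E(Gh,v)=\int_X hv\,d\mu$ to all $v\in\mathcal F_e$; then $\int_X h|v|\,d\mu=\mathcal E(Gh,|v|)\le\big(\int_X h\,Gh\,d\mu\big)^{1/2}\mathcal E(v,v)^{1/2}<+\infty$ shows $hv\in L^1(X,\mu)$, and in particular (taking $v=Gh'$ with $h'$ admissible) $\mathcal E(Gh,Gh')=\int_X h\,Gh'\,d\mu$. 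This settles all three claims for admissible $h$.

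For a general nonnegative $f$ of finite energy, I would pick relatively compact open sets $U_n\uparrow X$ and set $f_n=(f\wedge n)\mathbf{1}_{U_n}$, so that each $f_n$ is admissible, $f_n\uparrow f$, $Gf_n\uparrow Gf$ pointwise, and $\int_X f_n\,Gf_n\,d\mu\uparrow\Lambda:=\int_X f\,Gf\,d\mu<+\infty$ by monotone convergence. By the admissible case, $\mathcal E(Gf_n,Gf_m)=\int_X f_n\,Gf_m\,d\mu=\int_X f_m\,Gf_n\,d\mu$, so that for $n\ge m$
\[\mathcal E(Gf_n-Gf_m,\,Gf_n-Gf_m)=\int_X f_n\,Gf_n\,d\mu+\int_X f_m\,Gf_m\,d\mu-2\int_X f_m\,Gf_n\,d\mu,\]
and since $\int_X f_n\,Gf_n\,d\mu\to\Lambda$, $\int_X f_m\,Gf_m\,d\mu\to\Lambda$, and $\int_X f_m\,Gf_m\,d\mu\le\int_X f_m\,Gf_n\,d\mu\le\int_X f_m\,Gf\,d\mu\le\Lambda$, the middle integral also tends to $\Lambda$; hence $\{Gf_n\}_n$ is $\mathcal E$-Cauchy. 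As $Gf_n\uparrow Gf$ $\mu$-a.e. and $(\mathcal F_e,\mathcal E)$ is complete in the transient case (\cite[Theorem 1.6.3]{FOT}), it follows that $Gf\in\mathcal F_e$ and $Gf_n\to Gf$ in the $\mathcal E$-metric. Finally, for any $v\in\mathcal F_e$, $\int_X f|v|\,d\mu=\lim_n\int_X f_n|v|\,d\mu=\lim_n\mathcal E(Gf_n,|v|)\le\Lambda^{1/2}\mathcal E(v,v)^{1/2}<+\infty$, so $fv\in L^1(X,\mu)$, and dominated convergence (with dominating function $f|v|$) gives $\mathcal E(Gf,v)=\lim_n\mathcal E(Gf_n,v)=\lim_n\int_X f_nv\,d\mu=\int_X fv\,d\mu$, completing the proof of all three assertions.
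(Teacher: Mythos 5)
The paper offers no internal proof of this statement: it is quoted directly from \cite[Theorem 1.5.4]{FOT}, so the only meaningful comparison is with the standard argument there, and your proposal is in substance a correct reconstruction of it. The main steps are sound: for bounded, compactly supported $h$ you use the semigroup identity $\mathcal{E}(S_Nh,v)=(h,v)_{L^2}-(T_Nh,v)_{L^2}$ (Lemma \ref{lem-generator}), show that all the cross terms in $\mathcal{E}(S_Mh-S_Nh,S_Mh-S_Nh)$ are tails of the convergent integral $\int_0^\infty(h,T_rh)\,dr$, pass to the limit using $\norm{T_{N/2}h}_{L^2}\to 0$, and then run the monotone approximation $f_n=(f\wedge n)\mathbf{1}_{U_n}\uparrow f$ with the parallelogram-type computation and the completeness of $(\mathcal{F}_e,\mathcal{E})$ in the transient case; this is exactly the right architecture. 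Two small points of wording deserve tightening. First, your ``admissible'' class should explicitly carry the finite-energy hypothesis $\int_X h\,Gh\,d\mu<\infty$ (it is not automatic for every bounded compactly supported $h$), though this costs nothing since your approximants satisfy $f_n\le f$ and hence $\int_X f_n\,Gf_n\,d\mu\le\int_X f\,Gf\,d\mu<\infty$. Second, in extending $\mathcal{E}(Gh,v)=\int_X hv\,d\mu$ from $v\in\mathcal{F}$ to $v\in\mathcal{F}_e$, the reference-function estimate with $g$ does not by itself control $\int_X h\lvert v_k-v\rvert\,d\mu$ along an approximating sequence; the estimate that does the work is $\int_X h\lvert w\rvert\,d\mu=\mathcal{E}(Gh,\lvert w\rvert)\le\mathcal{E}(Gh,Gh)^{1/2}\mathcal{E}(w,w)^{1/2}$ for $w\in\mathcal{F}$ (the identity on $\mathcal{F}$ combined with the normal contraction), which makes the approximating sequence Cauchy in $L^1(h\,d\mu)$ and identifies its limit as $v$ via a.e.\ convergence. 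Since this is precisely the inequality you write immediately afterwards, the fix is a reordering of the argument rather than a missing idea; with these adjustments the proof is complete.
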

A  Radon measure  $\nu$ on $X$ is said to be of finite $0$-order energy integral if for some $C>0$, $\forall f\in \mathcal{F}\cap C_c(M)$, $\int_{M}|f|d \nu \le \sqrt{\mathcal{E}(f, f)}$ (see \cite[Section 2.2]{FOT}). The following can be viewed as an abstract criterion of weak type super-harmonic functions.
\begin{proposition}\cite[Lemma 2.2.10]{FOT} \label{prop-weak}\rm	Let $(\mathcal{E}, \mathcal{F})$ be a transient regular Dirichlet form with the extended Dirichlet space $\mathcal{F}_e$.
The following are equivalent for $v\in \mathcal{F}_e$:
\begin{enumerate}
\item there is a Radon measure  $\nu$ on $X$ of finite $0$-order energy integral, such that for each $\varphi\in \mathcal{F}_e$ with a quasi-continuous version $\tilde{\varphi}$,
\[\mathcal{E}(v, \varphi)=\int_M \tilde{\varphi} d \nu;\]
\item for each $\varphi\in \mathcal{F}_e$ with $\varphi\ge 0$ $\mu$-a.e., 
\[\mathcal{E}(v, \varphi)\ge 0;\]
\item for each $\varphi\in \mathcal{F}\cap C_c(X)$ with $\varphi\ge 0$, 
\[\mathcal{E}(v, \varphi)\ge 0.\]
\end{enumerate}
\end{proposition}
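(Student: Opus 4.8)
The plan is to prove the implications $(1)\Rightarrow(2)\Rightarrow(3)\Rightarrow(1)$; only the last requires real work, and it is an application of the Riesz representation theorem together with the potential theory of the transient form recalled in this appendix. For $(1)\Rightarrow(2)$: assume $\mathcal{E}(v,\varphi)=\int_X\tilde\varphi\,d\nu$ for all $\varphi\in\mathcal{F}_e$, where $\nu$ is a Radon measure of finite $0$-order energy integral, and let $\varphi\in\mathcal{F}_e$ with $\varphi\ge 0$ $\mu$-a.e. Since $\tilde\varphi=\varphi\ge 0$ $\mu$-a.e., the quasi-continuous functions $\tilde\varphi$ and $\tilde\varphi\vee 0$ agree $\mu$-a.e., hence agree quasi-everywhere; as a measure of finite $0$-order energy integral charges no set of zero capacity, $\tilde\varphi\ge 0$ holds $\nu$-a.e., so $\mathcal{E}(v,\varphi)=\int_X\tilde\varphi\,d\nu\ge 0$. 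The implication $(2)\Rightarrow(3)$ is immediate, since $\mathcal{F}\cap C_c(X)\subset\mathcal{F}\subset\mathcal{F}_e$, every $\varphi\in C_c(X)$ is its own quasi-continuous version, and $\varphi\ge 0$ everywhere forces $\varphi\ge 0$ $\mu$-a.e.

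For $(3)\Rightarrow(1)$, consider $\ell(\varphi)\defeq\mathcal{E}(v,\varphi)$ on $\mathcal{F}\cap C_c(X)$, which by hypothesis satisfies $\ell(\varphi)\ge 0$ whenever $\varphi\ge 0$. The first step is a local sup-norm bound: given a relatively compact open $G\subset X$, regularity furnishes $\chi\in\mathcal{F}\cap C_c(X)$ with $0\le\chi\le 1$ and $\chi\equiv 1$ on $\overline{G}$; then for $\varphi\in\mathcal{F}\cap C_c(X)$ supported in $G$ one has $|\varphi|\le\norm{\varphi}_{\sup}\chi$ pointwise, so $\norm{\varphi}_{\sup}\chi\pm\varphi\ge 0$ and hence $|\ell(\varphi)|\le\norm{\varphi}_{\sup}\,\mathcal{E}(v,\chi)$. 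Since $\mathcal{F}\cap C_c(X)$ is $\sup$-norm dense in $C_c(X)$, $\ell$ extends — after exhausting $X$ by relatively compact open sets and a routine consistency check — to a non-negative linear functional on $C_c(X)$, and the Riesz representation theorem produces a Radon measure $\nu$ with $\mathcal{E}(v,\varphi)=\int_X\varphi\,d\nu$ for all $\varphi\in\mathcal{F}\cap C_c(X)$. The second step is that $\nu$ has finite $0$-order energy integral: for such $\varphi$ the functions $\varphi^{+},\varphi^{-}$ lie in $\mathcal{F}\cap C_c(X)$ with $\mathcal{E}(\varphi^{\pm},\varphi^{\pm})\le\mathcal{E}(\varphi,\varphi)$ by the Markovian property, so $|\varphi|=\varphi^{+}+\varphi^{-}$ gives $\int_X|\varphi|\,d\nu=\mathcal{E}(v,|\varphi|)\le 2\,\mathcal{E}(v,v)^{1/2}\,\mathcal{E}(\varphi,\varphi)^{1/2}$.

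For the last step I would invoke the potential $U\nu\in\mathcal{F}_e$ of the measure $\nu$ of finite $0$-order energy integral — the extension of Theorem \ref{thm-df} from $\nu=f\mu$ to general such measures, cf. \cite{FOT} — which satisfies $\mathcal{E}(U\nu,\varphi)=\int_X\tilde\varphi\,d\nu$ for every $\varphi\in\mathcal{F}_e$. Then $\mathcal{E}(v-U\nu,\varphi)=0$ for all $\varphi\in\mathcal{F}\cap C_c(X)$; as the form is transient, $\mathcal{F}_e$ is a Hilbert space under $\mathcal{E}$ in which $\mathcal{F}$, hence $\mathcal{F}\cap C_c(X)$, is dense, so $v=U\nu$ in $\mathcal{F}_e$ and therefore $\mathcal{E}(v,\varphi)=\int_X\tilde\varphi\,d\nu$ for all $\varphi\in\mathcal{F}_e$, which is $(1)$. (Alternatively, one checks directly that $\varphi\mapsto\int_X\tilde\varphi\,d\nu$ and $\varphi\mapsto\mathcal{E}(v,\varphi)$ are both $\mathcal{E}$-continuous on $\mathcal{F}_e$ and agree on the dense subspace $\mathcal{F}\cap C_c(X)$.)

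The main obstacle — essentially the only non-bookkeeping point — is this passage from a representation tested against genuine functions of $\mathcal{F}\cap C_c(X)$ to one tested against quasi-continuous representatives of arbitrary elements of $\mathcal{F}_e$: it rests on the facts that the Radon measure produced by the Riesz step charges no polar set and that its energy estimate propagates to the extended Dirichlet space — equivalently, that $\nu$ genuinely belongs to the class $S_0$ and admits a potential $U\nu$ — all of which are standard in Dirichlet form theory but carry the capacity-theoretic weight of the argument. By contrast, the hypothesis $(3)$ is used only through the positivity of $\ell$, which is precisely what makes the Riesz representation available.
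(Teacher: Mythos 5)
The paper gives no proof of this proposition at all: it is quoted verbatim as \cite[Lemma 2.2.10]{FOT}, so there is nothing internal to compare against. Your argument is correct and is essentially the standard proof from Fukushima--Oshima--Takeda: $(1)\Rightarrow(2)$ via the fact that a quasi-continuous version nonnegative $\mu$-a.e.\ is nonnegative q.e.\ and that measures of finite $0$-order energy integral charge no polar sets; $(2)\Rightarrow(3)$ trivially; and $(3)\Rightarrow(1)$ by turning $\varphi\mapsto\mathcal{E}(v,\varphi)$ into a positive functional on $C_c(X)$ (regularity plus cut-offs), applying Riesz representation, checking the finite $0$-order energy bound through normal contractions and Cauchy--Schwarz, and then identifying $v$ with the $0$-order potential $U\nu$ by $\mathcal{E}$-density of $\mathcal{F}\cap C_c(X)$ in the Hilbert space $(\mathcal{F}_e,\mathcal{E})$. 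The only point worth flagging is that your last step outsources the heart of the matter to the existence of $U\nu\in\mathcal{F}_e$ with $\mathcal{E}(U\nu,\varphi)=\int\tilde\varphi\,d\nu$ for all $\varphi\in\mathcal{F}_e$ (equivalently, that the energy bound on the core propagates to $\mathcal{F}_e$ via q.e.-convergent subsequences and Fatou); this is legitimate, as it is prior machinery in \cite{FOT} rather than the lemma being proved, but it is exactly where the capacity-theoretic content sits, as you yourself note.
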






\textbf{Acknowledgements.} 
Prof. I. Verbitsky and Prof. A. Grigor'yan patiently answered many of our questions during the preparation of this work. We are grateful to their valuable help. Y. Sun would like to thank Prof. J. Xiao for helpful discussions and Prof. D. Ye for suggesting the possible application to the Hardy-H\'{e}non type equation (\ref{eq-Henon}). The literature \cite{BQ} is pointed out to us by Prof. X. Tang and Prof. Y. Li, to whom we are also grateful.


\end{document}